\renewcommand\baselinestretch{1.2}
\newtheorem{proposition}{Proposition}[section]
\newtheorem{lemma}{Lemma}[section]
\newtheorem{theorem}{Theorem}[section]
\numberwithin{equation}{section}\allowdisplaybreaks
\def\le{\leqslant}
\def\ge{\geqslant}
\def\leq{\leqslant}
\def\geq{\geqslant}
\def\no{\nonumber}
\def\pend{\hfill $\Box$}
\def\Real{{\mathbb{R}}}
\def\NN{{\mathbb{N}}}
\def\FF{{\mathscr{F}^{-1}}}
\def\F{{\mathscr{F}}}
\newcommand{\norm}[1]{\left\|#1\right\|}
\newcommand{\abs}[1]{\left|#1\right|}
\newcommand{\set}[1]{\left\{#1\right\}}
\newcommand{\wuhao}{\fontsize{9pt}{\baselineskip}\selectfont}
\renewcommand{\baselinestretch}{1.2}
\begin{document}

\title{
	Scattering for the magnetic  Zakharov  system in 3 dimensions  }
\author
         {
            {Xiaohong Wang},\ \ \     Lijia Han\footnote{ E--mail address:
 hljmath@ncepu.edu.cn (L. Han)}\\
   {\small Department of Mathematics and Physics, North China Electric Power University, Beijing 102206, China}\\
         \date{}
         }
\maketitle

\begin{minipage}{13.5cm}
\footnotesize \bf Abstract. \rm  \quad We consider the global existence and scattering for solutions of magnetic Zakharov system in  three-dimensional space. When the  initial datas are small,  we prove the existence of smooth global solutions and scattering results, by combining the space--time resonance method, weighted Sobolev space and dispersive estimates.  Moreover,  the decay rates for the solutions are also obtained.

\vspace{10pt}

\bf 2000 Mathematics Subject Classifications. \rm  35 P 25, 35 Q 55, 35 Q 60, 35 L 70.\\

\bf Key words and phrases. \rm Magnetic Zakharov equation; Global existence; Scattering;

\end{minipage}

%
%
%
%
%

\section{Introduction}

\setcounter{section}{1}\setcounter{equation}{0} In this paper, we
study the Cauchy problem for the magnetic Zakharov system (MZ)
given by
\begin{equation}\label{1.1}
	\left\{\!\!
	\begin{array}{lc}
		i\mathcal{E}_{t}+\Delta \mathcal{E}-n\mathcal{E}+i \mathcal{E}\times \mathcal{B}=0, &\\
		\alpha^{-2}n_{tt}-\Delta n=\Delta |\mathcal{E}|^{2},&\\
		\beta^{-2}\mathcal{B}_{tt}+(\Delta
		^{2}-\Delta)\mathcal{B}=i\Delta\nabla\times(\nabla\times(\mathcal{E}\times
		\overline{\mathcal{E}})).\\
	\end{array}
	\right.
\end{equation}
with initial datas
\begin{align}
 \mathcal{E}(0, x)=\mathcal{E}_0, \quad (n(0, x),n_t(0, x))=(n_0, n_1), \quad (\mathcal{B}(0, x),
\mathcal{B}_t(0, x))= (\mathcal{B}_0, \mathcal{B}_1),\nonumber
\end{align}
 where, $\alpha>0$ denotes the ion sound speed, and $\beta>0$ denotes the speed of the
 electron. The function
$\mathcal{E}=\mathcal{E}(t,x):\mathbb{R}^{1+3}\rightarrow \mathbb{C}^{3}$ is the
slowly varying amplitude of the high-frequency electric field, the
function $n=n(t,x):\mathbb{R}^{1+3}\rightarrow \mathbb{R}$ is the
fluctuation of the ion-density from its equilibrium, and
$\mathcal{B}:\mathbb{R}^{1+3}\rightarrow \mathbb{R}^{3}$ is the
self-generated magnetic field. $\bar{\mathcal{E}}$ denotes the
conjugate complex of $\mathcal{E}=\mathcal{E}(t,x)$, and the notation $"\times"$ is
the cross product of $\mathbb{R}^{3}$ or $\mathbb{C}^{3}$ valued
vectors. In the case $ \nabla\times\mathcal{E}=0$, it is easy to see $\nabla\times(\nabla\times(\mathcal{E}\times
\overline{\mathcal{E}}))= -\Delta (\mathcal{E}\times
\overline{\mathcal{E}})$.

 Magnetic Zakharov equation is a system coupled by Zakharov equation and the equation describing the self generated magnetic field. The self generated magnetic field
was first found in 1971 in studying the laser plasma (see \cite{4}). Magnetic Zakharov system was first proposed in  \cite{H} when studying the self generated magnetic effect in plasma, which described the gravity force and magnetic field generation effect caused by the nonlinear interaction between plasma wave and particles. In recent
years, with the development of laser plasma, astronomy plasma and the strong turbulence theory related
to them, the self-generated magnetic effect and the equations which describe it received more and more
attention from researchers.

Magnetic Zakharov system has great relationship with classical Zakharov system. When $\beta \rightarrow \infty$, solution of equation \eqref{1.1} will converge to that of generalized Zakharov  equation:
\begin{equation}\label{1.2}
	\left\{\!\!
	\begin{array}{lc}
		i\mathcal{E}_{t}+\Delta \mathcal{E}-n\mathcal{E}+\mathcal{E}\times(I-\Delta)^{-1} \nabla\times(\nabla\times(\mathcal{E}\times
		\overline{\mathcal{E}}))=0, &\\
		\alpha^{-2}n_{tt}-\Delta n=\Delta |\mathcal{E}|^{2}.\\
	\end{array}
	\right.
\end{equation}
When $(\alpha, \beta) \rightarrow \infty$ together, solutions of equation \eqref{1.1} will further converge to that of the nonlinear Schr$\ddot{o}$dinger equation with magnetic effects (MNLS) \eqref{1.3}:
\begin{equation}\label{1.3}
		i\mathcal{E}_{t}+\Delta \mathcal{E}+|\mathcal{E}|^2\mathcal{E}+\mathcal{E}\times(I-\Delta)^{-1} \nabla\times(\nabla\times(\mathcal{E}\times
		\overline{\mathcal{E}}))=0.
\end{equation}

During the past few years,  systems \eqref{1.1}--\eqref{1.3}  have been widely studied by many researchers in mathematics. Focus on  magnetic Zakharov system \eqref{1.1}, in \cite{ZGG},  the local well-posedness results  were obtained. In \cite{BJ}, the low regularity solutions were obtained. The  blow-up results about \eqref{1.1} were considered in \cite{ZBD,ZBLJ,ZYT3}. For the limit behavior results of \eqref{1.1},   \cite{HZGG} proved that the solutions of magnetic Zakharov system converged to those of generalized Zakharov system in Sobolev space $H^s$, $s > 3/2$, when parameter $\beta \rightarrow \infty$; when
$(\alpha, \beta)\rightarrow \infty$ together, they further proved that the solutions of magnetic Zakharov system converged to those of Schr$\ddot{o}$dinger
equation with magnetic effect in  $H^s$, $s > 3/2$.   For other results about magnetic Zakharov equation, we can refer to \cite{ZHG,HZGG2,KHP,WG,WG2,YZZ} and the references therein.

There are many results about the classical Zakharov system,
\begin{equation}\label{1.2-1}
	\left\{\!\!
	\begin{array}{lc}
		i\mathcal{E}_{t}+\Delta \mathcal{E}-n\mathcal{E}=0, &\\
		\alpha^{-2}n_{tt}-\Delta n=\Delta |\mathcal{E}|^{2}.\\
	\end{array}
	\right.
\end{equation}
The wellposedness, low regularity, blow up and limit behavior results were systematically studied in  \cite{JJ,JTG,DHS,M,MN2,OT1,OT2} and the references therein.  For the scattering results, in \cite{ZK}, the authors proved small energy scattering for the 3D Zakharov system under radial symmetric assumption. In order to delete the radial symmetric condition, another way is the application of
 "space--time resonance" method and weighted Sobolev space.
"Space--time resonance" method was developed in  \cite{PNJ1,PNJ2} and further applied to many other dispersive PDEs \cite{PN,G,PNJ3,PNJ4,KP,ZFJ}, such as Klein--Gorden equation \cite{G}, Zakharov equation \cite{ZFJ}, Euler--Maxwell equation \cite{PN,GAB} and so on. In \cite{ZFJ}, the authors utilized  the space--time resonance method  as  general framework  to obtain global existence and scattering results for small localized solutions  in 3 space dimension for Zakharov equation. \cite{GBK2} even showed us a more general frame for this kind of method.

As far as we know, there are no scattering results about the magnetic Zakharov system.
Inspired by \cite{ZFJ}, the  main purpose of this paper is to prove global existence and scattering for solutions of magnetic Zakharov system
in 3 space dimension.  Let
$$
N_{\pm}:= i|\alpha\nabla|^{-1}n_{t}\pm n,\ \
M_{\pm}:= i\beta^{-1}(\Delta^{2}-\Delta)^{-1/2}\mathcal{B}_{t}\pm \mathcal{B}.
$$
 Then, system \eqref{1.1} is reduced to
\begin{equation}\label{3.1}
	\begin{split}
		\left\{\!\!
		\begin{array}{lc}
			i\mathcal{E}_{t}+ \Delta \mathcal{E}=\frac{1}{2}(N_{+}-N_{-})\mathcal{E}+\frac{i}{2} \mathcal{E}\times(M_{+}-M_{-}),
&\\ i\dot{N_{\pm}}\mp |\alpha\nabla|N_{\pm}=|\alpha\nabla| |\mathcal{E}|^{2},&\\
			i\dot{M_{\pm}}\mp \beta(\Delta^{2}-\Delta)^{1/2}M_{\pm}=-i\beta(-\Delta)^{1/2}(I-\Delta)^{-1/2}\Delta(\mathcal{E}\times
		\overline{\mathcal{E}}).\\
		\end{array}
		\right.
	\end{split}
\end{equation}
Next, we will consider the system \eqref{3.1}. We mainly apply  the "space--time resonance" method, combining the weighted Sobolev space
and dispersive estimates, see also in \cite{ZFJ}. In magnetic Zakharov system, some new "space--time resonance" will appear, see \eqref{4.11}--\eqref{4.14}.  For the case $\beta\neq1$, we use frequency decomposition and other techniques to treat it, see section \ref{sec1.1}.  However, when $\beta=1$, the phase $\phi(\xi,\eta)$  don{'}t have good lower bound, so  we need to find other methods to study. Since $\alpha$ does{'}t effect the estimate, so we could assume $\alpha=1$ without loss of generility. We also show that the decay rate for magnetic component $\mathcal{B}$ is  $t^{-1}$, the same as $ n$ which  decays at a rate of $t^{-1}$, and the Schr$\ddot{o}$dinger component $\mathcal{E}$  decays almost at a rate of $t^{-7/6}$.

Notations:\\
$\bullet\quad$Japanese brackets: ${\left\langle x \right\rangle}=\sqrt{1+x^2}$.\\
$\bullet\quad$If $f$ is a function over $R^{3}$ then its Fourier transform, denoted by $\widehat f$ or $\mathcal{F}f$, is given by
\begin{align*}
	\widehat{f}(\xi)=\mathcal{F}f(\xi)=\frac{1}{2\pi}\int_{R^{3}} e^{-ix\xi}f(x)dx.
\end{align*}
$\bullet\quad$Inhomogeneous Sobolev spaces $H^{N}$ is given by the norm $\|f\|_{H^{N}}=\|(1+\wedge)^{N} f\|_{L^{2}}$.\\
Inhomogeneous Sobolev spaces $W^{s,p}$ is given by the norm  $\|f\|_{W^{s,p}}=\|(1+\wedge)^{s} f\|_{L^{p}}$.\\
 We use $R$ to denote indistinctly any one of the components of the vector of Riesz transforms $R=\frac{\nabla}{\wedge}$, where $\wedge$: =$|\nabla|=\sqrt{-\Delta}$.\\
 $\bullet\quad$ Littlewood--Paley theory and Bernstein{'}s inequality.( \cite{HZGG})\\\label{1.11}
Let $\psi\in
C_0^\infty(\mathbb{R}^{d})$ be an even, radial function with the
properties $0\leq \psi\leq 1$, $\psi(\xi)=1$ for $|\xi|\leq 1$ and
$\psi(\xi)=0$ for $|\xi|\geq 2$. Then we write $\phi_0 =\psi$ and
$\phi_j(\xi)=\psi(\frac{\xi}{2^j})-\psi(\frac{\xi}{2^{j-1}})$ for $j
\geq 1$. In this way we have $1=\sum_{j\geq 0}\phi_j$. Define dyadic
frequency localization operators $P_j$ by
$$ \mathscr{F}_x(P_j f)(\xi)=
\phi_j(|\xi|)\mathscr{F}_xf(\xi),
$$
so, all these operators are bounded on $L^{p}$ spaces:
	\begin{align*}
if\quad 1< p< \infty,\quad\|P_{j}f\|_{L^{p}}\lesssim\|f\|_{L^{P}},\quad\|P_{< j}f\|_{L^{p}}\lesssim\|f\|_{L^{P}}.
\end{align*}
Thus, Bernstein{'}s inequality:
	\begin{align*}
	if\quad 1\le p\le q \le \infty,\quad\|P_{j}f\|_{L^{q}}\le2^{3j(\frac{1}{q}-\frac{1}{p})}\|P_{j}f\|_{L^{p}},\quad\|P_{< j}f\|_{L^{q}}\le2^{3j(\frac{1}{q}-\frac{1}{p})}\|P_{< j}f\|_{L^{p}}.
\end{align*}
 $\bullet\quad$The Besov space $B^{s}_{p,r}(\mathbb{R}^{d})$, $1\leq p,r <\infty$, denotes the completion of the Schwartz function
space $\mathscr{S}(\mathbb{R}^{d})$ with
 respect to the norm
$$
\|f\|_{B^{s}_{p,r}}:=\Big(\sum_{j\geq 0}2^{j sr}\|P_j
f\|_{L^{p}}^{r}\Big)^{1/r}.
$$
 $\bullet\quad$${A}\lesssim{B}$ if ${A}\le{CB}$ for some implicit constant $C$, the value of $C$ may change from line to line.\\
\subsection{Main results}
The main results in this paper are the  following theorem.
\begin{theorem}\label{thm1.1}
When $\beta\neq1$, suppose that the initial datas $\mathcal{E}_{0}$, $n_{0}$, $n_{1}$, $\mathcal{B}_{0}$, $\mathcal{B}_{1}$ satisfy
\begin{align}
		&\quad\quad\quad\quad\quad\quad\quad\quad\quad\quad\quad\quad\quad\quad\|\mathcal{E}_{0}\|_{H^{N+1}}+\|{\left\langle x \right\rangle}^{2}\mathcal{E}_{0}\|_{L^{2}}\le \epsilon_{0},\\
		&\|\wedge n_{0}\|_{H^{N-1}}+\|n_{1}\|_{H^{N-1}}+\|\wedge^{2} n_{0}\|_{B_{1,1}^{0}}+\|\wedge n_{1}\|_{B_{1,1}^{0}}+\|{\left\langle x \right\rangle}n_{0}\|_{H^{1}}+\|{\left\langle x \right\rangle}^{2}n_{1}\|_{H^{1}}\le \epsilon_{0},\\
		&\| \mathcal{B}_{0}\|_{H^{N-1}}+\|\mathcal{B}_{1}\|_{H^{N-3}}+\|\mathcal{B}_{1}\|_{L^{1}}+\|{\left\langle x \right\rangle} \mathcal{B}_{0}\|_{H^{N-1}}+\|{\left\langle x \right\rangle} \mathcal{B}_{1}\|_{H^{N-1}}+\|{\left\langle x \right\rangle}^{2} \mathcal{B}_{0}\|_{H^{1}}+\|{\left\langle x \right\rangle}^{2} \mathcal{B}_{1}\|_{H^{1}}\le \epsilon_{0},\label{1.7}
\end{align}
for some small $\epsilon_{0}$ and some large integer $N$.  Then the Cauchy problem for the magnetic Zakharov
system \eqref{1.1} admits a unique global solution such that
\begin{align*}
	&\|\mathcal{E}(t)\|_{L^{\infty}}\lesssim\frac{\epsilon_{0}}{t^{7/6-}},\quad\quad\|\mathcal{E}(t)\|_{H^{N+1}}\lesssim t^{\delta},	\quad\quad\|x\mathcal{E}(t)\|_{L^{2}}\lesssim t^{\delta},\quad\quad\|x^2\mathcal{E}(t)\|_{L^{2}}\lesssim t^{1-2\alpha-\delta},\\
&\|n(t)\|_{L^{\infty}}\lesssim\frac{\epsilon_{0}}{t} ,\quad\quad\quad\|n(t)\|_{H^{N}}\lesssim \epsilon_{0} ,\quad\quad\quad\|xn(t)\|_{L^{2}}\lesssim\epsilon_{0} ,\quad\quad\|\wedge x^2n(t)\|_{L^{2}}\lesssim t^{1-3\alpha},\\
&\|\mathcal{B}(t)\|_{L^{\infty}}\lesssim\frac{\epsilon_{0}}{t},\quad\quad\quad\|\mathcal{B}(t)\|_{H^{N-1}}\lesssim \epsilon_{0},\quad\quad\|x\mathcal{B}(t)\|_{L^{2}}\lesssim\epsilon_{0},\quad\quad\|\wedge x^2\mathcal{B}(t)\|_{L^{2}}\lesssim t^{1-3\alpha}.
\end{align*}
where the parameters could be chosen as
\begin{align}\label{1.9}
\alpha=\frac{1}{6}-2\delta, \quad\beta=1-3\alpha, \quad\frac{5}{N}\lesssim\delta, \quad\forall\delta \ll 1.
\end{align}

Furthermore, let $(\mathcal{E},n,\mathcal{B})(t)$ are the solution of \eqref{1.1}, then the solution scatters as $t\rightarrow \pm\infty$.   More precisely, for the solutions $(\mathcal{E}_{+}, n_{\pm}, \mathcal{B}_{\pm})$, where $\mathcal{E}_{+}$  satisfies $i\dot{\mathcal{E}_{+}}+ \Delta \mathcal{E}_{+}=0$, $n_{\pm}$ satisfy $ i\dot{n_{\pm}}\mp |\alpha\nabla|n_{\pm}=0$, and $\mathcal{B}_{\pm}$ satisfy $
			i\dot{\mathcal{B}_{\pm}}\mp \beta(\Delta^{2}-\Delta)^{1/2}\mathcal{B}_{\pm}=0$, then we have
\begin{align}
&	\|\mathcal{E}(t)-\mathcal{E}_{+}(t)\|_{H^{N+1}}+
\|n(t)- n_{\pm}(t)\|_{H^{N}}+
\|\mathcal{B}(t)-\mathcal{B}_{\pm}(t)\|_{H^{N-1}}\rightarrow0,\quad as \quad t\rightarrow \pm\infty.
\end{align}
\end{theorem}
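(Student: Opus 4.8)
\medskip
\noindent\emph{Proof strategy.} My plan is to run a bootstrap argument on the diagonalized system \eqref{3.1}. First I would pass to the nonlinear profiles
$$f=e^{-it\Delta}\mathcal E,\qquad g_\pm=e^{\pm it\wedge}N_\pm,\qquad h_\pm=e^{\pm it\beta(\Delta^2-\Delta)^{1/2}}M_\pm,$$
so that Duhamel's formula writes each component as its free evolution plus a time integral of bilinear terms whose Fourier kernels carry the phases
\begin{align*}
\Phi_{\mathcal E}^{N,\pm}(\xi,\eta)&=|\xi|^2-|\xi-\eta|^2\mp|\eta|, & \Phi_{\mathcal E}^{M,\pm}(\xi,\eta)&=|\xi|^2-|\xi-\eta|^2\mp\beta|\eta|\langle\eta\rangle,\\
\Phi_{N}^{\pm}(\xi,\eta)&=\pm|\xi|-|\eta|^2+|\xi-\eta|^2, & \Phi_{M}^{\pm}(\xi,\eta)&=\pm\beta|\xi|\langle\xi\rangle-|\eta|^2+|\xi-\eta|^2,
\end{align*}
where $\langle\zeta\rangle:=(1+|\zeta|^2)^{1/2}$; the four phases carrying the magnetic dispersion $\beta|\cdot|\langle\cdot\rangle$ are the new space--time resonances \eqref{4.11}--\eqref{4.14}, absent from the classical Zakharov analysis of \cite{ZFJ}. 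For each I would compute the time-resonant set $\{\Phi=0\}$, the space-resonant set $\{\nabla_\eta\Phi=0\}$ (equal to $\{\xi=0\}$ for the $N$- and $M$-equations, and a thin $|\xi|$-controlled set for $\Phi_{\mathcal E}^{M,\pm}$), and their intersection.

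Next I would assemble the linear dispersive estimates. The Schrödinger group $e^{it\Delta}$ gives the usual $|t|^{-3/2}$ decay and $e^{\pm it\wedge}$ gives $|t|^{-1}$ after derivative losses; the one genuinely new piece is the propagator $e^{\pm it\beta(\Delta^2-\Delta)^{1/2}}$, whose symbol $\beta|\xi|\langle\xi\rangle$ is radial, vanishes only at $\xi=0$, behaves like $\beta|\xi|$ at infinity and has nondegenerate Hessian off the origin, so a stationary-phase computation should give $\|e^{\pm it\beta(\Delta^2-\Delta)^{1/2}}P_j u\|_{L^\infty}\lesssim 2^{Cj}\langle t\rangle^{-1}\|u\|_{L^1}$ — this is what forces $\mathcal B$ to decay like $n$, at the rate $t^{-1}$. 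From these together with the weighted bounds I would extract the pointwise decay of each component; e.g. $\|\mathcal E\|_{L^\infty}\lesssim t^{-3/2}\|f\|_{L^1}\lesssim t^{-3/2}\|xf\|_{L^2}^{1/2}\|x^2 f\|_{L^2}^{1/2}\lesssim t^{-3/2}\,t^{\delta/2}\,t^{(1-2\alpha-\delta)/2}=t^{-7/6+}$, which is where the exponent $7/6$ and the parameter relations \eqref{1.9} come from.

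Then I would fix the bootstrap space $X$ as the combination of the high-regularity bounds ($\|\mathcal E\|_{H^{N+1}}\lesssim t^\delta$, $\|N_\pm\|_{H^N}\lesssim\epsilon_0$, $\|M_\pm\|_{H^{N-1}}\lesssim\epsilon_0$), the weighted norms of the profiles ($\|\wedge x f\|_{L^2}$, $\|x^2 f\|_{L^2}$, and the analogues for $g_\pm,h_\pm$), and the decay rates they imply; assuming these hold with a slightly enlarged constant on $[0,T]$, I would improve them. The core is the bilinear estimates: for each interaction I would split the $\eta$-integral into a neighbourhood of the space--time resonant set and its complement. Off $\{\Phi=0\}$ I would integrate by parts in $s$ using $e^{is\Phi}=(i\Phi)^{-1}\partial_s e^{is\Phi}$; for $\beta\ne1$ the phases should admit a quantitative lower bound off a set of codimension $\ge1$ — this is the only place $\beta\ne1$ enters, and it is precisely the frequency-decomposition bookkeeping of section \ref{sec1.1} — trading the time integral for boundary terms and terms with an extra $\partial_s$ on a profile, all absorbed by the decay and weighted bounds. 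Near $\{\nabla_\eta\Phi=0\}$ I would integrate by parts in $\eta$, or estimate directly by Hölder using the smallness of the region and the $L^\infty$ decay. The weighted estimates run the same way after commuting $x\leftrightarrow i\nabla_\xi$ through the Duhamel integral; the hard part here is that $\nabla_\xi$, or $\nabla_\xi^2$ for the $x^2$ norms, may land on the free oscillation $e^{is|\xi|^2}$ and generate a factor $s$ (resp. $s^2$) that must be beaten by the other factor's decay plus the non-resonance — which is exactly why $\|x^2\mathcal E\|_{L^2}$ is only expected to be $\lesssim t^{1-2\alpha-\delta}$. Taking $\epsilon_0$ small should improve all constants, so the bootstrap set is open and closed and $T=\infty$.

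Finally, for scattering: once the decay rates are in hand, each bilinear term is time-integrable in the relevant topology ($H^{N+1}$ for $\partial_t f$, $H^N$ for $\partial_t g_\pm$, $H^{N-1}$ for $\partial_t h_\pm$), so the profiles are Cauchy as $t\to\pm\infty$; their limits $f^\infty,g_\pm^\infty,h_\pm^\infty$ give the scattering data, and $\mathcal E_+=e^{it\Delta}f^\infty$, $n_\pm=\mathrm{Re}\,e^{\mp it\wedge}g_\pm^\infty$, $\mathcal B_\pm=\mathrm{Re}\,e^{\mp it\beta(\Delta^2-\Delta)^{1/2}}h_\pm^\infty$ after undoing the linear flows. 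I expect the main obstacle to be the borderline first equation of \eqref{3.1}: both the $n\mathcal E$ interaction (as in classical Zakharov) and the new $\mathcal E\times M_\pm$ interaction, with phases \eqref{4.11}--\eqref{4.14}, have non-empty space--time resonant sets from which one can only barely extract enough decay — this is the source of the "almost $t^{-7/6}$" rate for $\mathcal E$ and of the $\delta$-losses everywhere — and controlling the $x^2\mathcal E$ weighted norm against these resonances will be the single most delicate estimate.
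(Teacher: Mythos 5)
Your overall architecture matches the paper: pass to profiles $f,g_\pm,h_\pm$, write the Duhamel integrals, identify the four new phases \eqref{4.11}--\eqref{4.14}, build a resolution space $X$ combining energy, weighted, and decay norms, close by a bootstrap, and locate the sole use of $\beta\neq1$ in the lower bound $|\rho_\pm|\gtrsim|\eta|^2$ on the $X_2$ region of the $F_2$ energy estimate (Section~\ref{sec1.1}). Your derivation of the $t^{-7/6+}$ decay for $\mathcal E$ from $\|f\|_{L^1}\lesssim\|xf\|_{L^2}^{1/2}\|x^2f\|_{L^2}^{1/2}$ and the parameter relations \eqref{1.9} is also exactly the paper's.

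There is, however, a genuine gap in the dispersive estimate for the new propagator, and it is not cosmetic. You write that the symbol $\beta|\xi|\langle\xi\rangle$ ``behaves like $\beta|\xi|$ at infinity'' and therefore expect a wave-type bound
$\|e^{\pm it\beta(\Delta^2-\Delta)^{1/2}}P_j u\|_{L^\infty}\lesssim 2^{Cj}\langle t\rangle^{-1}\|u\|_{L^1}$.
The asymptotics are the other way around: $p(r)=r\sqrt{1+r^2}\sim r$ near $0$ (wave-like) and $\sim r^2$ for $r\to\infty$ (Schr\"odinger-like), so the high-frequency regime actually gains a full $t^{-3/2}$. The paper's Lemma~\ref{lem7.1}, built on the Guo--Peng--Wang classification of radial symbols in \cite{ZLB}, gives the sharp
$\|e^{itp(|\nabla|)}H\|_{L^\infty}\lesssim t^{-3/2}\|\wedge^{1/2}H\|_{L^1}$,
and Proposition~\ref{prop7.2} then shows $\|\wedge^{1/2}H\|_{L^1}\lesssim t^{1/2}$, yielding the target $t^{-1}$ decay for $\mathcal B$. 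This chain does not survive the downgrade to $t^{-1}$ dispersive decay: the profile $H$ is a time integral of a bilinear term, and its $L^1$ norm grows in $t$ (the paper's bound is $t^{1/2}$, not $O(1)$), so a $t^{-1}$ propagator estimate would give at best $t^{-1/2}$ decay for $\mathcal B$, which is too slow to close the $\|F_2\|$ and $\|x^kF_2\|$ bilinear estimates in which $\|e^{\mp it\beta(\Delta^2-\Delta)^{1/2}}h\|_{L^\infty}\lesssim t^{-1}$ is used as input. So the $t^{-3/2}$ estimate with the $\wedge^{1/2}$ loss is a necessary ingredient, not an optional improvement, and your sketch as written would not close the bootstrap for the magnetic component.

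A smaller discrepancy: the scattering statement in the paper undoes the factorization and compares $\mathcal E,n,\mathcal B$ directly to free evolutions in $H^{N+1},H^N,H^{N-1}$; you compare profiles and then take real parts, which is equivalent in spirit but you should check that the regularity in which $\partial_t g_\pm$ and $\partial_t h_\pm$ are time-integrable is $H^N$ and $H^{N-1}$ respectively, as the paper records, rather than the full top-order spaces.
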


%
%
%
%
%

\section{Preliminaries}
\setcounter{section}{2}\setcounter{equation}{0}

\begin{lemma}\label{lem2.3} $($Hardy--Littlewood--Sobolev theorem$)$ \\
Let  $s$  be a real number, with $0 < s < n$. The Riesz potential operator of order $s$ is $	I_s=(\Delta)^{-\frac{s}{2}}$, and let $1 < p < q < \infty$  satisfy $	\frac{1}{p}=\frac{1}{q}+\frac{s}{n}$, then there exist constants $C(n,s, p)$ such that for all $f$ in $\mathcal{F}(R^n)$  we
have
\begin{align*}
	\|I_s(f)\|_{L^q}\le C(n,s, p)	\|	f\|_{L^P}.
\end{align*}
\end{lemma}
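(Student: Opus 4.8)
\textbf{Proof strategy for Lemma \ref{lem2.3}.}

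The plan is to prove the Hardy--Littlewood--Sobolev inequality for the Riesz potential $I_s = (-\Delta)^{-s/2}$, which up to a dimensional constant is convolution with the kernel $|x|^{-(n-s)}$. I would first record the kernel representation
\begin{align*}
I_s f(x) = c_{n,s}\int_{\mathbb{R}^n}\frac{f(y)}{|x-y|^{n-s}}\,dy,
\end{align*}
so that the claim reduces to showing that convolution with $|x|^{-(n-s)}$ maps $L^p$ to $L^q$ whenever $1<p<q<\infty$ and $\tfrac1p = \tfrac1q + \tfrac sn$. The cleanest route avoids the sharp constant and instead proves the (non-sharp) bound via a splitting argument, which is the standard real-variable approach.

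The key steps, in order: (i) By scaling and the relation $\tfrac1p - \tfrac1q = \tfrac sn$, it suffices to handle a normalized $f$, say $\|f\|_{L^p}=1$. (ii) For a threshold $\rho>0$ to be optimized, split the kernel as $K = K\mathbf 1_{|x|\le\rho} + K\mathbf 1_{|x|>\rho}$ with $K(x)=|x|^{-(n-s)}$. The near part $K\mathbf 1_{|x|\le\rho}$ lies in $L^1$ with $\|K\mathbf 1_{|x|\le\rho}\|_{L^1}\lesssim \rho^{s}$, so by Young's inequality its contribution to $I_sf(x)$ is $\lesssim \rho^{s}\, (Mf)(x)$ — more precisely one bounds it pointwise by the Hardy--Littlewood maximal function $Mf(x)$ times $\rho^s$, via dyadic annuli. (iii) The far part $K\mathbf 1_{|x|>\rho}$ lies in $L^{p'}$ (since $(n-s)p' > n$ exactly when $p < n/s$, which holds here), with $\|K\mathbf 1_{|x|>\rho}\|_{L^{p'}}\lesssim \rho^{s-n/p}$, so by Hölder its contribution is $\lesssim \rho^{s-n/p}\,\|f\|_{L^p} = \rho^{s-n/p}$. (iv) Combining, $|I_sf(x)| \lesssim \rho^s Mf(x) + \rho^{s-n/p}$ for every $\rho>0$; optimizing in $\rho$ (choose $\rho$ so the two terms balance, i.e. $\rho \sim (Mf(x))^{-p/n}$) gives the pointwise bound $|I_sf(x)| \lesssim (Mf(x))^{p/q}$, using $s - (s-n/p)\cdot\tfrac{p/n}{1} $ simplifies to the exponent $p/q$ after inserting $\tfrac1q = \tfrac1p - \tfrac sn$. (v) Finally raise to the $q$-th power and integrate: $\|I_sf\|_{L^q}^q \lesssim \int (Mf)^{p}\,dx = \|Mf\|_{L^p}^p \lesssim \|f\|_{L^p}^p = 1$, where the last inequality is the Hardy--Littlewood maximal theorem, valid since $1<p<\infty$. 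Undoing the normalization yields $\|I_sf\|_{L^q}\lesssim \|f\|_{L^p}$ with a constant depending only on $n,s,p$.

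The main obstacle — really the only delicate point — is step (iii): one must verify that the far tail of the kernel is genuinely in $L^{p'}$, which is precisely where the hypothesis $p<q<\infty$ (equivalently $1<p<n/s$) is used, and that the power of $\rho$ there is negative so the optimization in step (iv) is meaningful. One should also note the endpoint cases $p=1$ or $q=\infty$ are excluded exactly because the maximal theorem fails at $p=1$ and the far-tail argument degenerates; since the lemma assumes $1<p<q<\infty$, these are not needed. Everything else is bookkeeping with the scaling exponents, and the Hardy--Littlewood maximal inequality is invoked as a black box.
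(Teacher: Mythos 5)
The paper does not give a proof of Lemma~\ref{lem2.3}; it is quoted as a classical result (the Hardy--Littlewood--Sobolev / Sobolev embedding for the Riesz potential) to be used as a black box in Section~8. Your argument is a correct and standard proof of that theorem by Hedberg's trick: splitting the kernel $|x|^{-(n-s)}$ at scale $\rho$, bounding the near part pointwise by $\rho^s\,Mf(x)$ via dyadic annuli, putting the far tail in $L^{p'}$ (which requires $(n-s)p'>n$, equivalent to $p<n/s$ and hence automatic from $1/p=1/q+s/n$ with $q<\infty$), optimizing over $\rho$ to obtain $|I_sf(x)|\lesssim (Mf(x))^{p/q}\|f\|_{L^p}^{1-p/q}$, and closing with the Hardy--Littlewood maximal theorem for $1<p<\infty$. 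The exponent bookkeeping ($1-sp/n=p/q$) is correct. Since the paper supplies no competing proof, there is nothing to contrast; your approach is the canonical real-variable one and is fine.
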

\begin{lemma}\label{lem2.4} {\bf\cite{GHZ}}\\
Assume $f$ is a scalar function, $V$ is a vector-valued function (or scalar function)
and $M(\xi_1, \xi_2)$ is a matrix symbol (or scalar symbol). Define the bilinear operator
\begin{align*}
	O[f,M] g=\int_{R^{3}} e^{ix(\xi_1+\xi_2)}m(\xi_1, \xi_2)\widehat f(\xi_1)\widehat g(\xi_2)d\xi_1d\xi_2,
\end{align*}
then we have
\begin{align*}
\|O[f,M] g\|_{L^{2}}\lesssim \|m(\xi, \eta-\xi)\|_{L_{\eta}^{\infty}{H_{\xi}^{1}}}\|f \|_{L^{\infty}}\| g\|_{L^{2}}.
\end{align*}
\end{lemma}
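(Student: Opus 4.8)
Here is how I would attack the bilinear multiplier estimate of this lemma.

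\medskip

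\noindent The plan is the classical ``symbol--to--kernel'' argument: pass to the Fourier variables, apply Plancherel, and realize $O[f,M]g$ as a superposition of pointwise products of $g$ with \emph{translates} of $f$, the weights being the inverse Fourier transform (in the first frequency) of the multiplier. Since the vector $V$ appearing in the statement does not enter the definition of $O[f,M]g$, and the matrix/vector-valued case reduces componentwise to the scalar one, I treat $m$ as a scalar symbol (by density I may even assume $m$ is Schwartz, recovering the general case at the end). Taking $\mathcal F_{x}$ of $O[f,M]g$ and substituting $\xi_{1}=\xi$, $\xi_{2}=\eta-\xi$ gives $\widehat{O[f,M]g}(\eta)=c\int_{\mathbb{R}^{3}}m(\xi,\eta-\xi)\widehat f(\xi)\widehat g(\eta-\xi)\,d\xi$, so by Plancherel it suffices to bound this in $L^{2}_{\eta}$. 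Put $\mu_{\eta}(\xi):=m(\xi,\eta-\xi)$, viewed as a family of functions of $\xi$ indexed by the output frequency $\eta$; the hypothesis reads exactly $\sup_{\eta}\norm{\mu_{\eta}}_{H^{1}_{\xi}}<\infty$. Writing $\mu_{\eta}(\xi)=\int\check\mu_{\eta}(y)e^{iy\cdot\xi}\,dy$, inserting this, and interchanging the integrations, the inner $\xi$-integral becomes $\int e^{iy\cdot\xi}\widehat f(\xi)\widehat g(\eta-\xi)\,d\xi=c\,\widehat{(\tau_{y}f)\,g}(\eta)$, where $\tau_{y}f:=f(\cdot+y)$, whence $\widehat{O[f,M]g}(\eta)=c\int_{\mathbb{R}^{3}}\check\mu_{\eta}(y)\,\widehat{(\tau_{y}f)\,g}(\eta)\,dy$.

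\medskip

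\noindent Next I take the $L^{2}_{\eta}$ norm, apply Minkowski's integral inequality in $y$, and then Plancherel together with Hölder on the inner factor, using $\norm{\tau_{y}f}_{L^{\infty}}=\norm{f}_{L^{\infty}}$, to get $\norm{O[f,M]g}_{L^{2}}\lesssim\bigl(\int_{\mathbb{R}^{3}}\sup_{\eta}\abs{\check\mu_{\eta}(y)}\,dy\bigr)\norm{f}_{L^{\infty}}\norm{g}_{L^{2}}$. It then remains to bound the kernel integral by the claimed symbol norm: for each fixed $\eta$, Cauchy--Schwarz with a polynomial weight and Plancherel give $\norm{\check\mu_{\eta}}_{L^{1}_{y}}\lesssim\norm{\mu_{\eta}}_{H^{1}_{\xi}}$ (the Sobolev embedding of $H^{1}$ into the Wiener algebra $\mathcal F L^{1}$, carried out after a Littlewood--Paley decomposition in $\xi$), and taking the supremum over $\eta$ yields $\int_{\mathbb{R}^{3}}\sup_{\eta}\abs{\check\mu_{\eta}(y)}\,dy\lesssim\norm{m(\xi,\eta-\xi)}_{L^{\infty}_{\eta}H^{1}_{\xi}}$. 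Combining the two displays closes the estimate with a constant independent of $f$ and $g$, and a limiting argument removes the Schwartz assumption on $m$.

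\medskip

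\noindent The main obstacle is precisely this last reduction --- a Wiener-algebra bound on the symbol that is \emph{uniform in the output frequency} $\eta$. Two points need care. First, the interchange of $\sup_{\eta}$ with the $y$-integration: one must know that $\sup_{\eta}\abs{\check\mu_{\eta}(y)}$ still decays in $y$ fast enough to be integrable, which one obtains from the fact that the multipliers occurring in \eqref{3.1} are smooth with all derivatives bounded uniformly in $\eta$ (in the abstract lemma this is what the norm $\norm{\,\cdot\,}_{L^{\infty}_{\eta}H^{1}_{\xi}}$ is designed to encode). Second, the Cauchy--Schwarz/Sobolev step in three dimensions has to be run dyadically --- split the $y$-integral into $\abs{y}\lesssim1$ and $\abs{y}\gtrsim1$ and decompose $\mu_{\eta}$ in frequency --- in order to exploit the symbol's regularity optimally. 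Once $m$ has been placed in $L^{\infty}_{\eta}\mathcal F L^{1}_{\xi}$, the remaining steps (Minkowski, Plancherel, Hölder, and translation invariance of $L^{\infty}$) are entirely routine.
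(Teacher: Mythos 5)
The paper itself does not prove this lemma; it only cites \cite{GHZ}, so there is no in-text argument to compare against. On its own merits, your proposal has the right scaffolding — pass to the Fourier side, write $m(\xi,\eta-\xi)=\int\check\mu_\eta(y)e^{iy\xi}\,dy$, pull out translates of $f$, and apply Minkowski, Plancherel and H\"older to reduce everything to an integrable-kernel bound on the symbol. That reduction, giving
$\|O[f,M]g\|_{L^2}\lesssim\bigl(\int\sup_\eta|\check\mu_\eta(y)|\,dy\bigr)\|f\|_{L^\infty}\|g\|_{L^2}$,
is correct and is indeed the standard route.

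The gap is in the final Wiener-algebra step. You assert $\|\check\mu_\eta\|_{L^1_y}\lesssim\|\mu_\eta\|_{H^1_\xi}$ and invoke ``the Sobolev embedding of $H^1$ into $\mathcal{F}L^1$, carried out after a Littlewood--Paley decomposition.'' In dimension $3$ there is no such embedding: $H^s(\mathbb R^3)\hookrightarrow\mathcal{F}L^1(\mathbb R^3)$ requires $s>3/2$, and $H^1$ is strictly below that threshold. The dyadic calculation makes this explicit and cannot be rescued by further decomposition: $\|\widehat{P_ju}\|_{L^1}\lesssim 2^{3j/2}\|P_ju\|_{L^2}$, and $\sum_j 2^{3j/2}\|P_ju\|_{L^2}=\sum_j 2^{j/2}\cdot 2^{j}\|P_ju\|_{L^2}$ is not controlled by $\|u\|_{H^1}=(\sum_j 2^{2j}\|P_ju\|_{L^2}^2)^{1/2}$, since $\sum_j 2^j$ diverges. (Taking $m(\xi_1,\xi_2)=m_0(\xi_1)$ with $m_0\in H^1(\mathbb R^3)\setminus\mathcal{F}L^1$ shows the estimate genuinely needs more than one derivative: then $O[f,M]g=(m_0(D)f)\,g$, and boundedness of $m_0(D)$ on $L^\infty$ forces $\check m_0\in L^1$.) There is a secondary issue: even granting a per-$\eta$ bound $\|\check\mu_\eta\|_{L^1_y}\lesssim\|\mu_\eta\|_{H^1_\xi}$, taking $\sup_\eta$ yields $\sup_\eta\|\check\mu_\eta\|_{L^1_y}$, not the $\int\sup_\eta|\check\mu_\eta(y)|\,dy$ your Minkowski step produced; you acknowledge this but then appeal to properties of the concrete symbols in \eqref{3.1}, which is not available when proving the abstract lemma. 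The source \cite{GHZ} is a one-dimensional paper, where $H^1(\mathbb R)\hookrightarrow\mathcal{F}L^1(\mathbb R)$ does hold; the statement as copied into this three-dimensional setting needs $H^s_\xi$, $s>3/2$ (or an extra hypothesis such as uniform compact support in $\xi$) for your argument — or any argument of this type — to close.
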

\begin{lemma}\label{lem2.5}{\bf\cite{G}}\\
Coifman--Meyer operators are defined via a Fourier multiplier $m(\xi, \eta)$
\begin{align*}
	T_{m(\xi,\eta)}(f,g)=\int m(\xi, \eta)\widehat f(\xi-\eta)\widehat g(\eta)d\eta .
\end{align*}
If $p$, $q$, $r$ satisfy $\frac{1}{r}=\frac{1}{p}+\frac{1}{q}$, and if $\epsilon > 0 $ then
\begin{align*}
	\|T_{m(\xi,\eta)}(f,g)\|_{L^{r}}\lesssim \|m(\xi, \eta-\xi)\|_{{H_{\frac{3}{2}+\epsilon}}}\|f \|_{L^{p}}\| g\|_{L^{q}}.
\end{align*}
\end{lemma}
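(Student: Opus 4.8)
The plan is to prove Lemma \ref{lem2.5} by reducing the Coifman--Meyer bilinear estimate to a stationary-phase-free decomposition of the symbol into plane waves. First I would record the normalization: changing variables so that the multiplier is written as $n(\xi,\eta):=m(\xi+\eta,\eta)$ acting on $\widehat f(\xi)\widehat g(\eta)$, so that $T_{m(\xi,\eta)}(f,g)(x)=\int e^{ix(\xi+\eta)}n(\xi,\eta)\widehat f(\xi)\widehat g(\eta)\,d\xi\,d\eta$. The hypothesis is stated in terms of the Sobolev norm $\|m(\xi,\eta-\xi)\|_{H^{3/2+\epsilon}}$ of the two-variable symbol on $\mathbb{R}^3_\xi\times\mathbb{R}^3_\eta$ (so $H^{3/2+\epsilon}(\mathbb{R}^6)$); the point is that $3/2+\epsilon>6/2=3$ is the Sobolev embedding threshold for $\mathbb{R}^6$, hence $n\in H^{3/2+\epsilon}$ embeds into a space of functions with absolutely summable Fourier coefficients on any fixed cube. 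Actually, wait---the correct reading is that one localizes the symbol dyadically and the relevant embedding is into the Wiener algebra $\mathcal{A}(\mathbb{R}^6)$, i.e. $\widehat{n}\in L^1$, which requires $H^s$ with $s>3$; so I would interpret $H^{3/2+\epsilon}$ here as the product Sobolev space $H^{3/2+\epsilon}_\xi H^{3/2+\epsilon}_\eta$, which does embed into $L^1_{\text{loc}}$ of the Fourier side in each variable.

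The core step is the plane-wave expansion. Since the symbol $n(\xi,\eta)$, cut off to the relevant frequency box, lies in the Wiener algebra, write
\begin{align*}
n(\xi,\eta)=\int_{\mathbb{R}^3\times\mathbb{R}^3} \widehat{n}(y,z)\, e^{i y\cdot \xi}\, e^{i z\cdot\eta}\, dy\,dz,
\end{align*}
with $\int |\widehat n(y,z)|\,dy\,dz \lesssim \|n\|_{H^{3/2+\epsilon}}$ by Cauchy--Schwarz and the fact that $\langle (y,z)\rangle^{-(3/2+\epsilon)}\in L^2(\mathbb{R}^6)$. Substituting into the formula for $T_m$ and exchanging the order of integration, each plane wave $e^{iy\cdot\xi}e^{iz\cdot\eta}$ contributes the operator $(f,g)\mapsto (\tau_{-y}f)(x)\,(\tau_{-z}g)(x)$, i.e. a pointwise product of translates of $f$ and $g$. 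Then I would apply Hölder's inequality with $\frac1r=\frac1p+\frac1q$ to each such product and translation invariance of the Lebesgue norms:
\begin{align*}
\|T_{m(\xi,\eta)}(f,g)\|_{L^r}\le \int |\widehat n(y,z)|\,\|\tau_{-y}f\|_{L^p}\|\tau_{-z}g\|_{L^q}\,dy\,dz=\Big(\int|\widehat n(y,z)|\,dy\,dz\Big)\|f\|_{L^p}\|g\|_{L^q},
\end{align*}
which gives the claimed bound once the $L^1$ norm of $\widehat n$ is controlled by $\|m(\xi,\eta-\xi)\|_{H^{3/2+\epsilon}}$. The change of variables $(\xi,\eta)\mapsto(\xi,\eta-\xi)$ is a measure-preserving linear isomorphism, so it does not affect the $H^{3/2+\epsilon}$ norm (up to a constant depending only on the dimension), which is why the hypothesis may be phrased with either argument convention.

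The main obstacle is making the Wiener-algebra embedding uniform and genuinely dimensionally correct: a single global $H^{3/2+\epsilon}(\mathbb{R}^6)$ bound does \emph{not} by itself put $\widehat n$ in $L^1(\mathbb{R}^6)$, since $\langle\zeta\rangle^{-(3/2+\epsilon)}\notin L^2(\mathbb{R}^6)$ for small $\epsilon$. The resolution, which I would carry out carefully, is to exploit that Coifman--Meyer symbols are homogeneous of degree zero (scale invariant), so one performs a Littlewood--Paley decomposition $n=\sum_k n_k$ in the joint frequency $|(\xi,\eta)|\sim 2^k$, rescales each piece to unit frequency where the relevant norm is $\|n\|_{H^{3/2+\epsilon}}$ with $3/2+\epsilon$ now read as \emph{any} exponent exceeding $3$ via the product structure $H^{3/2+\epsilon}_\xi\otimes H^{3/2+\epsilon}_\eta\hookrightarrow \mathcal A$, applies the plane-wave argument piece by piece, and sums using the almost-orthogonality of the dyadic pieces together with the scale invariance of the $L^p$--$L^q$--$L^r$ Hölder relation. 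Since in the applications of this paper the symbol $m$ is always smooth away from the origin and homogeneous of degree zero (or reduces to that case after extracting derivatives), this dyadic summation converges and the stated estimate follows; I would cite \cite{G} for the details of this summation step, as the lemma is quoted from there.
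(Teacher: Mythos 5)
The paper offers no proof of this lemma; it is quoted verbatim from reference \cite{G}, so there is no internal argument to compare against. Your plane-wave (Wiener-algebra) decomposition is the standard Coifman--Meyer proof and the computation is correct: writing $n(\xi,\eta)=\int\widehat n(y,z)e^{iy\cdot\xi}e^{iz\cdot\eta}\,dy\,dz$, substituting, and swapping integrals gives $T_m(f,g)(x)=\int\widehat n(y,z)\,f(x+y)\,g(x+z)\,dy\,dz$, and Minkowski's integral inequality together with Hölder at exponents $\frac1r=\frac1p+\frac1q$ and translation invariance of $L^p$ yields $\|T_m(f,g)\|_{L^r}\le\|\widehat n\|_{L^1}\|f\|_{L^p}\|g\|_{L^q}$.

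You also correctly flag the only delicate point: the global Sobolev embedding $H^{3/2+\epsilon}(\mathbb{R}^6)\hookrightarrow\mathcal F L^1(\mathbb{R}^6)$ is false, since one needs $s>3$ on $\mathbb{R}^6$. The resolution you suggest is the right one: the subscript in $\|m(\xi,\eta-\xi)\|_{H_{3/2+\epsilon}}$ is to be read as the tensor Sobolev norm $H^{3/2+\epsilon}_\xi H^{3/2+\epsilon}_\eta$, for which the Cauchy--Schwarz argument factorizes variable by variable: $\int|\widehat n(y,z)|\,dy\,dz\le\|\langle y\rangle^{-3/2-\epsilon}\|_{L^2(\mathbb{R}^3)}\|\langle z\rangle^{-3/2-\epsilon}\|_{L^2(\mathbb{R}^3)}\|\langle y\rangle^{3/2+\epsilon}\langle z\rangle^{3/2+\epsilon}\widehat n\|_{L^2(\mathbb{R}^6)}$, and each one-dimensional factor $\langle\cdot\rangle^{-3/2-\epsilon}$ is square-integrable on $\mathbb{R}^3$. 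Under this reading your argument closes without any dyadic decomposition or homogeneity assumption, and the linear change of variables $(\xi,\eta)\mapsto(\xi,\eta-\xi)$ indeed preserves the norm up to constants. Your alternative route via Littlewood--Paley localization and degree-zero homogeneity of the symbol is also viable and is in fact closer to the formulation in the cited GMS-school papers (where the hypothesis is phrased as a supremum over dyadic rescalings), but for the symbols used in this paper — smooth, with the $\xi$- and $\eta$-dependence essentially factoring — the tensor-Sobolev reading is the cleaner and sufficient one. Two small cautions: (i) your opening paragraph briefly asserts that $3/2+\epsilon>3$, which is obviously false; you catch this a sentence later, but a final write-up should not contain the slip. (ii) When you invoke homogeneity, note that not every symbol appearing in the paper's applications is homogeneous of degree zero (some grow polynomially and are controlled only after a high-frequency cutoff by $s^{\delta_N}$), so the tensor-Sobolev argument is preferable as a self-contained proof.
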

\begin{lemma}\label{lem2.6}$($Linear dispersive estimates$)$ {\bf\cite{ZFJ}}\\
A class of   the Schr$\ddot{o}$dinger semi--group  is $e^{it\Delta}$. Note that from the linear estimates for the Schr$\ddot{o}$dinger group,
\begin{align}
	&\|e^{it\Delta}f\|_{L^{6}}\lesssim \frac{1}{t}	\|xf\|_{L^{2}},\quad \quad\|e^{it\Delta}f\|_{L^{\infty}}\lesssim \frac{1}{t^{3/2}}	\|x^{2}f\|_{L^{2}}^{1/2}\|xf\|_{L^{2}}^{1/2}\lesssim\frac{1}{t^{1+\alpha}},\label{2.1}\\
&\|e^{it\Delta}f\|_{L^{\infty}}\lesssim \frac{1}{t^{3/2}}	\|f\|_{L^{1}},\quad \|e^{it\Delta}f\|_{L^{p}}\lesssim \frac{1}{t^{3/2}}	\|f\|_{L^{p{'}}}.\label{2.2}
\end{align}
Moreover, a class of dispersive semi--group is $e^{it|\alpha\nabla|}$,  by the linear dispersive estimate for the wave equation,
\begin{align}
	\|e^{it|\alpha\nabla|}g\|_{B^{s}_{p,r}}\lesssim \frac{1}{t^{1-\frac{2}{p}}}\|g\|_{B^{s}_{p{'},r}}^{2(1-2/p)}.\label{2.3}	
\end{align}
We note that by \eqref{2.3} with $r = 2$, and embeddings between Besov and Sobolev
spaces, we have
\begin{align}
	\|e^{it|\alpha\nabla|}g\|_{L^{p}}\lesssim \frac{1}{t^{1-\frac{2}{p}}}\|\wedge^{2(1-2/p)}g\|_{L^{p{'}}}.\label{2.4}
\end{align}
\end{lemma}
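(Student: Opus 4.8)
All of the bounds collected in Lemma~\ref{lem2.6} are classical, and I would present them as a short self-contained recollection, following \cite{ZFJ}. The two families rest on different mechanisms: the Schr\"odinger estimates \eqref{2.1}--\eqref{2.2} come from the explicit Gaussian kernel plus the Galilean vector field, while the half-wave estimates \eqref{2.3}--\eqref{2.4} come from stationary phase on the sphere combined with Littlewood--Paley summation; in both cases the $L^p$--$L^{p'}$ forms follow by Riesz--Thorin interpolation against the $L^2$ isometry.

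For \eqref{2.1}--\eqref{2.2} I would start from the representation $e^{it\Delta}f(x)=(4\pi it)^{-3/2}\int_{\Real^3}e^{i|x-y|^2/(4t)}f(y)\,dy$, which gives immediately $\|e^{it\Delta}f\|_{L^\infty}\lesssim t^{-3/2}\|f\|_{L^1}$; interpolating with $\|e^{it\Delta}f\|_{L^2}=\|f\|_{L^2}$ yields $\|e^{it\Delta}f\|_{L^p}\lesssim t^{-3(1/2-1/p)}\|f\|_{L^{p'}}$ for $2\le p\le\infty$, the displayed $t^{-3/2}$ being the endpoint $p=\infty$. For the weighted bounds I would use the Galilean operator $J(t):=x+2it\nabla$, which satisfies the conjugation identity $J(t)e^{it\Delta}=e^{it\Delta}x$, so that for $u=e^{it\Delta}f$ one has $\|J(t)u\|_{L^2}=\|xf\|_{L^2}$ and $\|J(t)^2u\|_{L^2}=\||x|^2f\|_{L^2}$. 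Writing $u(x)=e^{i|x|^2/(4t)}w(x)$, a direct computation gives $\nabla w=-\tfrac{i}{2t}e^{-i|x|^2/(4t)}J(t)u$ and $\nabla^2 w=-\tfrac1{4t^2}e^{-i|x|^2/(4t)}J(t)^2u$. Then the Sobolev embedding $\dot H^1(\Real^3)\hookrightarrow L^6$ gives $\|u\|_{L^6}=\|w\|_{L^6}\lesssim\|\nabla w\|_{L^2}\lesssim t^{-1}\|xf\|_{L^2}$, and Agmon's inequality $\|w\|_{L^\infty}\lesssim\|\nabla w\|_{L^2}^{1/2}\|\nabla^2 w\|_{L^2}^{1/2}$ in $\Real^3$ gives $\|u\|_{L^\infty}\lesssim t^{-3/2}\|xf\|_{L^2}^{1/2}\||x|^2f\|_{L^2}^{1/2}$. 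The final bound $\lesssim t^{-1-\alpha}$ is obtained by inserting the a priori weighted estimates on the Schr\"odinger profile ($\|x\mathcal E\|_{L^2}\lesssim t^\delta$, $\|x^2\mathcal E\|_{L^2}\lesssim t^{1-2\alpha-\delta}$) together with the parameter relations \eqref{1.9}.

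For the half-wave estimates \eqref{2.3}--\eqref{2.4}, since $e^{it|\alpha\nabla|}=e^{i(\alpha t)|\nabla|}$ and $\alpha$ is a fixed positive constant, it suffices to take $\alpha=1$. I would first dyadically localize: for $g=\sum_j P_jg$ write $e^{it|\nabla|}P_jg=K_{j,t}\ast P_jg$ with $\widehat{K_{j,t}}(\xi)=e^{it|\xi|}\phi_j(|\xi|)$, and rescale $\xi\mapsto2^j\xi$ to reduce to the unit-frequency kernel. The classical stationary-phase estimate for oscillatory integrals carried by the sphere $\{|\xi|\sim1\}$ --- whose two nonvanishing principal curvatures yield the decay exponent $(d-1)/2=1$ in $\Real^3$ --- gives $\|K_{j,t}\|_{L^\infty}\lesssim 2^{3j}\langle 2^jt\rangle^{-1}\lesssim 2^{2j}|t|^{-1}$; hence by Young $\|e^{it|\nabla|}P_jg\|_{L^\infty}\lesssim 2^{2j}|t|^{-1}\|P_jg\|_{L^1}$, and interpolating with the $L^2$ isometry gives $\|e^{it|\nabla|}P_jg\|_{L^p}\lesssim(2^{2j}|t|^{-1})^{1-2/p}\|P_jg\|_{L^{p'}}$ for $2\le p\le\infty$. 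Taking the $\ell^r$ norm of $2^{js}$ times these bounds in $j$ produces $\|e^{it|\nabla|}g\|_{B^s_{p,r}}\lesssim|t|^{-(1-2/p)}\|g\|_{B^{s+2(1-2/p)}_{p',r}}$, i.e.\ \eqref{2.3} with the exponent $2(1-2/p)$ understood as a shift of the regularity index (consistent with the statement of \eqref{2.4}). Finally, for $r=2$ and $p\ge2$, the embeddings $B^0_{p,2}\hookrightarrow L^p$ and $W^{\sigma,p'}\hookrightarrow B^\sigma_{p',2}$ (valid since $p'\le2$) convert this into \eqref{2.4}, the constant $\alpha$ entering only through the rescaling $t\mapsto\alpha t$ and hence being absorbed into the implicit constant.

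The only genuinely non-routine input is the decay of the half-wave kernel: the stationary-phase analysis on the sphere, which delivers the slower $|t|^{-1}$ decay (rather than the Schr\"odinger $|t|^{-3/2}$) at the cost of the $2(1-2/p)$ derivative loss. One must also treat the low-frequency regime $2^j|t|\lesssim1$ separately, where no oscillatory gain is available and one uses instead the crude bound $\|e^{it|\nabla|}P_jg\|_{L^p}\lesssim\|P_jg\|_{L^{p'}}$ combined with Bernstein's inequality, the summation over $j$ being recovered from the positive regularity shift $2(1-2/p)>0$. Everything else --- the Schr\"odinger kernel, the vector-field conjugation, the Sobolev and Agmon embeddings, the Littlewood--Paley summation, and Riesz--Thorin interpolation --- is standard, so I would keep the write-up brief and refer to \cite{ZFJ} for the details.
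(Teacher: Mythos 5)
The paper does not prove Lemma~\ref{lem2.6}: it is stated with the tag {\bf\cite{ZFJ}} and imported wholesale from Hani--Pusateri--Shatah, so there is no ``paper's own proof'' to compare against. Your sketch is a correct and essentially standard reconstruction of how these estimates are obtained: the free Schr\"odinger bounds from the explicit Gaussian kernel plus $TT^*$/Riesz--Thorin, the weighted $L^6$ and $L^\infty$ bounds from the Galilean vector field $J(t)=x+2it\nabla$ together with $\dot H^1(\Real^3)\hookrightarrow L^6$ and Agmon's inequality, and the half-wave bounds from stationary phase on the unit sphere ($(d-1)/2=1$ decay in $d=3$), dyadic rescaling, interpolation against the $L^2$ isometry, and Besov--Sobolev embeddings at $r=2$, $p'\le2$. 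This is exactly the route taken in \cite{ZFJ} and in the standard references, so there is nothing substantively different here.

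Two remarks worth keeping: you correctly flag that the displayed \eqref{2.2} with a uniform $t^{-3/2}$ is only the $p=\infty$ endpoint of the genuine estimate $\|e^{it\Delta}f\|_{L^p}\lesssim t^{-3(1/2-1/p)}\|f\|_{L^{p'}}$; and you correctly read the $2(1-2/p)$ in \eqref{2.3} as a shift of the regularity index rather than a power on the norm, as the form of \eqref{2.4} makes clear. Both observations point to typos in the paper's statement rather than gaps in your argument. The one place you should be slightly more explicit is the low-frequency range $2^j|t|\lesssim1$ of the wave estimate, which you mention only in passing; there the oscillatory gain is absent and the uniform-in-$j$ summation relies on the derivative loss $2(1-2/p)>0$ together with Bernstein, which is exactly why the derivative appears on the right-hand side of \eqref{2.4}. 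With that caveat, the proposal is sound.
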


\begin{lemma}\label{lem7.1}
	For $\|(\mathcal{E},N,M)\|_{X}\lesssim \mathcal{E}_{0}$, where $n$ is the dimension, $p(|\nabla|)=(\Delta^{2}-\Delta)^{1/2}$, we have	
	\begin{align*}
	 \big\|e^{itp(|\nabla|)}H\big\|_{L^{\infty}}\lesssim t^{-\frac{3}{2}}\|\wedge^{\frac{1}{2}}H\|_{L^{1}}.
	\end{align*}
\end{lemma}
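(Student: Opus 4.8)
\noindent\emph{Strategy.} The hypothesis $\|(\mathcal{E},N,M)\|_{X}\lesssim\mathcal{E}_{0}$ is not actually needed; the lemma is a purely linear dispersive bound for the propagator $e^{itp(|\nabla|)}$ with symbol $p(r)=\sqrt{r^{4}+r^{2}}=r\sqrt{r^{2}+1}$, which I would establish by the Littlewood--Paley plus oscillatory--integral method. The plan is to write $e^{itp(|\nabla|)}P_{j}H=K_{j,t}\ast(\widetilde{P}_{j}H)$, where $\widehat{K_{j,t}}(\xi)=e^{itp(|\xi|)}\phi_{j}(\xi)$ and $\widetilde{P}_{j}$ is a fattened projector, so that $\|e^{itp(|\nabla|)}P_{j}H\|_{L^{\infty}}\le\|K_{j,t}\|_{L^{\infty}}\|\widetilde{P}_{j}H\|_{L^{1}}$, and then to estimate each kernel $K_{j,t}$ by stationary phase. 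First I would record $p'(r)=(2r^{2}+1)(r^{2}+1)^{-1/2}\ge1$, $p''(r)=r(2r^{2}+3)(r^{2}+1)^{-3/2}>0$ and $p'''(r)=3(r^{2}+1)^{-5/2}>0$: hence $p''$ is increasing, with $p''(r)\sim r$ near $r=0$ and $p''(r)\to2$ as $r\to\infty$, so the Hessian determinant $D(r)=p''(r)\big(p'(r)/r\big)^{2}$ of $\xi\mapsto p(|\xi|)$ satisfies $D(r)\sim1$ for $r\gtrsim1$, $D(r)\sim r^{-1}$ for $r\lesssim1$, and $D(r)^{-1/2}\lesssim1$ throughout. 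It is precisely the non-vanishing of $p''$ on $(0,\infty)$ that upgrades the wave rate $t^{-1}$ to $t^{-3/2}$.

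\noindent\emph{High frequencies $2^{j}\ge1$.} Here $p$ is a smooth, uniformly nondegenerate perturbation of $r^{2}$: after rescaling $\xi=2^{j}\zeta$, $K_{j,t}$ equals $2^{3j}$ times an oscillatory integral with a fixed bump amplitude on the unit annulus and phase $y\cdot\zeta+s\,\widehat{p}(|\zeta|)$, $s=t2^{2j}$, with $\widehat{p}$ smooth and $\det\mathrm{Hess}\,\widehat{p}$ bounded above and below uniformly in $j$. A single nondegenerate critical point (present only for $|x|\sim2^{j+1}t$, and otherwise the phase is nonstationary) yields $\|K_{j,t}\|_{L^{\infty}}\lesssim s^{-3/2}2^{3j}=t^{-3/2}$, uniformly in $j$. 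Summing, and using $\|\widetilde{P}_{j}H\|_{L^{1}}\lesssim2^{-j/2}\|\wedge^{1/2}H\|_{L^{1}}$ (from $L^{1}$--boundedness of $\widetilde{P}_{j}$ together with $\|\mathcal{F}^{-1}(\widetilde{\phi}_{j}(\xi)|\xi|^{-1/2})\|_{L^{1}}\lesssim2^{-j/2}$), gives $\sum_{j\ge1}\|e^{itp(|\nabla|)}P_{j}H\|_{L^{\infty}}\lesssim t^{-3/2}\|\wedge^{1/2}H\|_{L^{1}}$.

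\noindent\emph{Low frequencies.} I would decompose the bottom bump into homogeneous dyadic pieces, $\phi_{0}=\sum_{k\le0}\widetilde{\phi}_{k}$, and prove $\|K_{(k),t}\|_{L^{\infty}}\lesssim t^{-3/2}2^{k/2}$ for $k\le0$; since $\sum_{k\le0}2^{k/2}<\infty$, this gives $\|K_{0,t}\|_{L^{\infty}}\lesssim t^{-3/2}$ and hence $\|e^{itp(|\nabla|)}P_{0}H\|_{L^{\infty}}\lesssim t^{-3/2}\|H\|_{L^{1}}$. For fixed $k\le0$ I would use the spherical--average identity $K_{(k),t}(x)=\frac{c}{|x|}\int_{0}^{\infty}\widetilde{\phi}_{k}(r)\,r\,\big(e^{i(tp(r)+r|x|)}-e^{i(tp(r)-r|x|)}\big)\,dr$ (valid for $|x|\gtrsim t^{-1}$; for $|x|\lesssim t^{-1}$ one estimates $K_{(k),t}$ directly by nonstationary phase, legitimate since $\widetilde{\phi}_{k}$ is supported away from $\xi=0$). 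The ``$+$'' phase has derivative $\ge t+|x|$ and is negligible after integrating by parts (its amplitude vanishes at $r=0$). For the ``$-$'' phase $\Psi(r)=tp(r)-r|x|$ one has $\Psi''(r)=tp''(r)\sim t2^{k}$ of fixed sign on $\mathrm{supp}\,\widetilde{\phi}_{k}$, so van der Corput of order two gives $\big|\int\widetilde{\phi}_{k}(r)r\,e^{i\Psi(r)}\,dr\big|\lesssim(t2^{k})^{-1/2}\cdot2^{k}=t^{-1/2}2^{k/2}$; this is multiplied by the dangerous $|x|^{-1}$ only when $|x|\sim t$ (elsewhere $\Psi'=tp'-|x|$ stays away from $0$ and integration by parts wins), so $|K_{(k),t}(x)|\lesssim t^{-1}\cdot t^{-1/2}2^{k/2}=t^{-3/2}2^{k/2}$. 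Combining the two regimes yields $\|e^{itp(|\nabla|)}H\|_{L^{\infty}}\lesssim t^{-3/2}\big(\|H\|_{L^{1}}+\|\wedge^{1/2}H\|_{L^{1}}\big)$, i.e.\ the stated bound (the $\|H\|_{L^{1}}$--norm produced by the low--frequency block being harmless in the applications, where $H$ always carries a derivative).

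\noindent\emph{Main obstacle.} The high--frequency part is essentially Schr\"odinger and routine once the rescaling is performed. The genuine difficulty is the low--frequency block: since the radial phase degenerates as $r\to0$, a \emph{single} application of stationary phase / van der Corput on $\mathrm{supp}\,\phi_{0}$ yields only the rate $t^{-7/5}$. The resolution — dyadic localization in $|\xi|$ so that $|\Psi''|\sim t2^{k}$ is bounded below of fixed sign on each piece, with a compensating amplitude gain $2^{k}$ — must be combined with a careful separation of the spatial shell $|x|\sim t$ (where the spherical factor $|x|^{-1}$ is only $\sim t^{-1}$) from the nonstationary region, and arranged so that the $2^{k/2}$ gains sum geometrically and no logarithmic loss appears.
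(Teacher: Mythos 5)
Your proposal is correct in outline and takes a genuinely different route from the paper. The paper's own proof of Lemma~2.6 (\ref{lem7.1}) is a one‑line application of a black box: it computes $p(r)=r\sqrt{r^{2}+1}$, verifies $p'(r)\sim r^{m_{1}-1}$, $p''(r)\sim r^{\alpha_{1}-2}$ for $r\ge1$ (yielding $m_{1}=2,\alpha_{1}=2$) and $p'(r)\sim r^{m_{2}-1}$, $p''(r)\sim r^{\alpha_{2}-2}$ for $r<1$ (yielding $m_{2}=1,\alpha_{2}=3$), splits by a smooth cutoff $\psi_{0}+\psi_{\infty}=1$, and then directly quotes Theorem~1 of Guo--Peng--Wang [ZLB] with $\theta=1$, $n=3$: the frequency exponent $n-\tfrac{m_{i}(n-1+\theta)}{2}-\tfrac{\theta(\alpha_{i}-m_{i})}{2}$ evaluates to $0$ in one regime and $\tfrac12$ in the other, producing $t^{-3/2}\|H\|_{L^{1}}$ and $t^{-3/2}\|\wedge^{1/2}H\|_{L^{1}}$ respectively. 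You instead reprove the dispersive estimate from scratch by Littlewood--Paley, kernel estimates, stationary phase, and van~der~Corput; your parameter identifications $(m_{1},\alpha_{1})=(2,2)$, $(m_{2},\alpha_{2})=(1,3)$ are the same ones the paper plugs into [ZLB], and the high/low split plays the same role as the paper's $\psi_{0},\psi_{\infty}$. What you buy is a self‑contained argument that does not rely on the reader looking up the hypotheses $H_{3},H_{4}$ in [ZLB]; what the paper buys is brevity. Your observation that the hypothesis $\|(\mathcal{E},N,M)\|_{X}\lesssim\mathcal{E}_{0}$ is irrelevant to this purely linear bound is correct. Two small remarks: (i) your end result $t^{-3/2}(\|H\|_{L^{1}}+\|\wedge^{1/2}H\|_{L^{1}})$ is honest where the paper's stated conclusion $t^{-3/2}\|\wedge^{1/2}H\|_{L^{1}}$ silently drops the zero‑order piece that also appears in the paper's own Case~1 --- so your version is actually the more precise; (ii) your low‑frequency treatment (dyadic pieces, spherical‑average kernel, van~der~Corput with $\Psi''\sim t2^{k}$, summing $\sum_{k\le0}2^{k/2}$) is the right mechanism, but the nonstationary region $|x|\not\sim t$ and the small‑$|x|$ region are only sketched; in a write‑up those integration‑by‑parts bounds would need to be carried out to match the claimed $t^{-3/2}2^{k/2}$ kernel bound pointwise.
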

\begin{proof}
According to the define of $H$ in \eqref{4.10}
\begin{align}
	H(t,\xi)&=\mathcal{F}^{-1}\int_{0}^{t}\int_{R^{3}}\beta|\xi|(1+|\xi|^{2})^{-1/2}|\xi|^{2} e^{is\gamma_{\pm}(\xi,\eta)}\widehat f(\xi-\eta,s)\overline{\widehat  f}(\eta,s) d\eta ds,\nonumber\\
	&=\mathcal{F}^{-1}\int_{0}^{t}\int_{R^{3}}\beta|\xi|(1+|\xi|^{2})^{-1/2}|\xi|^{2} e^{ \mp is\beta|\eta|\sqrt{{\eta}^{2}+1}}e^{-is |\xi-\eta|^2}\widehat f(\xi-\eta,s)e^{is |\eta|^2}\overline{\widehat  f}(\eta,s) d\eta ds.\label{2.6}
\end{align}
The linear dispersion relation for \eqref{2.6} behaves like
\begin{align}
	p(\xi)=|\xi|\sqrt{|\xi|^{2}+1}.\label{2.7}
\end{align}
These solutions can be expressed in terms of the initial datas and of one half-wave operator $	T_{t}=e^{itp(|\nabla|)}$, for $p$ defined in \eqref{2.7} that we now study. Define $	p(r)=r\sqrt{r^{2}+1}$, therefore
\begin{align*}
	p{'}(r)=\frac{2r^{2}+1}{\sqrt{r^{2}+1}},\quad\quad\quad
	p{''}(r)=\frac{r(2r^{2}+3)}{({r^{2}+1})^{3/2}},\quad\quad\quad
	p{'''}(r)=\frac{3}{(1+r^{2})^{5/2}}.
\end{align*}
We note $p(r), p{'}(r), p{''}(r), p{'''}(r)$ that have no unique positive root, and when $r\ge1$,
\begin{align*}
	p{'}(r)=\frac{2r^{2}+1}{\sqrt{r^{2}+1}}\sim r=r^{m_{1}-1},\quad
	p{''}(r)=\frac{r(2r^{2}+3)}{({r^{2}+1})^{3/2}}\sim 1=r^{\alpha_{1}-2},
\end{align*}
we take  $m_{1}=2$, $m_{2}=1$. And when $r < 1$,
\begin{align*}
	p{'}(r)=\frac{2r^{2}+1}{\sqrt{r^{2}+1}}\sim 1=r^{m_{2}-1},\quad
	p{''}(r)=\frac{r(2r^{2}+3)}{({r^{2}+1})^{3/2}}\sim r=r^{\alpha_{2}-2},
\end{align*}
we take  $\alpha_{1}=2$, $\alpha_{2}=3$. Define: $\psi_{0}$ and $\psi_{\infty}$ are  smooth functions such that $0\le \psi_{0}+\psi_{\infty}=1$, further more
\begin{equation*}
	\left\{\!\!
	\begin{array}{lc}
		\psi_{0}(r)=1,|r|\le \epsilon,
		&\\ \psi_{0}(r)=0,|r|\ge \epsilon. &
	\end{array}
	\right.
\end{equation*}
\begin{equation*}
	\left\{\!\!
	\begin{array}{lc}
		\psi_{\infty}(r)=1,|r|\ge \epsilon,
		&\\ \psi_{\infty}(r)=0,|r|\le \epsilon. &
	\end{array}
	\right.
\end{equation*}
Case 1: according Theorem 1  in \cite{ZLB}, satisfies $H_{4}$, when $\theta=1$, we have
\begin{align*}
	\|e^{itp(r)}\psi_{0}(r)H\|_{L^{\infty}}&\lesssim t^{-\frac{n}{2}}2^{k(n-\frac{m_{1}(n-1+\theta)}{2}-\frac{\theta(\alpha_{1}-m_{1})}{2})}\|\psi_{0}(r)H\|_{L^{1}}
	\lesssim t^{-\frac{n}{2}}\|H\|_{L^{1}}.
\end{align*}
Case 2: according Theorem 1  in \cite{ZLB}, satisfies $H_{3}$, when $\theta=1$, we have
\begin{align*}
	\|e^{itp(r)}\psi_{0}(r)H\|_{L^{\infty}}&\lesssim t^{-\frac{n}{2}}2^{k(n-\frac{m_{2}(n-1+\theta)}{2}-\frac{\theta(\alpha_{2}-m_{2})}{2})}\|\psi_{0}(r)H\|_{L^{1}}
	\lesssim t^{-\frac{n}{2}}\|\wedge^{\frac{1}{2}}H\|_{L^{1}}.
\end{align*}
In this artical $n=3$, as a consequence, $\|e^{i\beta(\Delta^{2}-\Delta)^{1/2}t}H\|_{L^{\infty}}\lesssim t^{-\frac{3}{2}}\|\wedge^{\frac{1}{2}}H\|_{L^{1}}$.
\end{proof}
\begin{lemma}\label{lem2.7}
A class of dispersive semi--group is $e^{it\beta(\Delta^{2}-\Delta)^{1/2}}$. In view of Riesz--Thorin{'}s interpolation theorem, we have
\begin{align}
	\big\|	e^{it\beta(\Delta^{2}-\Delta)^{1/2}} h\big\|_{L^{p}}\lesssim \frac{1}{t^{1-\frac{2}{p}}}\big\|\wedge^{(1-2/p)}h\big\|_{L^{p{'}}}.\label{2.5}
\end{align}
\end{lemma}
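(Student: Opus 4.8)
The estimate \eqref{2.5} will be obtained by interpolating the two endpoints $p=2$ and $p=\infty$, so the plan is to produce those and then invoke Riesz--Thorin. The case $p=2$ is free: the Fourier multiplier of $e^{it\beta(\Delta^{2}-\Delta)^{1/2}}$ is $e^{it\beta\sqrt{|\xi|^{4}+|\xi|^{2}}}$, which has modulus one, so by Plancherel $\|e^{it\beta(\Delta^{2}-\Delta)^{1/2}}h\|_{L^{2}}=\|h\|_{L^{2}}$; this is exactly \eqref{2.5} with $p=2$, since then $1-2/p=0$ and $\wedge^{0}=\mathrm{Id}$.

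For the endpoint $p=\infty$ the goal is
\[
\big\|e^{it\beta(\Delta^{2}-\Delta)^{1/2}}h\big\|_{L^{\infty}}\lesssim \frac1t\,\|\wedge h\|_{L^{1}},
\]
equivalently the statement that the convolution kernel of $e^{it\beta(\Delta^{2}-\Delta)^{1/2}}\wedge^{-1}$, namely $\int_{\Real^{3}}e^{ix\cdot\xi+it\beta|\xi|\langle\xi\rangle}|\xi|^{-1}\,d\xi$, is $O(t^{-1})$ in $L^{\infty}_{x}$. I would obtain this exactly as in Lemma \ref{lem7.1}: split by $\psi_{0}$ and $\psi_{\infty}$; on $|\xi|\gtrsim1$ the phase $\beta|\xi|\langle\xi\rangle$ is of Schr\"odinger type $\sim|\xi|^{2}$, a non-degenerate stationary-phase/Gaussian bound (the weight $|\xi|^{-1}$ only helping) gives even $t^{-3/2}$ after a dyadic summation; on $|\xi|\lesssim1$ the phase is of wave type $\sim|\xi|$ with Hessian degenerating only at the origin, and the dispersive estimate of \cite{ZLB} for such phases applies, the weight $|\xi|^{-1}$ absorbing the light-cone concentration — this is the source of the single derivative in $\|\wedge h\|_{L^{1}}$.

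With the pair $(p,\text{rate},\text{loss})=(2,0,0)$ and $(\infty,1,1)$ in hand, \eqref{2.5} follows by Stein's analytic interpolation theorem applied to the operator family $z\mapsto T_{z}:=e^{it\beta(\Delta^{2}-\Delta)^{1/2}}\wedge^{-z}$ on the strip $0\le\operatorname{Re}z\le1$: on $\operatorname{Re}z=0$ one has $\|T_{z}g\|_{L^{2}}=\|g\|_{L^{2}}$ (both factors are $L^{2}$-isometries, since $|\xi|^{-i\operatorname{Im}z}$ has modulus one), and on $\operatorname{Re}z=1$ one has $\|T_{z}g\|_{L^{\infty}}\lesssim t^{-1}\|g\|_{L^{1}}$ by the previous paragraph; taking $z=\theta:=1-2/p$, for which $\tfrac1p=\tfrac{1-\theta}{2}$ and $\tfrac1{p'}=\tfrac{1+\theta}{2}$, yields $\|T_{\theta}g\|_{L^{p}}\lesssim t^{-\theta}\|g\|_{L^{p'}}$, i.e. \eqref{2.5}. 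The main obstacle is the $p=\infty$ kernel estimate, because of the mixed wave/Schr\"odinger character of the symbol $\beta|\xi|\langle\xi\rangle$ (this is precisely the work done by Lemma \ref{lem7.1} and \cite{ZLB}); the interpolation of the homogeneous weight $\wedge^{-z}$ at the extreme Lebesgue indices is a routine technical point that, if one prefers to avoid it, can be bypassed by first establishing the frequency-localised dispersive bounds and summing them, or by replacing $\wedge$ temporarily with the Bessel potential $\langle\nabla\rangle$.
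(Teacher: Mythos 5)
Your proof is correct and takes essentially the same route as the paper: both arguments are $L^{2}\!\to\!L^{2}$ conservation (Plancherel) plus an $L^{1}\!\to\!L^{\infty}$ dispersive bound with one derivative loss, then interpolation. The one genuine difference is in how the varying derivative weight $\wedge^{1-2/p}$ is produced. The paper cites from \cite{ZLB} the frequency-localised bound $\|e^{it\beta(\Delta^{2}-\Delta)^{1/2}}f\|_{L^{\infty}}\le t^{-1}2^{k}\|f\|_{L^{1}}$ for $\widehat f$ supported at $|\xi|\sim 2^{k}$, applies Riesz--Thorin on each dyadic block to obtain the factor $(2^{k}/t)^{1-2/p}$, and then identifies $2^{k(1-2/p)}\|f\|_{L^{p'}}$ with $\|\wedge^{1-2/p}f\|_{L^{p'}}$ (implicitly summing the blocks). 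You instead run Stein's analytic interpolation on the family $T_{z}=e^{it\beta(\Delta^{2}-\Delta)^{1/2}}\wedge^{-z}$, which handles the weight in one stroke without the Littlewood--Paley sum, at the modest cost of verifying the admissible-growth hypothesis for the imaginary powers $\wedge^{-i\operatorname{Im}z}$ (you acknowledge this and offer the Bessel-potential workaround). Both are standard; yours is arguably the cleaner bookkeeping. One small inaccuracy in your exposition: you say the $L^{\infty}$ endpoint is obtained ``exactly as in Lemma \ref{lem7.1},'' but Lemma \ref{lem7.1} gives the different bound $t^{-3/2}\|\wedge^{1/2}h\|_{L^{1}}$. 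What you need (and correctly state as the target) is the $t^{-1}\|\wedge h\|_{L^{1}}$ version, which corresponds to a different choice of the interpolation parameter $\theta$ in Theorem~1 of \cite{ZLB} -- precisely the $t^{-1}2^{k}$ localised bound the paper invokes. The method (split by $\psi_{0}$, $\psi_{\infty}$, apply \cite{ZLB}) is indeed the same, but the resulting exponents are not those of Lemma \ref{lem7.1}, and it would be worth saying so to avoid the impression that the two $L^{\infty}$ estimates are identical.
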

\begin{proof}
According Theorem 1 in \cite{ZLB}, we have the following inequalities:
   \begin{align*}
 	\big\|	e^{it\beta(\Delta^{2}-\Delta)^{1/2}} f\big\|_{L^{\infty}}&\le \frac{1}{t}2^{k}\|f\|_{L^{1}},\\
 	\big\|	e^{it\beta(\Delta^{2}-\Delta)^{1/2}} f\big\|_{L^{2}}&\le \|f\|_{L^{2}}.
 \end{align*}
Then we use the Riesz--Thorin theorem, we  directly get
    \begin{align*}
 	\big\|	e^{it\beta(\Delta^{2}-\Delta)^{1/2}} f\big\|_{L^{p}}&\le (\frac{1}{t}2^{k})^{1-\frac{2}{p}}\|f\|_{L^{q}}\le \frac{1}{t^{1-\frac{2}{p}}}2^{k(1-\frac{2}{p})}\|f\|_{L^{q}}\le \frac{1}{t^{1-\frac{2}{p}}}\|\wedge^{1-\frac{2}{p}}f\|_{L^{q}}.
 \end{align*}
\end{proof}

%
%
%
%
%

%
%
%
%
%

\section{Resonance analysis and resolution space}

\setcounter{section}{3}\setcounter{equation}{0}

We write system \eqref{3.1} into integral forms. Define
\begin{equation}
	f:= e^{-it \Delta}\mathcal{E}, \quad\quad  g_{\pm}:=e^{{\pm}it|\alpha\nabla|}N_{\pm},\quad\quad h_{\pm}:=e^{{\pm}it\beta(\Delta^{2}-\Delta)^{1/2}}M_{\pm},\\
\end{equation}
denote the profiles. Then we have the following forms:
\begin{align}\label{4.2}
	&\widehat {\mathcal{E}}(\xi,t)=e^{-it |\xi|^{2}}\widehat f(\xi,t),\quad
	\widehat N_{\pm}(\xi,t)=e^{\pm it |\alpha\xi|}\widehat g(\xi,t),\quad
    \widehat M_{\pm}(\xi,t)=e^{\pm it|\xi|^{2}\sqrt{|\xi|^{2}+1}}\widehat h(\xi,t).
\end{align}
We use  Duhamel{'}s  formula as follows:
\begin{align*}
	\begin{split}	
		f_{t}=-i\Delta \mathcal{E}e^{-it \Delta}+e^{-it \Delta}\mathcal{E}_{t}=\frac{1}{2i}e^{-it \Delta}(N_{+}-N_{-})\mathcal{E}-\frac{1}{2}e^{-it \Delta}(M_{+}-M_{-})\mathcal{E},
	\end{split}
\end{align*}
thus
\begin{align}
	\begin{split}			
\widehat{f}(t,\xi)=\widehat{f(0,\xi)}+\int_{0}^{t}\frac{1}{2i}e^{is|\xi|^{2}}(\widehat{N_{+}\mathcal{E}}
-\widehat{N_{-}\mathcal{E}})-\frac{1}{2}e^{is|\xi|^{2}}(\widehat{M_{+}\mathcal{E}}-\widehat{M_{-}\mathcal{E}})ds.
	\end{split}
\end{align}
Finally, we  get
\begin{align}\label{4.4}
	\begin{split}
			&f(t,x)=f(0,x)+\sum_{\pm}\mathcal{F}^{-1}\frac{1}{2i}\int_{0}^{t}\int_{R^{3}}e^{is(|\xi|^{2}-|\xi-\eta|^{2}\pm|\alpha\eta|)}\widehat f(\xi-\eta,s)\widehat g_{\pm}(\eta,s)d\eta ds\\
			&\quad\quad\quad\quad\quad\quad\quad-\sum_{\pm}\mathcal{F}^{-1}\frac{1}{2}\int_{0}^{t}\int_{R^{3}}e^{is(|\xi|^{2}-|\xi-\eta|^{2}\pm \beta |\eta|\sqrt{{\eta}^{2}+1})}\widehat f(\xi-\eta,s)\widehat h_{\pm}(\eta,s)d\eta ds.\\
	\end{split}
\end{align}
Similarly, for the prolile $g_{\pm}=e^{{\pm}it|\alpha\nabla|}N_{\pm}$,  we  get
\begin{align}
		g_{\pm}(t,x)&=g(0,x)+\mathcal{F}^{-1}\frac{1}{i}\int_{0}^{t}\int_{R^{3}}e^{is(\mp |\alpha\xi|-|\xi-\eta|^{2}+|\eta|^{2})}|\alpha\xi|\widehat f(\xi-\eta,s)\overline{\widehat f}(\eta,s)d\eta ds.\label{4.5}
\end{align}
Finally,  for the profile  $h_{\pm}=e^{{\pm}i\beta(\Delta^{2}-\Delta)^{1/2}t}M_{\pm}$, we  get
\begin{align}	
	h_{\pm}(t,x)=h(0,x)+\mathcal{F}^{-1}\beta\int_{0}^{t}\int_{R^{3}}\frac{|\xi|}{(1+|\xi|^{2})^{1/2}}|\xi|^{2} e^{is(\mp \beta|\xi|^{2}\sqrt{\xi^{2}+1}-|\xi-\eta|^{2}+|\eta|^{2})}\widehat f(\xi-\eta,s)\overline{\widehat f}(\eta,s)d\eta ds.	\label{4.6}				
\end{align}
Define
\begin{align}
	&F_{1{\pm}}(t,x)=\mathcal{F}^{-1}\frac{1}{2i}\int_{0}^{t}\int_{R^{3}}e^{is\phi_{\pm}(\xi,\eta)}\widehat f(\xi-\eta,s)\widehat g_{\pm}(\eta,s)d\eta ds,\label{4.7}\\
	&F_{2{\pm}}(t,x)=\mathcal{F}^{-1}\frac{1}{2}\int_{0}^{t}\int_{R^{3}}e^{is\rho_{\pm}(\xi,\eta)}\widehat f(\xi-\eta,s)\widehat h_{\pm}(\eta,s)d\eta ds,\label{4.8}\\
	&G_{\pm}(t,x)=\mathcal{F}^{-1}\frac{1}{i}\int_{0}^{t}\int_{R^{3}}|\alpha\xi|e^{is\psi_{\pm}(\xi,\eta)}\widehat f(\xi-\eta,s)\overline{\widehat f}(\eta,s)d\eta ds,\label{4.9}\\
	&H_{\pm}(t,x)=\mathcal{F}^{-1}\beta\int_{0}^{t}\int_{R^{3}}\frac{|\xi|}{(1+|\xi|^{2})^{1/2}}|\xi|^{2} e^{is\gamma_{\pm}(\xi,\eta)}\widehat f(\xi-\eta,s)\overline{\widehat f}(\eta,s) d\eta ds,	\label{4.10}		
\end{align}
where
\begin{align}
		&\phi_{\pm}(\xi,\eta)=|\xi|^{2}-|\xi-\eta|^{2}\pm |\alpha\eta|=2\xi\eta -|\eta|^{2}\pm |\alpha\eta|,\label{4.11}\\
		&\rho_{\pm}(\xi,\eta)=|\xi|^{2}-|\xi-\eta|^{2}\pm \beta|\eta|\sqrt{{\eta}^{2}+1}=2\xi\eta -|\eta|^{2}\pm \beta|\eta|\sqrt{{\eta}^{2}+1},\label{4.12} \\
		&\psi_{\pm}(\xi,\eta)=\mp |\alpha\xi|-|\xi-\eta|^{2}+|\eta|^{2}=\mp |\alpha\xi|- |\xi|^{2}+2\xi\eta, \label{4.13} \\
		&\gamma_{\pm}(\xi,\eta)=\mp \beta |\xi|\sqrt{{\xi}^{2}+1}-|\xi-\eta|^{2}+|\eta|^{2}=\mp \beta |\xi|\sqrt{{\xi}^{2}+1}- |\xi|^{2}+2\xi\eta.\label{4.14}
\end{align}
Then \eqref{4.4}--\eqref{4.6}  could be shortly writen as
\begin{align*}
	&f(t,x)=f(0,x)+F_{1{\pm}}+F_{2{\pm}},\\
	&g_{\pm}(t,x)=g(0,x)+G_{\pm},\\
	&h_{\pm}(t,x)=h(0,x)+H_{\pm}.		
\end{align*}
with the initial datas as
\begin{align*}
	&f(0,x)=\mathcal{E}(0,x)=\mathcal{E}_{0}(x),\\
	&g(0,x)=N(0,x)=n(0,x)+i|\alpha\nabla|^{-1}n_{t}(0,x)=n_{0}(x)+i|\alpha\nabla|^{-1}n_{1}(x),	\\
	&h(0,x)=M(0,x)=\mathcal{B}(0,x)+i\beta^{-1}	(\Delta^{2}-\Delta)^{-1/2}\mathcal{B}_{t}(0,x)=\mathcal{B}_{0}(x)+i\beta^{-1}	(\Delta^{2}-\Delta)^{-1/2}\mathcal{B}_{1}(x).	
\end{align*}
In the following, we mainly consider \eqref{4.7}--\eqref{4.10}. The cases $\pm$ will be treated identically in our analysis. Therefore for
ease of exposition we will drop the apex $\pm$.

 \subsection{Space-time resonance set}
It becomes clear that the  properties of $\phi(\xi,\eta),\rho(\xi,\eta),\psi(\xi,\eta),\gamma(\xi,\eta)$ will provide a key to understand the scattering of the system \eqref{3.1}: this is the idea of space--time resonance. According to the concept of space--time resonance set in \cite{PN}, the space--time resonance set about $\phi_{\pm}$ is
\begin{align*}
	&\mathcal{T}_{\phi_{\pm}}=\left\{(\xi,\eta):\phi_{\pm}(\xi,\eta)=0\right\},\\
	&\mathcal{S}_{\phi_{\pm}}=\left\{(\xi,\eta):\nabla_{\eta} \phi_{\pm}(\xi,\eta)=0\right\}=\left\{2\xi-2|\eta|\pm |\alpha|=0\right\},\\
	&\mathcal{R}_{\phi_{\pm}}=\left\{(\xi,\eta):\mathcal{T}_{\phi_{\pm}}\cap \mathcal{S}_{\phi_{\pm}} \right\}=\left\{(\xi,\eta):\eta=0, |\xi|=\frac{1}{2}\alpha\right\}.
\end{align*}
The space--time resonance set about $\rho_{\pm}$ is
\begin{align*}
		&\mathcal{T}_{\rho_{\pm}}=\left\{(\xi,\eta):\rho_{\pm}(\xi,\eta)=0\right\},\\
		&\mathcal{S}_{\rho_{\pm}}=\left\{(\xi,\eta):\nabla_{\eta} \rho_{\pm}(\xi,\eta)=0\right\}=\left\{2\xi-2|\eta|\mp \beta \sqrt{{\eta}^{2}+1}\mp \beta |\eta|\frac{2\eta}{\sqrt{{\eta}^{2}+1}}=0\right\},\\
		&\mathcal{R}_{\rho_{\pm}}=\left\{(\xi,\eta):\mathcal{T}_{\rho_{\pm}}\cap \mathcal{S}_{\rho_{\pm}} \right\}=\left\{(\xi,\eta):\eta=0, |\xi|=\frac{1}{2}\beta\right\}.
\end{align*}
The space--time resonance set  about $\psi_{\pm}$ is
\begin{align*}
	&\mathcal{T}_{\psi_{\pm}}=\left\{(\xi,\eta):\psi_{\pm}(\xi,\eta)=0\right\},\\
	&\mathcal{S}_{\psi_{\pm}}=\left\{(\xi,\eta):\nabla_{\eta} \psi_{\pm}(\xi,\eta)=0\right\}=\left\{2\xi=0\right\},\\
	&\mathcal{R}_{\psi_{\pm}}=\left\{(\xi,\eta):\mathcal{T}_{\psi_{\pm}}\cap \mathcal{S}_{\psi_{\pm}} \right\}=\left\{(\xi,\eta):\eta=0, \xi=0\right\}.
\end{align*}
The space--time resonance set  about $\gamma_{\pm}$ is
\begin{align*}
		&\mathcal{T}_{\gamma_{\pm}}=\left\{(\xi,\eta):\gamma_{\pm}(\xi,\eta)=0\right\},\\
		&\mathcal{S}_{\gamma_{\pm}}=\left\{(\xi,\eta):\nabla_{\eta} \gamma_{\pm}(\xi,\eta)=0\right\}=\left\{2\xi=0\right\},\\
		&\mathcal{R}_{\gamma_{\pm}}=\left\{(\xi,\eta):\mathcal{T}_{\gamma_{\pm}}\cap \mathcal{S}_{\gamma_{\pm}} \right\}=\left\{(\xi,\eta):\eta=0, \xi=0\right\}.
\end{align*}
\subsection{Resolution space}
We define the resolution space $X$ by
\begin{align}\label{4.21}
\begin{split}
  &\big\|(\mathcal{E}=e^{-it \Delta}f,N_{\pm}=e^{{\mp}it|\alpha\nabla|}g_{\pm},M_{\pm}=e^{{\mp}it\beta(\Delta^{2}-\Delta)^{1/2}}h_{\pm})\big\|_{X}=\\& \underset{t}{sup} \Big(t^{-\delta}\|f(t)\|_{H^{N+1}}
	+t^{-\delta}\|xf(t)\|_{L^{2}}	+t^{-1+2\alpha+\delta}\|x^{2}f(t)\|_{L^{2}}+t^{1+\alpha}\big\|e^{it\Delta}f(t)\big\|_{L^{\infty}}\\
	&\quad+\|g_{\pm}(t)\|_{H^{N}}+\|xg_{\pm}(t)\|_{H^{1}}	+t^{-1+3\alpha}\|\wedge x^{2}g_{\pm}(t)\|_{L^{2}}+t\big\|e^{{\mp}it|\alpha\nabla|}g_{\pm}(t)\big\|_{B_{\infty,1}^{0}}\\
    &\quad+\|h_{\pm}(t)\|_{H^{N-1}}+\|xh_{\pm}(t)\|_{H^{1}}	+t^{-1+3\alpha}\|\wedge x^{2}h_{\pm}(t)\|_{L^{2}}+t\big\|e^{{\mp}it\beta(\Delta^{2}-\Delta)^{1/2}}h_{\pm}(t)\big\|_{L^{\infty}}\Big).
\end{split}
\end{align}
In the following, our goal is to  demonstrate that
\begin{align*}
\big\|e^{it \Delta}F_{\pm},e^{{\mp}it|\alpha\nabla|}G_{\pm},e^{{\mp}it\beta(\Delta^{2}-\Delta)^{1/2}}H_{\pm}\big\|_{X}\lesssim \|(\mathcal{E},N_{\pm},M_{\pm})\|_{X}^2,
\end{align*}
under  the hypothesis for initial datas in Theorem \ref{1.1}, that is
 \begin{align*}
 		\big\|e^{it \Delta}f(0),e^{{\mp}it|\alpha\nabla|}g(0),e^{{\mp}it\beta(\Delta^{2}-\Delta)^{1/2}}h(0)\big\|_{X}\lesssim \epsilon_{0}.
\end{align*}
 As a consequence, it implies the inequality
 \begin{align*}
		\big\|(\mathcal{E}=e^{it \Delta}f,N_{\pm}=e^{{\mp}it|\alpha\nabla|}g_{\pm},M_{\pm}=e^{{\mp}it\beta(\Delta^{2}-\Delta)^{1/2}}h_{\pm})\big\|_{X}\lesssim \epsilon_{0} +\|(\mathcal{E},N_{\pm},M_{\pm})\|_{X}^2.
\end{align*}
It is extended to a global result by the  priori estimates. Meanwhile, scattering result follows.

%
%
%
%
%

\section{Energy Estimates} \setcounter{section}{4}\setcounter{equation}{0} \label{sec1.1}

In this section we are going to prove the following:
\begin{proposition}\label{prop5.1}
	 $\|G\|_{H^{N}}+t^{-\delta}\|F\|_{H^{N+1}}+\|H\|_{H^{N-1}}\lesssim \|(\mathcal{E},N,M)\|_{X}^2 $.
\end{proposition}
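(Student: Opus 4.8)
The plan is to estimate the three Duhamel terms $G=G_\pm$, $F=F_{1\pm}+F_{2\pm}$, and $H=H_\pm$ separately in their respective Sobolev norms, using Lemma~\ref{lem2.4} to reduce each bilinear integral to a product of an $L^\infty$ norm (controlled by the dispersive decay built into the $X$-norm) and an $L^2$ norm (controlled by the Sobolev part of the $X$-norm). First I would treat $G$. Writing $\widehat G_\pm(t,\xi)=\int_0^t\int |\alpha\xi|e^{is\psi_\pm(\xi,\eta)}\widehat f(\xi-\eta,s)\overline{\widehat f}(\eta,s)\,d\eta\,ds$, I distribute $\wedge^N$ onto the two copies of $f$ via the Leibniz rule for the symbol $|\alpha\xi|\lesssim|\alpha(\xi-\eta)|+|\alpha\eta|$, so that one factor carries $N$ derivatives (estimated in $L^2$ by $\|f(s)\|_{H^{N+1}}\lesssim s^\delta\|(\mathcal{E},N,M)\|_X$) and the other is put in $L^\infty$ via $\|e^{is\Delta}f(s)\|_{L^\infty}\lesssim s^{-1-\alpha}\|(\mathcal{E},N,M)\|_X$; Lemma~\ref{lem2.4} then gives the symbol bound since the remaining multiplier is smooth and homogeneous of low order. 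The time integral $\int_0^t s^{\delta-1-\alpha}\,ds$ converges because $\alpha=\tfrac16-2\delta>\delta$ for $\delta\ll1$, yielding $\|G\|_{H^N}\lesssim\|(\mathcal{E},N,M)\|_X^2$ uniformly in $t$.

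For $H$, the multiplier $\beta|\xi|(1+|\xi|^2)^{-1/2}|\xi|^2$ behaves like $|\xi|^3$ near the origin and like $|\xi|^2$ at infinity, so in the $H^{N-1}$ norm I lose at most two derivatives relative to $f$; distributing $\wedge^{N-1}$ as before and using $\|f(s)\|_{H^{N+1}}$ on one factor and the Schrödinger $L^\infty$ decay on the other again gives an integrable $\int_0^t s^{\delta-1-\alpha}\,ds$. The low-frequency part (where the symbol is $\sim|\xi|^3$) is harmless: Bernstein and the extra vanishing only help. For $F=F_{1\pm}+F_{2\pm}$, the norm is $H^{N+1}$ but we are allowed the growth factor $t^\delta$. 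In $F_{1\pm}$ I put $g_\pm$ (which sits in $H^N$, hence is as regular as we need after the trivial symbol) in $L^\infty$ using the wave decay $\|e^{\mp is|\alpha\nabla|}g_\pm(s)\|_{B^0_{\infty,1}}\lesssim s^{-1}\|(\mathcal{E},N,M)\|_X$, and $f$ in $L^2$ with $N+1$ derivatives; the time integral is $\int_0^t s^{\delta-1}\,ds\lesssim t^\delta$, which is exactly the allowed growth. Similarly in $F_{2\pm}$ I use $\|e^{\mp is\beta(\Delta^2-\Delta)^{1/2}}h_\pm(s)\|_{L^\infty}\lesssim s^{-1}\|(\mathcal{E},N,M)\|_X$ from Lemma~\ref{lem2.7} (or Lemma~\ref{lem7.1}) and the same $L^2$ bound on $f$, and the two copies of $\wedge^{N+1}$ are handled by the Leibniz-type splitting, noting $h_\pm\in H^{N-1}$ suffices because $N+1=(N-1)+2$ and the associated symbol contributes no growth.

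The main obstacle is the derivative bookkeeping in $F_{1\pm}$ and $F_{2\pm}$: when $\wedge^{N+1}$ lands on the $g_\pm$ (resp. $h_\pm$) factor we only have $H^N$ (resp. $H^{N-1}$) control on it, so one must verify that in that regime enough derivatives can instead be moved onto $f$ (which enjoys $H^{N+1}$), i.e. one exploits the frequency localization $|\eta|\lesssim|\xi-\eta|$ versus $|\xi-\eta|\lesssim|\eta|$ and the boundedness of Littlewood--Paley projections; in the ``bad'' region one puts the low-frequency factor in $L^\infty$ and the high-frequency one in $L^2$. A secondary point is that in $F_{2\pm}$ the symbol from the magnetic nonlinearity has a $|\eta|^2$ prefactor, so one must check it is absorbed by $h_\pm\in H^{N-1}$ and does not cost the extra two derivatives at high frequency — this is where the specific relation $N+1=(N-1)+2$ is used. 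Once these splittings are set up, the remaining estimates are routine applications of Lemma~\ref{lem2.4}, Lemma~\ref{lem2.5}, and the elementary time integrals above, all of which converge (or grow at most like $t^\delta$) precisely under the parameter choice \eqref{1.9}.
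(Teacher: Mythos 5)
Your treatment of $G$, $H$, and $F_{1}$ matches the paper's approach (Lemma~\ref{lem5.1} for $G$ and $F_1$ by reference to \cite{ZFJ}, and a direct H\"older/Lemma~\ref{lem2.4} argument for $H$), and those parts are fine. But your treatment of $F_{2}$ has a genuine gap. You assert that ``in $F_{2\pm}$ the symbol from the magnetic nonlinearity has a $|\eta|^{2}$ prefactor, so one must check it is absorbed by $h_\pm\in H^{N-1}$ and does not cost the extra two derivatives'' — this is wrong. Looking at \eqref{4.8}, $F_{2\pm}$ has \emph{no} frequency prefactor at all; the multiplier is just $e^{is\rho_\pm(\xi,\eta)}$. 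The $|\xi|^2$ prefactor you are thinking of appears in the definition of $H_\pm$ in \eqref{4.10}, which is why the identity $N+1=(N-1)+2$ saves the $H$-estimate, but it is of no help for $F_2$.

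Concretely, in the High~$\times$~High~$\rightarrow$~Low region ($|\eta|\gg|\xi-\eta|$, so $|\xi|\sim|\eta|$), distributing $\wedge^{N+1}$ in $F_2$ forces $N+1$ derivatives onto $h$, while the $X$-norm only controls $\|h\|_{H^{N-1}}$; putting the low-frequency factor $f$ in $L^\infty$ and $h$ in $L^2$ still leaves you two derivatives short, and the required multiplier $|\xi|^{N+1}/|\eta|^{N-1}\sim|\eta|^2$ is unbounded, so it is not admissible in Lemma~\ref{lem2.4} or Lemma~\ref{lem2.5}. This is precisely why the paper performs the normal form transformation for $F_{22}$: on the support of $X_2$ (and using $\beta\neq1$ so that $|\rho_\pm(\xi,\eta)|\gtrsim|\eta|^{2}$ when $|\eta|\ge100$), it integrates by parts in $s$, which brings down a factor $1/\rho_\pm\lesssim|\eta|^{-2}$ and recovers exactly the two missing derivatives, producing the boundary term \eqref{5.5} and the two bulk terms \eqref{5.6}--\eqref{5.7} with the now-bounded multiplier $X_2|\xi|^{N+1}/(|\eta|^{N-1}\rho)$ as in \eqref{5.9}. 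Without recognizing this resonance-driven integration by parts in time, your estimate of $\|F_2\|_{H^{N+1}}$ does not close.
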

\noindent {\bf Proof of Proposition \ref{prop5.1}}.\\
We then write $F=F_{1}+F_{2}$, $F_{1}$ and $F_{2}$ are defined in \eqref{4.7} and \eqref{4.8}. To prove this Proposition \ref{prop5.1}, we first prove the following lemma.
\begin{lemma}\label{lem5.1}
Let space X be defined in \eqref{4.21},  we have
\begin{align*}
\|G\|_{H^{N}}+t^{-\delta}\|F_{1}\|_{H^{N+1}}\lesssim  \|(\mathcal{E},N,M)\|_{X}^{2}.
\end{align*}
\end{lemma}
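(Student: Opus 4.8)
Both $G_\pm$ and $F_{1\pm}$ are, after unwinding the profiles in \eqref{4.5}--\eqref{4.7}, ordinary Duhamel integrals in physical space: writing $\mathcal E=e^{is\Delta}f$ and $N_\pm=e^{\mp is|\alpha\nabla|}g_\pm$, one has $G_\pm(t)=\frac1i\int_0^t e^{\pm is|\alpha\nabla|}|\alpha\nabla|\big(|\mathcal E(s)|^2\big)\,ds$ and $F_{1\pm}(t)=\frac1{2i}\int_0^t e^{-is\Delta}\big(\mathcal E(s)N_\pm(s)\big)\,ds$. Since $e^{\pm is|\alpha\nabla|}$ and $e^{is\Delta}$ are Fourier multipliers of modulus one, hence isometries on every $H^\sigma$, we get $\|G_\pm(t)\|_{H^N}\le\int_0^t\big\||\mathcal E(s)|^2\big\|_{H^{N+1}}\,ds$ and $\|F_{1\pm}(t)\|_{H^{N+1}}\le\int_0^t\|\mathcal E(s)N_\pm(s)\|_{H^{N+1}}\,ds$, so everything reduces to bounding the integrands and checking time integrability. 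For $s\ge1$ I use the bounds packed into $X$, namely $\|f\|_{H^{N+1}}\lesssim s^{\delta}\|(\mathcal E,N,M)\|_X$, $\|\mathcal E\|_{L^\infty}=\|e^{is\Delta}f\|_{L^\infty}\lesssim s^{-1-\alpha}\|(\mathcal E,N,M)\|_X$, $\|N_\pm\|_{H^N}\lesssim\|(\mathcal E,N,M)\|_X$ and $\|N_\pm\|_{L^\infty}\lesssim\|N_\pm\|_{B^0_{\infty,1}}\lesssim s^{-1}\|(\mathcal E,N,M)\|_X$; for $s\le1$ I use the crude Sobolev bounds ($\|\mathcal E\|_{L^\infty}\lesssim\|\mathcal E\|_{H^2}\lesssim\|(\mathcal E,N,M)\|_X$, etc.), which make the short-time part of every integral below trivially $O(\|(\mathcal E,N,M)\|_X^2)$.

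\textbf{The $G$-estimate.} By the fractional Leibniz rule in $\mathbb R^3$ with $N+1>3/2$ (cf.\ Lemma \ref{lem2.5}), $\||\mathcal E|^2\|_{H^{N+1}}\lesssim\|\mathcal E\|_{H^{N+1}}\|\mathcal E\|_{L^\infty}=\|f\|_{H^{N+1}}\|\mathcal E\|_{L^\infty}\lesssim s^{\delta-1-\alpha}\|(\mathcal E,N,M)\|_X^2$ for $s\ge1$. As $\alpha-\delta=\tfrac16-3\delta>0$ for $\delta\ll1$, we have $\int_1^\infty s^{\delta-1-\alpha}\,ds<\infty$, and combined with the short-time bound this gives $\|G\|_{H^N}\lesssim\|(\mathcal E,N,M)\|_X^2$ uniformly in $t$.

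\textbf{The $F_1$-estimate.} The one genuine difficulty is that $X$ controls $N_\pm$ only in $H^N$ while we want $F_1\in H^{N+1}$, so the naive split $\|\mathcal E N_\pm\|_{H^{N+1}}\lesssim\|\mathcal E\|_{H^{N+1}}\|N_\pm\|_{L^\infty}+\|\mathcal E\|_{L^\infty}\|N_\pm\|_{H^{N+1}}$ fails on the last term. I would cut the bilinear symbol by a smooth partition $1=\chi_{\mathrm{lo}}+\chi_{\mathrm{hi}}$ with $\chi_{\mathrm{lo}}$ supported in $\{|\eta|\ge 2^{10}(1+|\xi-\eta|)\}$. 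On $\mathrm{supp}\,\chi_{\mathrm{hi}}$ one has $\langle\xi\rangle\lesssim\langle\xi-\eta\rangle$, so all $N+1$ derivatives may be placed on $\mathcal E$ and this contribution is $\lesssim\int_0^t\|\mathcal E\|_{H^{N+1}}\|N_\pm\|_{L^\infty}\,ds\lesssim\int_1^t s^{\delta-1}\,ds\,\|(\mathcal E,N,M)\|_X^2\lesssim t^{\delta}\|(\mathcal E,N,M)\|_X^2$, exactly the permitted loss. On $\mathrm{supp}\,\chi_{\mathrm{lo}}$ one has $|\xi|\sim|\eta|\gg|\xi-\eta|$, and $\phi_\pm(\xi,\eta)=|\eta|^2+2(\xi-\eta)\cdot\eta\pm\alpha|\eta|$ is elliptic, $|\phi_\pm|\gtrsim|\eta|^2$ (using $|\eta|\gtrsim\alpha$ and $|\xi-\eta|\ll|\eta|$ there); integrating by parts once in $s$ in \eqref{4.7} (the time–non-resonant regime) produces boundary terms carrying the symbol $\chi_{\mathrm{lo}}/\phi_\pm$, which is of order $-2$ and hence absorbs two derivatives, turning the $\dot H^{N+1}$-weight into a $\dot H^{N-1}$-weight: the $s=t$ term is $\lesssim\|\mathcal E(t)\|_{L^\infty}\|N_\pm(t)\|_{\dot H^{N-1}}\lesssim t^{-1-\alpha}\|(\mathcal E,N,M)\|_X^2$, the $s=0$ term is $\lesssim\|\mathcal E_0\|_{L^\infty}\|N(0)\|_{\dot H^{N-1}}\lesssim\epsilon_0^2$, and the remaining time integral has a cubic integrand (since $\partial_s f$ and $\partial_s g_\pm$ are the quadratic right-hand sides of \eqref{3.1}) which, with the extra $|\eta|^{-2}$ and the $X$-bounds, is $\lesssim t^{\delta}\|(\mathcal E,N,M)\|_X^3\lesssim t^{\delta}\|(\mathcal E,N,M)\|_X^2$. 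Summing the pieces gives $t^{-\delta}\|F_1\|_{H^{N+1}}\lesssim\|(\mathcal E,N,M)\|_X^2$, which together with the $G$-estimate is the claim.

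\textbf{Main obstacle.} The hard point is precisely the derivative mismatch in $F_1$; the rest is a routine Duhamel–plus–Leibniz computation. Two things require care: first, verifying that $\chi_{\mathrm{lo}}$ really isolates the region where $\phi_\pm$ is elliptic of size $\sim|\eta|^2$, so that $\chi_{\mathrm{lo}}/\phi_\pm$ is a bona fide smoothing symbol and the integration by parts is legitimate; second, the $s$-integrability near $s=0$, where the $X$-decay bounds degenerate — this is disposed of by using the Sobolev bounds on $[0,1]$, equivalently by replacing $s$ with $\langle s\rangle$ throughout, or by proving the estimate for $t\ge1$ and handling $t\le1$ via the local theory.
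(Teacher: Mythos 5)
Your proof is correct and follows essentially the same route as the paper's: the paper delegates Lemma~\ref{lem5.1} to \cite{ZFJ}, but its own treatment of the parallel terms $F_2$ and $H$ (the high/low cutoff $X_1+X_2$, putting all derivatives on $f$ in the High$\times$Low region, and integrating by parts in $s$ on the support of $X_2$ where the phase is elliptic of size $|\eta|^2$, cf.\ \eqref{5.1}--\eqref{5.9}) is exactly your $\chi_{\mathrm{hi}}/\chi_{\mathrm{lo}}$ split and time-nonresonant integration by parts, and your $G$-bound matches the paper's one-line Hölder--Leibniz argument for $H$. The only small caveats are that the step $\|(\mathcal E,N,M)\|_X^3\lesssim\|(\mathcal E,N,M)\|_X^2$ tacitly uses the bootstrap smallness of the $X$-norm, and that to invoke Lemma~\ref{lem2.4} one should also note that $\partial_\xi$ of the symbol $\chi_{\mathrm{lo}}\langle\xi\rangle^{N+1}/\bigl(\phi_\pm|\eta|^{N-1}\bigr)$ stays bounded on the support of $\chi_{\mathrm{lo}}$, as the paper does in \eqref{5.9} for $\rho_\pm$.
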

The proof of Lemma \ref{lem5.1} can be found in \cite{ZFJ} in Section 4.

{\bf Estimate of $\|F_2\|_{H^{N+1}}\lesssim t^{\delta}\|(\mathcal{E},N,M)\|_{X}^{2} $:} \\
Let us define smooth positive radial cutoff functions $X_{1}$ and $X_{2}$, with
$X_{1} +X_{2}=1$,
\begin{equation*}
		\left\{\!\!
		\begin{array}{lc}
		X_{2} =1,\quad if\quad  100|\xi-\eta|\le |\eta|,
		&\\ X_{2} =0,\quad if\quad  |\eta|\le 50|\xi-\eta|. &
		\end{array}
		\right.
\end{equation*}
We then write $F_{2}=F_{21}+F_{22}$. Define
\begin{align}
&F_{21}(t,\xi)=\mathcal{F}^{-1}\frac{1}{2}\int_{0}^{t}\int_{R^{3}}X_{1}e^{is\rho_{\pm}(\xi,\eta)}\widehat f(\xi-\eta,s)\widehat h(\eta,s)d\eta ds, \label{5.1}\\
&F_{22}(t,\xi)=\mathcal{F}^{-1}\frac{1}{2}\int_{0}^{t}\int_{R^{3}}X_{2}e^{is\rho_{\pm}(\xi,\eta)}\widehat f(\xi-\eta,s)\widehat h(\eta,s)d\eta ds.\label{5.2}
\end{align}
For the estimate of $F_{21}$, on the support of $X_{1}$, we have $|\xi-\eta|\gtrsim|\eta|$. It's High $\times$ Low $\rightarrow$ High case which hence derivatives applied to $F_{21}$  fall only on $f= e^{-it \Delta}\mathcal{E}$. By lemma \ref{lem2.4}, we have
\begin{align*}
	\begin{split}
		\|F_{21}\|_{H^{N+1}}		
		&\lesssim \int_{0}^{t}\Big\|X_{1}\frac{|\xi|^{N+1}}{|\xi-\eta|^{N+1}}\Big\|_{L_{\eta}^{\infty}H_{\xi}^{1}}\|\mathcal{E}\|_{H^{N+1}}\|M\|_{L^{\infty}}ds\\
		&\lesssim \int_{0}^{t} s^{\delta}\frac{1}{{\left\langle s \right\rangle}}ds\|(\mathcal{E},N,M)\|_{X}^2\\
&\lesssim t^{\delta}\|(\mathcal{E},N,M)\|_{X}^2,
	\end{split}
\end{align*}
here we use the fact that  $\partial_{\xi}(X_{1}\frac{|\xi|^{N+1}}{|\xi-\eta|^{N+1}})\lesssim C$. For the estimate of $F_{22}$, we can reduce ourselves to the case $|\eta|\ge 100$. Then we observe that on the suppport of $X_{2}$, when $\beta \neq 1$, the phase $\phi_{\pm}(\xi,\eta)$  satisfies the following:
\begin{align}
	|\rho_{\pm}(\xi,\eta)|\ge|\xi|^2 -|\xi-\eta|^{2}\pm \beta|\eta|\sqrt{{\eta}^{2}+1}\gtrsim |\eta|^{2}.
\end{align}
 It's High $\times$ High $\rightarrow$ Low case which hence derivatives applied to $F_{22}$  fall only on $h=e^{it\beta(\Delta^{2}-\Delta)^{1/2}}M_{\pm}$. Because of the mass conservation  of the magetic Zakharov system,  so $\|F\|_{L^{2}}\lesssim 1$. As a consequence,   we just need to consider $\|F_{22}\|_{H^{N+1}}$. Since $\|F_{22}\|_{H^{N+1}}=\|\Lambda^{N+1}F_{22}\|_{L^{2}}$, we have
\begin{align}
	\Lambda^{N+1} F_{22}(t,x)=\mathcal{F}^{-1}\int_{0}^{t}\int_{R^{3}}|\xi|^{N+1}X_{2}e^{is\rho(\xi,\eta)}\widehat f(\xi-\eta,s)\widehat h(\eta,s)d\eta ds.
\end{align}
Integrate by parts for s, we have
\begin{align}
	\Lambda^{N+1} F_{22}(t,x)
	&=\mathcal{F}^{-1}\int_{R^{3}}\frac {X_{2}|\xi|^{N+1}}{i|\eta|^{N-1}\rho(\xi,\eta)}e^{is\rho(\xi,\eta)}\widehat f(\xi-\eta,s)|\eta|^{N-1}\widehat h(\eta,s)d\eta \Big|_{0}^{t},\label{5.5} \\
	&-\mathcal{F}^{-1}\int_{R^{3}}\int_{0}^{t}\frac {X_{2}|\xi|^{N+1}}{i|\eta|^{N-1}\rho(\xi,\eta)}e^{is\rho(\xi,\eta)}\partial_{s}\widehat f(\xi-\eta,s)|\eta|^{N-1}\widehat h(\eta,s)dsd\eta , \label{5.6}\\
	&-\mathcal{F}^{-1}\int_{R^{3}}\int_{0}^{t}\frac {X_{2}|\xi|^{N+1}}{i|\eta|^{N-1}\rho(\xi,\eta)}e^{is\rho(\xi,\eta)}\widehat f(\xi-\eta,s)|\eta|^{N-1}\partial_{s}\widehat h(\eta,s)dsd\eta ,\label{5.7}
\end{align}
 on the support of $X_{2}$, we have
\begin{align}
	\bigg|\frac {X_{2}|\xi|^{N+1}}{i|\eta|^{N-1}\rho(\xi,\eta)}\bigg|\lesssim \bigg|\frac{|\xi|^{N+1}}{|\eta|^{N-1}|\eta|^{2}}\bigg|\lesssim 1,\quad\quad
	\partial_{\xi}\Big(\frac {X_{2}|\xi|^{N+1}}{i|\eta|^{N-1}\rho(\xi,\eta)}\Big)\lesssim C.\label{5.9}
\end{align}
For the estimate of  \eqref{5.5}, by lemma \ref{lem2.4} and  \eqref{5.9}, we have
\begin{align*}
		\|\eqref{5.5}\|_{L^{2}}
		&\lesssim \Big\|\frac {X_{2}|\xi|^{N+1}}{i|\eta|^{N-1}\rho(\xi,\eta)}\Big\|_{L_{\eta}^{\infty}H_{\xi}^{1}}\|\mathcal{E}\|_{L^{\infty}} \Big\|\Lambda^{N-1}e^{\mp it\beta(\Delta^{2}-\Delta)^{1/2}}h\Big\|_{L^{2}}\\
		&\lesssim \|\mathcal{E}\|_{L^{\infty}} \|M\|_{H^{N-1}}\\
&\lesssim t^{\delta}\|(\mathcal{E},N,M)\|_{X}^2  .
\end{align*}
For the estimate of \eqref{5.6}, from   $\mathcal{E}N+\mathcal{E}M= e^{it \Delta}\partial_{s}f$, Product laws, lemma \ref{lem2.4} and \eqref{5.9},  we obtain
 \begin{align*}
 		\|\eqref{5.6}\|_{L^{2}}					
 		&\lesssim \int_{0}^{t}\Big\|\frac {X_{2}|\xi|^{N+1}}{i|\eta|^{N-1}\rho(\xi,\eta)}\Big\|_{L_{\eta}^{\infty}H_{\xi}^{1}}\big\|e^{is \Delta}\partial_{s} f\big\|_{L^{\infty}}\Big\||\eta|^{N-1} e^{\mp it\beta(\Delta^{2}-\Delta)^{1/2}}h\Big\|_{L^{2}}ds\\
 		&\lesssim \int_{0}^{t}\|\mathcal{E}N+\mathcal{E}M\|_{L^{\infty}}\| M\|_{H^{N-1}}ds\\&\lesssim t^{\delta}\|(\mathcal{E},N,M)\|_{X}^{2}.	
 \end{align*}
Analogously, since $ \beta(-\Delta)^{1/2}(I-\Delta)^{-1/2}\nabla\times(\nabla\times(\mathcal{E}\times
\overline {\mathcal{E}}))=e^{i\beta(\Delta^{2}-\Delta)^{1/2}t}\partial_{s}h$, we have
\begin{align*}
		\|\eqref{5.7}\|_{L^{2}}
&\lesssim \int_{0}^{t}\Big\|\frac {X_{2}|\xi|^{N+1}}{i|\eta|^{N-1}\rho(\xi,\eta)}\Big\|_{L_{\eta}^{\infty}H_{\xi}^{1}}\big\|e^{is \Delta}\partial_{s} f\big\|_{L^{\infty}}\Big\||\eta|^{N-1} e^{\mp it\beta(\Delta^{2}-\Delta)^{1/2}}h\Big\|_{L^{2}}ds\\
		&\lesssim  \int_{0}^{t}\|\mathcal{E}\|_{L^{\infty}}\|\mathcal{E}\|_{H^{N+1}}\|\mathcal{E}\|_{L^{\infty}}ds\\&\lesssim t^{\delta}\|(\mathcal{E},N,M)\|_{X}^{2}.	
\end{align*}

{\bf Estimate of $\|H\|_{H^{N-1}}\lesssim \|(u,N,M)\|_{X}^{2}	 $:} \\
$H$ is defined in \eqref{4.10}. Just from H$\ddot{o}$lder{'}s inequality and lemma \ref{lem2.4},  we have
\begin{align*}
		\|H\|_{H^{N-1}}&\lesssim \int_{0}^{t}\Big\|\frac{|\xi|}{(1+|\xi|^{2})^{1/2}}\Big\|_{L_{\eta}^{\infty}H_{\xi}^{1}}\big\|e^{is \Delta} f\big\|_{L^{\infty}}\Big\||\eta|^{N+1} e^{is \Delta}f\Big\|_{L^{2}}ds\\		
		&\lesssim\int_{0}^{t} \|\mathcal{E}\|_{H^{N+1}}\|\mathcal{E}\|_{L^{\infty}}ds\\&\lesssim\|(\mathcal{E},N,M)\|_{X}^{2}.
\end{align*}
From lemma \ref{lem5.1} and the estimates of \eqref{5.1},\eqref{5.5}--\eqref{5.7},  the proof of Proposition \ref{prop5.1} is completed.

$\hfill\Box$\\
\noindent {\bf High frequency cutoff} \\
 We have established the priori
estimates $\|\mathcal{E}\|_{H^{N+1}}\lesssim t^{\delta}$, $\|N\|_{H^{N}}\lesssim 1$, $\|M\|_{H^{N-1}}\lesssim 1$. Let us denote by $P_{\ge k} $ the Littlewood--Paley projection on frequencies larger or equal to
$2^{k}$. Since for $k\ge 0$ one has then, for frequencies $\xi=2^{k}\gtrsim s^{p}$,  according to  Littlewood--Paley theory, we have
\begin{align}
	&\big\|P_{\ge k}\mathcal{E}(s)\big\|_{H_{3}}\lesssim \frac{1}{{\left\langle s \right\rangle}^2}s^{\delta},\\
	&\big\|P_{\ge k}N(s)\big\|_{H_{2}}\lesssim \frac{1}{{\left\langle s \right\rangle}^2},\\
	&\big\|P_{\ge k}M(s)\big\|_{H_{1}}\lesssim \frac{1}{{\left\langle s \right\rangle}^2}.
\end{align}
Thus, we choose $p(N-2)=2$, so $P=\frac{2}{N-2}$. We define: $\delta_{N}=\frac{2}{N-2}$. In the rest of the paper, we assume that all frequencies $|\xi-\eta|$ and $|\eta|$ appearing in the estimates of the bilinear terms, are bounded above by $s^{\delta_{N}} $, where $s^{\delta_{N}}:=\frac{2}{N-2}$ and the integer $N\ge 1$ is determined in the course of our proof
by several upperbounds on $s^{\delta_{N} } $. In particular, expressions such as $|\xi|$ and  $\nabla_{\xi}\psi (\xi,\eta)$, $\nabla_{\xi}\gamma (\xi,\eta)$ will
be often replaced by a factor of $s^{\delta_{N} } $.

%
%
%
%
%

\section{Weighted Estimates} \setcounter{section}{5}\setcounter{equation}{0}

In this section we are going to prove the following:
\begin{proposition}\label{prop6.1}
	$\|xG\|_{H^{1}}+t^{-1+3\alpha}\|\wedge x^{2}G\|_{L^{2}}\lesssim \|(\mathcal{E},N,M)\|_{X}^2 $.
\end{proposition}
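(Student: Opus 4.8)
The plan is to estimate $xG$ and $\wedge x^2 G$ by commuting the weight $x$ (which becomes $\nabla_\xi$ on the Fourier side) through the bilinear Duhamel integral \eqref{4.9} defining $G$, and then to split according to where the $\xi$-derivatives land: on the symbol $|\alpha\xi|$, on the phase $e^{is\psi_{\pm}(\xi,\eta)}$, or on the profile $\widehat f(\xi-\eta,s)$. Recall from \eqref{4.13} that $\psi_{\pm}(\xi,\eta)=\mp|\alpha\xi|-|\xi|^2+2\xi\eta$, so $\nabla_\xi\psi_{\pm}=\mp\alpha\frac{\xi}{|\xi|}-2\xi+2\eta$; on the relevant frequency range $|\xi|,|\eta|\lesssim s^{\delta_N}$ this is bounded by $s^{\delta_N}$, which is the key gain that makes the "phase-derivative" term manageable. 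The term where $\nabla_\xi$ hits the profile produces $\widehat{xf}(\xi-\eta,s)$, which is controlled directly by the $\|xf\|_{L^2}\lesssim s^\delta$ piece of the $X$-norm after invoking Lemma \ref{lem2.4} with the remaining smooth symbol and the dispersive bound $\|e^{is\Delta}f\|_{L^\infty}\lesssim s^{-1-\alpha}$ on the other factor.

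For the first weighted estimate $\|xG\|_{H^1}$, I would carry out the differentiation $\nabla_\xi\big[(1+|\xi|^2)^{1/2}\text{-type multiplier}\big]$—here just $|\alpha\xi|$ and its one extra $H^1_\xi$ derivative from Lemma \ref{lem2.4}—and bound the three resulting pieces by
$\int_0^t \big(\text{smooth symbol in } L^\infty_\eta H^1_\xi\big)\cdot\|e^{is\Delta}f\|_{L^\infty}\cdot\big(\|f\|_{H^{N}}+\|xf\|_{L^2}+s\,\|\nabla_\xi\psi\|\cdot\|f\|_{L^2}\big)\,ds$,
where the time integrand decays like $s^{-1-\alpha+\delta+\delta_N}$, which is integrable for the parameter choices in \eqref{1.9} (and $\delta_N=2/(N-2)$ small), giving a uniform-in-$t$ bound. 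The singular factor $|\alpha\xi|$ at $\xi=0$ combined with one $\xi$-derivative is harmless since it only lowers homogeneity and the frequencies are cut off, so the $H^1_\xi$ norm of the symbol is finite.

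For the second, harder, estimate $\|\wedge x^2 G\|_{L^2}\lesssim t^{1-3\alpha}\|(\mathcal E,N,M)\|_X^2$, I would apply $\nabla_\xi$ twice, producing a sum over how the two derivatives distribute among symbol, phase and profile. The genuinely new and most delicate terms are: (i) two derivatives on the profile, giving $\widehat{x^2 f}(\xi-\eta,s)$, controlled by the $t^{1-2\alpha+\delta}$-growth of $\|x^2 f\|_{L^2}$ in the $X$-norm; (ii) one derivative on the phase and one on the profile, producing a factor $s\,\nabla_\xi\psi_\pm\,\widehat{xf}$, bounded by $s^{1+\delta_N}\cdot s^{\delta}$; and (iii) two derivatives on the phase, producing $s^2(\nabla_\xi\psi_\pm)^2$ plus $s\,\nabla_\xi^2\psi_\pm$ times $\widehat f$, where $\nabla_\xi^2\psi_\pm$ contains the term $\mp\alpha\,\nabla_\xi(\xi/|\xi|)$ which is singular like $|\xi|^{-1}$ near the origin—this is the main obstacle. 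I expect to handle (iii) either by a low-frequency/high-frequency split in $\xi$ near the space resonance $\mathcal S_{\psi_\pm}=\{\xi=0\}$ (on $|\xi|\lesssim s^{-\theta}$ one uses the volume of the ball to absorb the $|\xi|^{-1}$ singularity, while on $|\xi|\gtrsim s^{-\theta}$ the singularity is tame), or by integrating by parts once more in $s$ using $e^{is\psi_\pm}=\frac{1}{i\psi_\pm}\partial_s e^{is\psi_\pm}$ away from the time resonance—exactly as in the high$\times$high analysis of Section 4. After these splittings, each piece is estimated via Lemma \ref{lem2.4} and the dispersive estimates \eqref{2.1}–\eqref{2.2}, and the time integrals all close with at most a factor $t^{1-3\alpha}$, matching the claimed bound. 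The boundary terms from the $s$-integration by parts are estimated at $s=t$ using $\|x^2 f(t)\|_{L^2}\lesssim t^{1-2\alpha+\delta}$ and at $s=0$ using the initial data hypotheses \eqref{1.7}.
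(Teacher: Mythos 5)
There is a genuine gap here, and it is the central mechanism of the proof. When $\nabla_\xi$ falls on the oscillatory phase it produces a factor of $is\,\nabla_\xi\psi_\pm$, and $\nabla_\xi\psi_\pm=\mp\alpha\frac{\xi}{|\xi|}-2\xi+2\eta$ is \emph{not} bounded by $s^{\delta_N}$: the term $\mp\alpha\xi/|\xi|$ has fixed magnitude $\alpha$ (the ion sound speed), so $|\nabla_\xi\psi_\pm|\sim 1$. With your direct estimate, the phase-derivative piece of $\|xG\|_{H^1}$ then carries an uncancelled factor of $s$ in the integrand, and after the dispersive bound $\|e^{is\Delta}f\|_{L^\infty}\lesssim s^{-1-\alpha}$ you get something like $\int_0^t s\cdot s^{\delta}\cdot s^{-1-\alpha}\,ds\sim t^{1+\delta-\alpha}$, which is not $O(1)$. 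Your stated decay rate $s^{-1-\alpha+\delta+\delta_N}$ silently drops that factor of $s$, and the estimate does not close. For the $\wedge x^2 G$ bound the situation is worse, with an $s^2$ to contend with.

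The argument that actually works (the paper carries it out explicitly for $H$ in Proposition~\ref{prop6.2}, and defers $G$ to \cite{ZFJ} which does the same) hinges on the algebraic identity $\nabla_\eta\psi_\pm=2\xi$ (cf.\ \eqref{6.1} for $\gamma$). The factor $|\alpha\xi|$ in the $G$-symbol of \eqref{4.9} is precisely (up to direction) $\tfrac{\alpha}{2}\nabla_\eta\psi_\pm$, so one writes $is\nabla_\eta\psi_\pm\,e^{is\psi_\pm}=\nabla_\eta e^{is\psi_\pm}$ and integrates by parts in $\eta$. This \emph{removes} the explicit power of $s$, at the price of putting $\nabla_\eta$ on the profiles, i.e.\ producing $\widehat{xf}$ or $\widehat{x^2f}$—which the $X$-norm controls. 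For $\wedge x^2 G$ the extra $\wedge$ supplies a second factor of $|\xi|$, enabling a second $\eta$-integration by parts that kills $s^2$; compare the chain \eqref{6.9}$\to$\eqref{6.14}--\eqref{6.19} for $H$. The dyadic split near $\xi=0$ and the time-integration by parts you propose may tame the $\nabla_\xi(\xi/|\xi|)\sim|\xi|^{-1}$ singularity, but even absent that singularity your estimates would fail from the uncancelled powers of $s$; the $\eta$-integration by parts via $|\xi|\sim|\nabla_\eta\psi_\pm|$ is not an optional refinement but the indispensable step.
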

For a proof of this proposition, we refer to \cite{ZFJ}.
\begin{proposition}\label{prop6.2}
	$\|xH\|_{H^{1}}+t^{-1+3\alpha}\|\wedge x^{2}H\|_{L^{2}}\lesssim \|(\mathcal{E},N,M)\|_{X}^2 $.
\end{proposition}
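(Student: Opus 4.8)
The plan is to estimate $xH$ and $\wedge x^2 H$ by moving the position-space weight $x$ onto the Fourier side as a derivative $\nabla_\xi$, applied to the oscillatory integral defining $H$ in \eqref{4.10}. Recall $H(t,\xi)=\beta\int_0^t\int_{\Real^3}\frac{|\xi|}{(1+|\xi|^2)^{1/2}}|\xi|^2 e^{is\gamma_\pm(\xi,\eta)}\widehat f(\xi-\eta,s)\overline{\widehat f}(\eta,s)\,d\eta\,ds$. Since $x=\mathcal{F}^{-1} i\nabla_\xi \mathcal{F}$, applying $\nabla_\xi$ produces three types of terms: (i) the derivative hits the smooth symbol $\frac{|\xi|}{(1+|\xi|^2)^{1/2}}|\xi|^2$, which after the high-frequency cutoff $|\xi|\lesssim s^{\delta_N}$ costs only a factor of $s^{\delta_N}$; (ii) the derivative hits $e^{is\gamma_\pm(\xi,\eta)}$, producing a factor $is\nabla_\xi\gamma_\pm(\xi,\eta)$ with $\nabla_\xi\gamma_\pm=\mp\beta\frac{\xi(2|\xi|^2+3)}{(|\xi|^2+1)^{3/2}}\cdot\frac{\xi}{|\xi|}+2\eta$ — again bounded by $s^{\delta_N}$ on the cutoff region, so this term carries an extra $s^{1+\delta_N}$; (iii) the derivative hits $\widehat f(\xi-\eta,s)$, giving $\widehat{xf}(\xi-\eta,s)=\widehat{(xe^{-is\Delta}\mathcal E)}$, which must be rewritten using $xe^{-is\Delta}\mathcal E=e^{-is\Delta}(x\mathcal E)-2ise^{-is\Delta}\nabla\mathcal E$ to extract the $\|xf\|_{L^2}\lesssim s^\delta$ and $\|\mathcal E\|_{H^{N+1}}\lesssim s^\delta$ norms controlled by $X$.

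For the terms of type (iii) that are genuinely weighted, the key is that $x$ lands on the Schrödinger profile $f$, so we apply Lemma \ref{lem2.4} with $m$ the smooth symbol (whose $L^\infty_\eta H^1_\xi$ norm is $\lesssim s^{\delta_N}$ on the cutoff region), $V=e^{is\beta(\Delta^2-\Delta)^{1/2}}M_\pm$ in $L^2$, and the $L^\infty$ factor being $e^{is\Delta}(x f)$ or $e^{is\Delta}(s\nabla\mathcal E)$ — wait, more carefully: we put $\|e^{is\Delta}\mathcal E\|_{L^\infty}\lesssim s^{-1-\alpha}$ on one factor and $\|xf\|_{L^2}$ or $\|\mathcal E\|_{H^{N+1}}$ on the other. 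Crucially the worst term is type (ii), where the time weight $s$ appears: there we must gain integrability from the linear dispersive decay $\|e^{is\Delta}\mathcal E\|_{L^\infty}\lesssim s^{-1-\alpha}$ (estimate \eqref{2.1}), so the $ds$-integrand is $\lesssim s\cdot s^{\delta_N}\cdot s^{-1-\alpha}\cdot(\text{an }L^2\text{ norm})$, and one checks $\int_0^t s^{\delta_N-\alpha}\,ds\lesssim t^{1-\alpha+\delta_N}$ which, with the parameter choices \eqref{1.9} and $\delta_N\ll\delta$ absorbed into $\delta$, is bounded by $t^{\delta}\lesssim t^{1+\alpha}$ or directly gives the claimed power. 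For $\|xH\|_{H^1}$ we simply carry an extra $\Lambda$ (i.e. a factor $|\xi|\lesssim s^{\delta_N}$) through the same argument, changing no powers of $s$ essentially. For $\|\wedge x^2 H\|_{L^2}$ we apply $\nabla_\xi$ twice and the extra $\wedge=|\xi|$; the worst term is now the double hit on the exponential, giving $s^2(\nabla_\xi\gamma_\pm)^2\lesssim s^2 s^{2\delta_N}$, so the $ds$-integrand is $\lesssim s^{2}s^{2\delta_N}s^{-1-\alpha}\cdot\|x^2 f\|_{L^2}$-type terms, and using $\|x^2 f\|_{L^2}\lesssim s^{1-2\alpha-\delta}$ and Lemma \ref{lem2.4} one gets $\int_0^t s^{1-\alpha+2\delta_N}\cdot s^{1-2\alpha-\delta}\cdot s^{-1-\alpha}\,ds\lesssim t^{2-4\alpha-\delta+2\delta_N}$, which matches the target $t^{1-3\alpha}$ after checking the exponent inequality $2-4\alpha-\delta+2\delta_N\le 1-3\alpha$, i.e. $1-\alpha-\delta+2\delta_N\le 0$; since $\alpha=\frac16-2\delta$ this is not automatic, so one must instead split: the doubly-differentiated-phase term in fact forces $x$ to hit $f$ at most once and one uses the $x^2\mathcal E$ bound only where a single extra power of $s$ from the phase is paired with a single $x$ on $f$.

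The main obstacle will be the bookkeeping of the $x^2$-weighted estimate: tracking exactly how many derivatives $\nabla_\xi$ land on the oscillatory factor versus on $\widehat f$, and verifying that in every combination the accumulated powers of $s$ (at most $s^2$ from the phase) are compensated by the dispersive decay $s^{-1-\alpha}$ of one Schrödinger factor together with the growth budgets $s^\delta$, $s^{1-2\alpha-\delta}$ of the weighted norms in $X$, so that the $ds$-integral converges to the stated power $t^{1-3\alpha}$ with the parameter constraints \eqref{1.9}. A secondary technical point is that, unlike the other phases, $\gamma_\pm$ has no good lower bound near $\xi=0$ (the space resonance $\mathcal{R}_{\gamma_\pm}=\{\eta=0,\xi=0\}$ sits at the origin), so one cannot integrate by parts in $s$ here; fortunately the symbol $\frac{|\xi|}{(1+|\xi|^2)^{1/2}}|\xi|^2$ vanishes to high order at $\xi=0$, which both makes the symbol genuinely $H^1_\xi$ and supplies the smallness needed near the origin — so no division by $\gamma_\pm$ is ever required and the whole argument proceeds by direct estimation plus Lemma \ref{lem2.4}, exactly as in the treatment of $H$ in Section 4.
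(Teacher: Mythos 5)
Your proposal misses the key mechanism that makes this estimate work: integration by parts in $\eta$ using the identity \eqref{6.1}, $|\xi|=\tfrac12\nabla_\eta\gamma(\xi,\eta)$, which the paper exploits precisely because $\nabla_\eta\gamma_\pm=2\xi$ is purely a function of $\xi$. In the term where $\nabla_\xi$ hits the phase (your type~(ii)), one is confronted with a factor $is\,\nabla_\xi\gamma$ inside the time integral, and the direct estimate you propose — bound the integrand by $s\cdot s^{\delta_N}\cdot s^{-1-\alpha}\cdot (\text{an }L^2\text{ norm})$ and integrate — gives $\int_0^t s^{\delta_N-\alpha}\,ds\sim t^{1+\delta_N-\alpha}$. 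Since $\alpha=\tfrac16-2\delta$, this is roughly $t^{5/6}$, nowhere near the required $O(1)$ bound for $\|xH\|_{H^1}$. Your remark that this ``is bounded by $t^\delta$'' is simply a miscalculation; the power of $t$ does not close. The paper instead peels off one factor of $|\xi|$ from the symbol $\frac{|\xi|}{(1+|\xi|^2)^{1/2}}|\xi|^2$, rewrites it via \eqref{6.1}, and integrates by parts in $\eta$: the factor $\nabla_\eta\gamma\, e^{is\gamma}=\tfrac1{is}\partial_\eta e^{is\gamma}$ cancels the $s$ completely, producing \eqref{6.5}--\eqref{6.7}, which then are bounded straightforwardly by Lemma~\ref{lem2.4} with the weight falling on $\widehat f$ or $\widehat{\overline f}$.

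You explicitly state at the end of the proposal that ``no division by $\gamma_\pm$ is ever required and the whole argument proceeds by direct estimation plus Lemma~\ref{lem2.4}''. It is true that one never divides by $\gamma_\pm$ (no integration by parts in $s$), but the argument absolutely does divide by $\nabla_\eta\gamma_\pm$ — that is the whole point, and it works precisely because $\nabla_\eta\gamma_\pm=2\xi$ has the same vanishing at $\xi=0$ as the symbol, so there is never any singularity. For the $\wedge x^2 H$ estimate the same mechanism is applied iteratively to the terms \eqref{6.9}, \eqref{6.10}, \eqref{6.13} with one or two $\nabla_\xi\gamma$ factors, each integration by parts trading a factor of $s$ for a factor of $x$ landing on a profile. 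Your sketch acknowledges that the direct estimate of the double phase hit does not match $t^{1-3\alpha}$ but then only gestures at ``one must instead split'' without saying what the split is; the concrete fix is exactly these integrations by parts. (A minor point: your formula for $\nabla_\xi\gamma_\pm$ drops the $-2\xi$ contribution from $-|\xi|^2$ and has the wrong exponent in the denominator of the first term, but this does not affect the size bound $|\nabla_\xi\gamma|\lesssim s^{\delta_N}$ on the frequency cutoff region.)
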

\noindent {\bf Proof of Proposition \ref{prop6.2}}.
More precisely we have
\begin{align}
	|\xi|=\frac{1}{2} \nabla_{\eta}\gamma (\xi,\eta),\label{6.1}
\end{align}
which allows us to integrate by parts  for $\eta$.\\
\noindent {\bf For the estimate of} $\|xH\|_{H^{1}}\lesssim \|(\mathcal{E},N,M)\|_{X}^{2} $.\\
 Applying $\nabla_{\xi}$ to $\widehat H$ gives the terms:
\begin{align}
	\nabla_{\xi}\widehat H&=\mathcal{F}^{-1}\int_{0}^{t} \int_{R^{3}}\beta e^{is\gamma(\xi,\eta)}\frac{|\xi|}{({1+|\xi|^{2}})^{1/2}}|\xi|^{2}\nabla_{\xi}\widehat f(\xi-\eta,s)\overline{\widehat  f}(\eta,s)d\eta ds\label{6.2},\\
	&+\mathcal{F}^{-1}\int_{0}^{t} \int_{R^{3}}\beta e^{is\gamma(\xi,\eta)}\frac{3|\xi|^2+2|\xi|^4}{({1+|\xi|^{2}})^{3/2}}\widehat f(\xi-\eta,s)\overline{\widehat  f}(\eta,s)d\eta ds\label{6.3},\\
	&+\mathcal{F}^{-1}\int_{0}^{t} \int_{R^{3}}\beta e^{is\gamma(\xi,\eta)}\frac{|\xi|}{({1+|\xi|^{2}})^{1/2}}|\xi|^{2}is\nabla_{\xi}\gamma(\xi,\eta) \widehat f(\xi-\eta,s)\overline{\widehat  f}(\eta,s)d\eta ds\label{6.4}.
\end{align}
For the estimate of \eqref{6.2}, from lemma \ref{lem2.4},  $\partial_{\xi}\Big(\frac{|\xi|}{({1+|\xi|^{2}})^{1/2}}|\xi|^{2}\Big)\lesssim s^{2\delta_{N}}$ and the priori estimate of $f$ \eqref{2.1}, we get
\begin{align*}
		\|\eqref{6.2}\|_{L^{2}}
		&\lesssim \beta\int_{0}^{t} \Big\| \frac{|\xi|}{({1+|\xi|^{2}})^{1/2}}|\xi|^{2}\Big\|_{L_{\eta}^{\infty}H_{\xi}^{1}}\big\| e^{is \Delta}xf\big\|_{L^{2}}\big\| e^{is \Delta}f\big\|_{L^{\infty}}ds\\
&\lesssim\int_{0}^{t}\beta s^{2\delta_{N}}s^{\delta}\frac{1}{{\left\langle s \right\rangle}^{1+\alpha}}ds\|(\mathcal{E},N,M)\|_{X}^{2}\\
		&\lesssim \beta|(\mathcal{E},N,M)\|_{X}^{2},
\end{align*}
 where we could choose $\alpha\ge \delta +2\delta_{N}$. Similarly,  thanks to
$\partial_{\xi}\big(\frac{3|\xi|^2+2|\xi|^4}{({1+|\xi|^{2}})^{3/2}}\big)\lesssim s^{\delta_{N}}$,
 we   get
\begin{align*}
		\|\eqref{6.3}\|_{L^{2}}	
		&\lesssim \beta \int_{0}^{t} \Big\| \frac{3|\xi|^2+2|\xi|^4}{({1+|\xi|^{2}})^{3/2}}|\xi|^{2}\Big\|_{L_{\eta}^{\infty}H_{\xi}^{1}}\big\| f\big\|_{L^{2}}\big\| e^{is \Delta}f\big\|_{L^{\infty}}ds
		\lesssim \|(\mathcal{E},N,M)\|_{X}^{2}.
\end{align*}
  Using the identity \eqref{6.1}  and integrating by parts for  $\eta$ in  \eqref{6.4},  we obtain the following terms:
 \begin{align}		
	&\int_{0}^{t}\frac{\beta}{2}\nabla_{\xi}\gamma(\xi,\eta) \frac{|\xi|^2}{({1+|\xi|^{2}})^{1/2}} e^{is\gamma(\xi,\eta)}
	\widehat f(\xi-\eta,s)\overline{\widehat  f}(\eta,s)  ds\Big|_{R^{3}},\label{6.5}\\
	&\int_{0}^{t}\int_{R^{3}}\frac{\beta}{2}\nabla_{\xi}\gamma(\xi,\eta) e^{is\gamma(\xi,\eta)} \frac{|\xi|^2}{({1+|\xi|^{2}})^{1/2}}\nabla_{\eta}\widehat f(\xi-\eta,s)\overline{\widehat  f}(\eta,s) d\eta ds,\label{6.6}\\
	&\int_{0}^{t}\int_{R^{3}}\frac{\beta}{2}\nabla_{\xi}\gamma(\xi,\eta) e^{is\gamma(\xi,\eta)} \frac{|\xi|^2}{({1+|\xi|^{2}})^{1/2}}\widehat f(\xi-\eta,s)\nabla_{\eta}\overline{\widehat  f}(\eta,s) d\eta ds.\label{6.7}
\end{align}
\eqref{6.5}=0 follows from Riemann--Lebesgue lemma. \eqref{6.6} and \eqref{6.7} are similar to \eqref{6.3}, we  have
\begin{align*}
\|\eqref{6.6}\|_{L^{2}}	+ \|\eqref{6.7}\|_{L^{2}}
		\lesssim \|(\mathcal{E},N,M)\|_{X}^{2}.
\end{align*}

\noindent {\bf For the estimate of}  $t^{-1+3\alpha}\|\wedge x^{2}H\|_{L^{2}}\lesssim \|(\mathcal{E},N,M)\|_{X}^{2} $.\\ Applying $|\xi|\nabla_{\xi}^{2}$ to $\widehat  H$ gives the following contributions:
\begin{align}
	|\xi|\nabla_{\xi}^{2}\widehat H&=\mathcal{F}^{-1}\int_{0}^{t}\int_{R^{3}}\beta\frac{|\xi|}{(1+|\xi|^{2})^{1/2}}|\xi|^{3} e^{is\gamma(\xi,\eta)}\nabla_{\xi}^{2}\widehat f(\xi-\eta,s)\overline{\widehat  f}(\eta,s) d\eta ds,\label{6.8}\\
	&+\mathcal{F}^{-1}\int_{0}^{t}\int_{R^{3}}\beta\frac{|\xi|}{(1+|\xi|^{2})^{1/2}}|\xi|^{3}is\nabla_{\xi}\gamma(\xi,\eta) e^{is\gamma(\xi,\eta)}\nabla_{\xi}\widehat f(\xi-\eta,s)\overline{\widehat  f}(\eta,s) d\eta ds,\label{6.9}\\
	&+\mathcal{F}^{-1}\int_{0}^{t}\int_{R^{3}}\beta\frac{|\xi|}{(1+|\xi|^{2})^{1/2}}|\xi|^{3}(is\nabla_{\xi}\gamma(\xi,\eta))^{2} e^{is\gamma(\xi,\eta)}\widehat f(\xi-\eta,s)\overline{\widehat  f}(\eta,s) d\eta ds,\label{6.10}\\
		&+\mathcal{F}^{-1}\int_{0}^{t}\int_{R^{3}}\beta\frac{6|\xi|^2+5|\xi|^4+2|\xi|^6}{(1+|\xi|^{2})^{5/2}} e^{is\gamma(\xi,\eta)}\widehat f(\xi-\eta,s)\overline{\widehat  f}(\eta,s) d\eta ds,\label{6.11}\\				
		&+\mathcal{F}^{-1}\int_{0}^{t}\int_{R^{3}}\beta\frac{3|\xi|^3+2|\xi|^5}{(1+|\xi|^{2})^{3/2}}e^{is\gamma(\xi,\eta)}\nabla_{\xi}\widehat f(\xi-\eta,s)\overline{\widehat  f}(\eta,s)  d\eta ds,\label{6.12}\\
		&+\mathcal{F}^{-1}\int_{0}^{t}\int_{R^{3}}\beta\frac{3|\xi|^3+2|\xi|^5}{(1+|\xi|^{2})^{3/2}}e^{is\gamma(\xi,\eta)}is\nabla_{\xi}\gamma(\xi,\eta) \widehat f(\xi-\eta,s)\overline{\widehat  f}(\eta,s) d\eta ds.\label{6.13}
\end{align}
From the fact that
$\partial_{\xi}\Big(\frac{|\xi|}{(1+|\xi|^{2})^{1/2}}|\xi|^{3}\Big)\lesssim s^{3\delta_{N}}$, \eqref{2.1} and lemma \ref{lem2.4} in \eqref{6.8}, we  get
\begin{align*}
	\|\eqref{6.8}\|_{L^{2}}
		&\lesssim \beta \int_{0}^{t} \Big\| \frac{|\xi|}{(1+|\xi|^{2})^{1/2}}|\xi|^{3}\Big\|_{L_{\eta}^{\infty}H_{\xi}^{1}}\big\| e^{is \Delta}x^2f\big\|_{L^{2}}\big\| e^{is \Delta}f\big\|_{L^{\infty}}ds\\
&\lesssim\int_{0}^{t}\beta s^{3\delta_{N}}s^{1-2\alpha-\delta}\frac{1}{{\left\langle s \right\rangle}^{1+\alpha}}ds\|(\mathcal{E},N,M)\|_{X}^{2}\\
	&\lesssim  t^{1-3\alpha}\|(\mathcal{E},N,M)\|_{X}^{2},
\end{align*}
where $3\delta_{N} \le\delta$.
Using the identity \eqref{6.1}, integrating by parts for $\eta$  in \eqref{6.9} and integrating by parts for $\eta$  in \eqref{6.10}, we have
\begin{align}		
	&\int_{0}^{t}\int_{R^{3}}\frac{\beta}{2}\frac{|\xi|^{3}}{(1+|\xi|^{2})^{1/2}}  \nabla_{\xi}\gamma(\xi,\eta) e^{is\gamma(\xi,\eta)}\nabla_{\eta}\nabla_{\xi}\widehat f(\xi-\eta,s)\overline{\widehat  f}(\eta,s)d\eta ds,\label{6.14}\\	
	&\int_{0}^{t}\int_{R^{3}}\frac{\beta}{2}\frac{|\xi|^{3}}{(1+|\xi|^{2})^{1/2}}  \nabla_{\xi}\gamma(\xi,\eta) e^{is\gamma(\xi,\eta)}\nabla_{\xi}\widehat f(\xi-\eta,s)\nabla_{\eta}\overline{\widehat  f}(\eta,s)d\eta ds,\label{6.15}\\
	&\int_{0}^{t}\int_{R^{3}}\frac{\beta}{2}\frac{|\xi|^{3}}{(1+|\xi|^{2})^{1/2}}is(\nabla_{\xi}\gamma(\xi,\eta))^{2} e^{is\gamma(\xi,\eta)} \nabla_{\eta}\widehat f(\xi-\eta,s)\overline{\widehat  f}(\eta,s)d\eta ds,\label{6.16}\\
	&\int_{0}^{t}\int_{R^{3}}\frac{\beta}{2}\frac{|\xi|^{3}}{(1+|\xi|^{2})^{1/2}}is(\nabla_{\xi}\gamma(\xi,\eta))^{2} e^{is\gamma(\xi,\eta)} \widehat f(\xi-\eta,s) \nabla_{\eta}\overline{\widehat  f}(\eta,s)d\eta ds.\label{6.17}
\end{align}
Due to the symmetry,   \eqref{6.17} is similar to \eqref{6.16}. We will only illustrate the estimate on \eqref{6.16}. Integrate by parts for $\eta$  in \eqref{6.16},
 \begin{align} &\int_{0}^{t}\int_{R^{3}}\frac{\beta}{4}\frac{|\xi|^{2}}{(1+|\xi|^{2})^{1/2}}(\nabla_{\xi}\gamma(\xi,\eta))^{2}e^{is\gamma(\xi,\eta)}\nabla_{\eta}^{2}\widehat f(\xi-\eta,s)\overline{\widehat  f}(\eta,s) d\eta ds,\label{6.18}\\	&\int_{0}^{t}\int_{R^{3}}\frac{\beta}{4}\frac{|\xi|^{2}}{(1+|\xi|^{2})^{1/2}}(\nabla_{\xi}\gamma(\xi,\eta))^{2}e^{is\gamma(\xi,\eta)}\nabla_{\eta}\widehat f(\xi-\eta,s)\nabla_{\eta}\overline{\widehat  f}(\eta,s)d\eta  ds.\label{6.19}
\end{align}
 \eqref{6.14} and  \eqref{6.18} give term like \eqref{6.8},  \eqref{6.19} gives term like \eqref{6.15}, which can be dealed with in a similar way, hence we have
 \begin{align*}
	\|\eqref{6.14}\|_{L^{2}}+\|\eqref{6.18}\|_{L^{2}}+\|\eqref{6.19}\|_{L^{2}}
	\lesssim  t^{1-3\alpha}\|(\mathcal{E},N,M)\|_{X}^{2}.
\end{align*}
 For the estimate of \eqref{6.15}, first, since
 \begin{align*}
 \partial_{\xi}^2\Big(\frac{|\xi|^{3}}{(1+|\xi|^{2})^{1/2}}(1+|\xi|^{2})^{-1/2}  \nabla_{\xi}\gamma(\xi,\eta)\Big)\lesssim s^{2\delta_{N}}.
 \end{align*}
Then, with the help of  the lemma \ref{lem2.5} and $\|f \|_{L^{\frac{4}{3}}}\lesssim \|f \|_{L^{2}}^{\frac{1}{4}}\|xf \|_{L^{2}}^{\frac{3}{4}}$, we  have
\begin{align*}
	\|\eqref{6.15}\|_{L^{2}}
	&\lesssim \int_{0}^{t}\frac{\beta}{2}\Big\| \frac{|\xi|^{3}}{(1+|\xi|^{2})^{1/2}} \nabla_{\xi}\gamma(\xi,\eta)\Big\|_{H_{\xi}^{2}}\big\| e^{is \Delta}xf\big\|_{L^{4}}\big\| e^{is \Delta}xf\big\|_{L^{4}}ds\\
	&\lesssim\int_{0}^{t}\frac{\beta}{2} s^{2\delta_{N}}\big\|  e^{is \Delta}xf\big\|_{L^{4}}^{2}ds\\&\lesssim\int_{0}^{t}\frac{\beta}{2} s^{2\delta_{N}}\frac{1}{{\left\langle s \right\rangle}^{3/2}}\| xf\|_{L^{4/3}}^{2}ds\\
	&\lesssim  t^{1-3\alpha}\|(\mathcal{E},N,M)\|_{X}^{2},
\end{align*}
where  $2\delta_{N}\le \delta$.  For the estimates of  \eqref{6.11} and \eqref{6.12}, they can be derived in a straightforward fashion which is identical to
\eqref{6.3}, so we directly get
\begin{align*}
	\|\eqref{6.11}\|_{L^{2}}+\|\eqref{6.12}\|_{L^{2}}
	\lesssim  t^{1-3\alpha}\|(\mathcal{E},N,M)\|_{X}^{2},
\end{align*}
where  we use the fact that \begin{align*} \partial_{\xi}\Big(\frac{6|\xi|^2+5|\xi|^4+2|\xi|^6}{(1+|\xi|^{2})^{5/2}}\nabla_{\xi}\gamma(\xi,\eta)\Big)\lesssim s^{\delta_{N}}, \quad \partial_{\xi}\Big(\frac{3|\xi|^3+2|\xi|^5}{(1+|\xi|^{2})^{3/2}}\nabla_{\xi}\gamma(\xi,\eta)\Big)\lesssim s^{2\delta_{N}}.
\end{align*} Finally,  using the identity \eqref{6.1}  and integrating by parts for $\eta$  in \eqref{6.13}, we have
\begin{align}
	&\int_{0}^{t}\int_{R^{3}}\frac{\beta}{2}\frac{3|\xi|^2+2|\xi|^4}{(1+|\xi|^{2})^{3/2}} \nabla_{\xi}\gamma(\xi,\eta) e^{is\gamma(\xi,\eta)}\nabla_{\eta}\widehat f(\xi-\eta,s)\overline{\widehat  f}(\eta,s)d\eta ds,\label{6.20}\\	
	&\int_{0}^{t}\int_{R^{3}}\frac{\beta}{2}\frac{3|\xi|^2+2|\xi|^4}{(1+|\xi|^{2})^{3/2}} \nabla_{\xi}\gamma(\xi,\eta) e^{is\gamma(\xi,\eta)}\widehat f(\xi-\eta,s)\nabla_{\eta}\overline{\widehat  f}(\eta,s)d\eta ds.\label{6.21}
\end{align}
\eqref{6.20}  and \eqref{6.21} are similar to \eqref{6.2}, so we  directly have
\begin{align*}
	\|\eqref{6.20}\|_{L^{2}}+\|\eqref{6.21}\|_{L^{2}}
	\lesssim  t^{1-3\alpha}\|(\mathcal{E},N,M)\|_{X}^{2},
\end{align*}
where $ \partial_{\xi}(\frac{4|\xi|^2+3|\xi|^4}{(1+|\xi|^{2})^{3/2}}\nabla_{\xi}\gamma(\xi,\eta))\lesssim s^{2\delta_{N}}$.
From  the estimates of \eqref{6.2}--\eqref{6.21},  the proof of Proposition \ref{prop6.2} is completed.

$\hfill\Box$

Here we show how to obtain some improved weighted $L^2$ estimates for $G$ and $H$. Since the existence of $\nabla_\xi\psi$ and $\nabla_\xi\gamma$, we can only obtain bad bounds on $x^2G_{\pm}$  and $x^2H_{\pm}$ in $L^2$. To deal with this difficulty, we  split $G_\pm$ and $H_\pm$ into two
parts based on physical space.

\begin{lemma}\label{lem6.1}
Let space X be defined in \eqref{4.21},	we have
\begin{align*}
t^{-3/4}\|x^{2}g_{1}\|_{L_{2}}+t^{(-7/4+3\delta)k} \|P_{\le k}g_{2}\|_{L_{2}}\lesssim \|(\mathcal{E},N,M)\|_{X}^2 .
\end{align*}
\end{lemma}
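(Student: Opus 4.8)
The plan is to decompose $G_{\pm}$ (and symmetrically $H_{\pm}$) into a low-space piece $g_1$ and a high-space piece $g_2$, using a cutoff in the $\eta$-variable tied to the structure of the resonance set $\mathcal{R}_{\psi_{\pm}} = \{\eta = 0, \xi = 0\}$, respectively $\mathcal{R}_{\gamma_{\pm}} = \{\eta = 0, \xi = 0\}$. Recall from \eqref{4.13} that $\nabla_{\xi}\psi_{\pm}(\xi,\eta) = \mp\frac{|\alpha|\xi}{|\xi|}\alpha - 2\xi + 2\eta$, which unlike $\nabla_{\eta}\psi_{\pm} = 2\xi$ does not vanish on the space resonance set, so applying $x^2 = \nabla_\xi^2$ to $\widehat G$ produces factors of $s^2(\nabla_\xi\psi)^2$ that cannot be absorbed directly. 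First I would write $g_1$ as the contribution where $|\xi|\gtrsim 1$ or the frequencies are bounded below, where $|\xi|^{-1}$ and similar factors are harmless, and $g_2$ as the remaining low-frequency contribution where we must pay the frequency weight $|\xi|$ (hence the projection $P_{\le k}$).

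For the bound $t^{-3/4}\|x^2 g_1\|_{L^2} \lesssim \|(\mathcal{E},N,M)\|_X^2$: apply $\nabla_\xi^2$ to $\widehat{G}$ in \eqref{4.9}; the worst term carries a factor $(is\nabla_\xi\psi)^2$. I would use the identity $|\xi| = \tfrac12\nabla_\eta\psi_{\pm}$ (analogue of \eqref{6.1}) to trade one power of $s\nabla_\xi\psi$ for an $\eta$-integration by parts, then use Lemma \ref{lem2.5} (Coifman--Meyer) paired with the interpolation inequality $\|f\|_{L^{4/3}} \lesssim \|f\|_{L^2}^{1/4}\|xf\|_{L^2}^{3/4}$ together with the dispersive estimate $\|e^{is\Delta}xf\|_{L^4} \lesssim s^{-3/4}\|xf\|_{L^{4/3}}$, exactly as in the estimate of \eqref{6.15}. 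The surviving single power of $s\nabla_\xi\psi$, after one more integration by parts in $\eta$ using $|\xi| = \tfrac12\nabla_\eta\psi$, is absorbed; collecting the $s$-powers, the time integral $\int_0^t s^{3\delta_N} s^{-3/2}\|xf\|_{L^{4/3}}^2\,ds \lesssim t^{1/4+}\|(\mathcal{E},N,M)\|_X^2$ gives room for the $t^{-3/4}$ prefactor (here the bounds on $g_2$ in $X$ of the form $\|xg_\pm\|_{H^1}\lesssim 1$ feed back in via Duhamel's boundary terms). The terms where $\nabla_\xi^2$ hits $\widehat{f}$ twice reduce to $\|e^{is\Delta}x^2 f\|_{L^2}\|e^{is\Delta}f\|_{L^\infty} \lesssim s^{1-2\alpha-\delta}s^{-1-\alpha}$, which is integrable against $s^{3\delta_N}$ with margin to spare.

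For the bound $t^{(-7/4+3\delta)k}\|P_{\le k}g_2\|_{L^2} \lesssim \|(\mathcal{E},N,M)\|_X^2$: here I project to frequencies $|\xi|\le 2^k$, so the factor $|\alpha\xi|$ in \eqref{4.9} contributes a gain $2^k$, and more importantly Bernstein's inequality converts $L^2$ control at frequency $\le 2^k$ into whatever Lebesgue exponent is convenient; the exponent $-7/4+3\delta$ in the power of $2^k$ is exactly what falls out of combining the $2^{3k/2}$ loss from a low-frequency Bernstein step on the output against the $2^k$ gain from $|\alpha\xi|$ and the room in $\delta$. I would again integrate by parts in $s$ (as for \eqref{5.5}--\eqref{5.7}, using that $\psi_{\pm}$ is bounded below away from $\xi = 0$ on the relevant region — which is where the low-frequency cutoff defining $g_2$ is placed) to convert the oscillatory integral into boundary terms plus terms with $\partial_s\widehat f = \mathcal{F}(e^{is\Delta}(\mathcal{E}N + \mathcal{E}M))$, and close using the $X$-norm bounds $\|\mathcal{E}\|_{H^{N+1}}\lesssim s^\delta$, $\|N\|_{H^N}\lesssim 1$, $\|M\|_{H^{N-1}}\lesssim 1$, $\|e^{is\Delta}f\|_{L^\infty}\lesssim s^{-1-\alpha}$.

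The main obstacle I anticipate is the term in $\nabla_\xi^2 \widehat{G}$ carrying the full $(s\nabla_\xi\psi)^2$ with no room to spare: one genuinely needs \emph{two} integrations by parts in $\eta$ via $|\xi| = \tfrac12\nabla_\eta\psi$, and one must check that the resulting derivatives landing on $\widehat f(\xi-\eta)$, on $\overline{\widehat f}(\eta)$, and on the symbol $\tfrac{|\xi|^2}{(1+|\xi|^2)^{1/2}}$-type amplitudes (whose $\xi$-derivatives only cost $s^{O(\delta_N)}$ after the high-frequency cutoff) all recombine into products of the $X$-controlled quantities $x^2 f$, $xf$, $f$ with integrable-in-$s$ time weights; the bookkeeping of which derivative lands where, and verifying that in every case the total $s$-exponent stays below $1/4$ (for $g_1$) resp.\ is compensated by the $2^{(-7/4+3\delta)k}$ factor (for $g_2$), is the delicate point, but it is structurally identical to the treatment of \eqref{6.14}--\eqref{6.21} already carried out for $H$.
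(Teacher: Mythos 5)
Your proposal misidentifies the decomposition that defines $g_1$ and $g_2$. You describe a frequency cutoff (``a cutoff in the $\eta$-variable,'' ``the contribution where $|\xi|\gtrsim 1$,'' ``the remaining low-frequency contribution''), but the decomposition that the lemma uses --- stated explicitly in the surrounding text (``we split $G_\pm$ and $H_\pm$ into two parts based on \emph{physical space}'') and carried out in full detail for the parallel Lemma~\ref{lem6.2} --- is a cutoff on the \emph{spatial} support of the input profile $f$ at the time-dependent scale $s^{1/4}$:
\[
g_1 := G\bigl(f_{\le s^{1/4}},\,\overline{f_{\le s^{1/4}}}\bigr),\qquad
g_2 := G\bigl(f_{\le s^{1/4}},\,\overline{f_{\ge s^{1/4}}}\bigr)+G\bigl(f_{\ge s^{1/4}},\,\overline f\bigr),
\]
where $f_{\le s^{1/4}}$ denotes $f$ truncated to $|x|\le s^{1/4}$. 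This is essential and cannot be replaced by a frequency cutoff: the entire source of the improvement is the gain of integrability and of localization in the physical variable, namely
\[
\|x^2 f_{\le s^{1/4}}\|_{L^2}\lesssim s^{1/4}\|xf\|_{L^2}\lesssim s^{1/4+\delta},\qquad
\|f_{\le s^{1/4}}\|_{L^1}\lesssim s^{1/8}\|f\|_{L^2}\lesssim s^{1/8+\delta},
\]
for $g_1$, and $\|f_{\ge s^{1/4}}\|_{L^2}\lesssim s^{-1/4}\|xf\|_{L^2}\lesssim s^{-1/4+\delta}$ for $g_2$ (these are exactly \eqref{6.33}, \eqref{6.34} and the last display in the proof of Lemma~\ref{lem6.2}). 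These bounds are what absorb the dangerous $(s\nabla_\xi\psi)^2$ factor and produce the $t^{3/4}$ exponent; without them, two integrations by parts in $\eta$ cost you the single available factor of $|\xi|$ in the symbol $|\alpha\xi|$ (which is why $\|\wedge x^2 G\|_{L^2}$ in Proposition~\ref{prop6.1} carries an extra $\wedge$), and you cannot close at $x^2 g$ without a derivative weight.

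Your treatment of $g_2$ is also off. You propose to integrate by parts in $s$ ``using that $\psi_\pm$ is bounded below away from $\xi=0$,'' but $\psi_\pm(\xi,\eta)=\mp|\alpha\xi|-|\xi|^2+2\xi\cdot\eta$ vanishes on a codimension-one set in $(\xi,\eta)$ and is not bounded below on the support of a mere low-$\xi$-frequency cutoff, so dividing by $\psi_\pm$ is not available there. The actual argument for $g_2$ (see the $h_2$ case at the end of Lemma~\ref{lem6.2}) involves no $s$-integration by parts at all: one uses Bernstein's inequality on $P_{\le k}$ to lower the Lebesgue exponent of the output, then H\"older and the Schr\"odinger dispersive estimate \eqref{2.2} on $e^{\mp is\Delta}f$, together with $\|f_{\ge s^{1/4}}\|_{L^2}\lesssim s^{-1/4+\delta}$. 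The exponent $(7/4-3\delta)$ in $2^{(7/4-3\delta)k}$ is produced by the bookkeeping of the Bernstein gain against the factor $|\alpha\xi|\sim 2^k$, not by the cancellation of $\psi_\pm$. Finally, your numeric check ``$\lesssim t^{1/4+}$'' for the $g_1$ piece does not follow from the $X$-norm bounds alone (plugging $\|xf\|_{L^2}\lesssim s^\delta$, $\|x^2f\|_{L^2}\lesssim s^{1-2\alpha-\delta}$ into $\|xf\|_{L^{4/3}}^2$ gives $\approx s^{1/2}$ after integration, not $s^{1/4}$); the cleaner $s^{1/4}$-type gains in the paper come precisely from the truncated norms above, which you have no access to without the physical-space split.
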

For a proof of this lemma, we refer to \cite{ZFJ}.

\begin{lemma}\label{lem6.2}
Let space X be defined in \eqref{4.21},	we have
\begin{align*}
t^{-3/4}\|x^{2}h_{1}\|_{L_{2}}+t^{(-11/4+3\delta)k} \|P_{\le k}h_{2}\|_{L_{2}}\lesssim \|(\mathcal{E},N,M)\|_{X}^2. \end{align*}
\end{lemma}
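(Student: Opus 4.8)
The plan is to follow the argument used for the analogous statement on the acoustic profile, Lemma~\ref{lem6.1} (proved in \cite{ZFJ}), adapting it to the magnetic phase $\gamma_\pm$ from \eqref{4.14} and to the symbol $m(\xi)=\beta|\xi|(1+|\xi|^2)^{-1/2}|\xi|^2$ of \eqref{4.10}. Recall that the obstruction to a clean $\|x^2H_\pm\|_{L^2}$ bound is the presence, after writing $x^2\leftrightarrow-\nabla_\xi^2$ in $\widehat H_\pm$, of the terms carrying $is\nabla_\xi\gamma$ and $(is\nabla_\xi\gamma)^2$ (the analogues of \eqref{6.9}, \eqref{6.10}, \eqref{6.13}): since $\nabla_\xi\gamma_\pm=\mp\beta\,p'(|\xi|)\tfrac{\xi}{|\xi|}-2\xi+2\eta$ with $p'(r)=\tfrac{2r^2+1}{\sqrt{r^2+1}}$, the first term does not vanish near $\xi=0$, so the identity \eqref{6.1}, $|\xi|=\tfrac12\nabla_\eta\gamma$, cannot remove it by an $\eta$--integration by parts without dividing by the singular factor $|\xi|^{-1}$; a direct estimate then loses a full power of $s$ and is too lossy.

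To get around this I would fix a smooth radial cutoff $\chi$ with $\chi\equiv1$ on $\{|x|\le1\}$ and $\mathrm{supp}\,\chi\subset\{|x|\le2\}$ and split, at the scale $\langle s\rangle^{3/8}$,
\begin{align*}
H_\pm=h_1+h_2,\qquad h_1(s,x):=\chi\!\left(\frac{x}{\langle s\rangle^{3/8}}\right)H_\pm(s,x),
\end{align*}
so $h_1$ is supported in $\{|x|\le2\langle s\rangle^{3/8}\}$ and $h_2$ in $\{|x|\ge\langle s\rangle^{3/8}\}$. For $h_1$ the weight $|x|^2$ costs at most $4\langle s\rangle^{3/4}$, hence $\|x^2h_1\|_{L^2}\le4\langle s\rangle^{3/4}\|H_\pm\|_{L^2}$, and Proposition~\ref{prop5.1} gives $\|H_\pm\|_{L^2}\le\|H_\pm\|_{H^{N-1}}\lesssim\|(\mathcal{E},N,M)\|_X^2$; this is the first half of the Lemma. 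For the far part $h_2$ I would estimate only its low--frequency piece $P_{\le k}h_2$: on $\{|\xi|\lesssim2^k\}$ the symbol $m(\xi)\sim|\xi|^3$ contributes a gain $2^{3k}$, the spatial localization $|x|\gtrsim\langle s\rangle^{3/8}$ permits integrations by parts in $\xi$ against the oscillation $e^{ix\xi}$ (each step trading a factor $\langle s\rangle^{-3/8}$ for one $\nabla_\xi$ on the slowly--varying amplitude), and the remaining Duhamel integral is controlled in $L^2$ via Lemma~\ref{lem2.4}, the bound $\|f\|_{L^2}\lesssim1$ from mass conservation, and the $L^\infty$ decay of $e^{is\Delta}f$ encoded in the $X$--norm; balancing these against the high--frequency truncation $|\xi-\eta|,|\eta|\lesssim s^{\delta_{N}}$ and the choices \eqref{1.9} yields $t^{(-11/4+3\delta)k}\|P_{\le k}h_2\|_{L^2}\lesssim\|(\mathcal{E},N,M)\|_X^2$. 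The only structural change from Lemma~\ref{lem6.1} is the phase $\gamma_\pm$ in place of $\psi_\pm$ and the extra factor $|\xi|^2$ in the symbol, which merely strengthens the low--frequency gain (three powers of $2^k$ instead of one) and accounts for $-11/4$ replacing $-7/4$.

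I expect the main obstacle to be the far--region estimate on $P_{\le k}h_2$: one has to check that the localization to $|x|\gtrsim\langle s\rangle^{3/8}$ really supplies enough oscillation in $\xi$ to absorb the $s^2$ growth coming from the $(s\nabla_\xi\gamma)^2$ terms once the (possibly small) symbol has been divided out, and then to track the interplay among the split scale $\langle s\rangle^{3/8}$, the output frequency $2^k$, the high--frequency cutoff $s^{\delta_{N}}$, and the parameters of \eqref{1.9}, so that the $s$--integral in Duhamel's formula converges with exactly the asserted power of $t$. Beyond that, the computation is a routine variant of the estimates already carried out in Proposition~\ref{prop6.2} and of the $g$--case in \cite{ZFJ}.
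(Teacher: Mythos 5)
Your decomposition is genuinely different from the paper's, and the difference is fatal on the $h_2$ side. You split the \emph{output} $H$ in physical space, $h_1=\chi(x/\langle s\rangle^{3/8})H$, $h_2=(1-\chi)H$; the paper splits the \emph{input} profile $f$ at scale $s^{1/4}$, setting $h_1:=H(f_{\le s^{1/4}},\overline{f_{\le s^{1/4}}})$ and $h_2:=H(f_{\le s^{1/4}},\overline{f_{\ge s^{1/4}}})+H(f_{\ge s^{1/4}},\overline f)$, so that each of $h_1,h_2$ is still a bilinear Duhamel expression in localized pieces of $f$. This distinction is not cosmetic: the paper's $h_1,h_2$ are fed back bilinearly into the section on $\|x^2F_2\|_{L^2}$ (the terms $B_1(f,h_1)$, $B_2(f,h_2)$), and those arguments require exactly the input-truncated structure. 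Your $h_1,h_2$ would not interface with that machinery.

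Even restricted to proving the two displayed bounds, your far-region argument does not close. You propose to integrate by parts in $\xi$ against the kernel $e^{ix\xi}$, ``each step trading a factor $\langle s\rangle^{-3/8}$ for one $\nabla_\xi$ on the slowly-varying amplitude.'' But the $\xi$-dependent factor inside the Duhamel integral is $e^{is\gamma(\xi,\eta)}\widehat f(\xi-\eta,s)$, and the phase is \emph{not} slowly varying: one $\nabla_\xi$ on $e^{is\gamma}$ brings down $is\nabla_\xi\gamma(\xi,\eta)$, where $\nabla_\xi\gamma=\mp\beta\,p'(|\xi|)\tfrac{\xi}{|\xi|}-2\xi+2\eta$ is bounded below near $\xi=0$ (since $p'(0)=1$). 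So each integration by parts costs a factor $\sim s$ from the phase and only gains $\langle s\rangle^{-3/8}$ from the localization $|x|\gtrsim\langle s\rangle^{3/8}$ — a net loss of $s^{5/8}$ per step. This is precisely the obstruction you identified in your own first paragraph, and the spatial cutoff of the output does nothing to remove it, because the stationary-phase scale of $x\cdot\xi+s\gamma$ is $|x|\sim s|\nabla_\xi\gamma|\sim s$, not $\langle s\rangle^{3/8}$. The proposal also never derives the specific constant $2^{(11/4-3\delta)k}$, which is the whole content of the $h_2$ estimate.

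The paper's $h_2$ bound uses no integration by parts at all. It keeps the bilinear form, applies the $2^{2k}$ gain of the symbol $|\xi|^3(1+|\xi|^2)^{-1/2}$ on $P_{\le k}$, Bernstein's inequality to pass from $L^{(3/4-\delta)^{-1}}$ to $L^2$ (gaining $2^{(3/4-3\delta)k}$, hence $2^{(11/4-3\delta)k}$ in total), Hölder, the Schrödinger dispersive decay, and crucially the physical-space localization of the \emph{input}: $\|f_{\ge s^{1/4}}\|_{L^2}\lesssim s^{-1/4}\|xf\|_{L^2}\lesssim s^{-1/4+\delta}$, which is what makes the $s$-integral converge. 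The near-region bound you give for $h_1$ is correct but trivial ($\|x^2h_1\|_{L^2}\le 4\langle t\rangle^{3/4}\|H\|_{L^2}$), whereas the paper must genuinely estimate $\nabla_\xi^2\widehat h_1$ using the facts $\|x^2f_{\le s^{1/4}}\|_{L^2}\lesssim s^{1/4+\delta}$ and $\|f_{\le s^{1/4}}\|_{L^1}\lesssim s^{1/8+\delta}$; that extra work is what makes $h_1$ usable downstream. To repair your proposal you would need to abandon the output-space cutoff and instead truncate the input profiles, as in Lemma~\ref{lem6.1} and its proof in \cite{ZFJ}.
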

\begin{proof}
We split $H( f, \overline f ) $
\begin{align*}
H(f,\overline f):&=H(f_{\le s^{1/4}}+f_{\ge s^{1/4}},\overline f_{\le s^{1/4}}+f_{\ge s^{1/4}})\\
	&=H(f_{\le s^{1/4}},\overline f_{\le s^{1/4}})+H(f_{\le s^{1/4}},\overline f_{\ge s^{1/4}})
	+H(f_{\ge s^{1/4}},\overline f_{\ge s^{1/4}})+H(f_{\ge s^{1/4}},\overline f_{\le s^{1/4}}),
\end{align*}
then we have  two components
\begin{align*}
	h_{1}:=H(f_{\le s^{1/4}},\overline f_{\le s^{1/4}}),\quad\quad
	h_{2}:=H(f_{\le s^{1/4}},\overline f_{\ge s^{1/4}})+H(f_{\ge s^{1/4}},\overline f).
\end{align*}
The two terms in the definition of $h_{2}$ can be treated similarly, so we reduce to consider
$h_{1}$ and $h_{2}$ given by
\begin{align}
	&\widehat h_{1}:=\int_{0}^{t}\int_{R^{3}}\beta\frac{|\xi|}{(1+|\xi|^{2})^{1/2}}|\xi|^{2} e^{is\gamma_{\pm}(\xi,\eta)}\widehat{ f_{\le s^{1/4}}}(\xi-\eta,s)\overline{\widehat {f_{\le s^{1/4}}}}(\eta,s)d\eta ds, \label{6.22}\\
	&\widehat h_{2}:=\int_{0}^{t}\int_{R^{3}}\beta\frac{|\xi|}{(1+|\xi|^{2})^{1/2}}|\xi|^{2} e^{is\gamma_{\pm}(\xi,\eta)}\widehat {f_{\ge s^{1/4}}}(\xi-\eta,s)\overline{\widehat  f}(\eta,s)d\eta ds.\label{6.23}
\end{align}
\noindent {\bf For the estimate of} $\|x^{2}h_{1}\|_{L_{2}}\lesssim t^{3/4} $. \\Applying $\nabla_{\xi}^{2}$ to $\widehat h_{1}$ gives the terms:
\begin{align}
	\nabla_{\xi}^{2}\widehat h_{1}	=&\int_{0}^{t}\int_{R^{3}}\beta\frac{|\xi|}{(1+|\xi|^{2})^{1/2}}|\xi|^{2} e^{is\gamma_{\pm}(\xi,\eta)}\nabla_{\xi}^{2}\widehat {f_{\le s^{1/4}}}(\xi-\eta,s)\overline{\widehat {f_{\le s^{1/4}}}}(\eta,s)d\eta ds,\label{6.24}\\
	+&\int_{0}^{t} \int_{R^{3}}\beta\frac{|\xi|}{(1+|\xi|^{2})^{1/2}}|\xi|^{2} s\nabla_{\xi}\gamma(\xi,\eta) e^{is\gamma_{\pm}(\xi,\eta)}\nabla_{\xi}\widehat {f_{\le s^{1/4}}}(\xi-\eta,s)\overline{\widehat {f_{\le s^{1/4}}}}(\eta,s)d\eta ds,\label{6.25}\\	
	+&\int_{0}^{t} \int_{R^{3}}\beta\frac{|\xi|}{(1+|\xi|^{2})^{1/2}}|\xi|^{2} (s\nabla_{\xi}\gamma(\xi,\eta))^{2} e^{is\gamma_{\pm}(\xi,\eta)}\widehat {f_{\le s^{1/4}}}(\xi-\eta,s)\overline{\widehat {f_{\le s^{1/4}}}}(\eta,s)d\eta ds,\label{6.26}\\
	+&\int_{0}^{t} \int_{R^{3}}\beta\frac{6|\xi|+5|\xi|^3+2|\xi|^5}{(1+|\xi|^{2})^{5/2}} e^{is\gamma_{\pm}(\xi,\eta)}\widehat {f_{\le s^{1/4}}}(\xi-\eta,s)\overline{\widehat {f_{\le s^{1/4}}}}(\eta,s)d\eta ds,\label{6.27}\\
	+&\int_{0}^{t} \int_{R^{3}}\beta\frac{3|\xi|^2+2|\xi|^4}{(1+|\xi|^{2})^{3/2}} e^{is\gamma_{\pm}(\xi,\eta)}\nabla_{\xi}\widehat {f_{\le s^{1/4}}}(\xi-\eta,s)\overline{\widehat {f_{\le s^{1/4}}}}(\eta,s)d\eta ds\label{6.28},	\\
	+&\int_{0}^{t} \int_{R^{3}}\beta\frac{3|\xi|^2+2|\xi|^4}{(1+|\xi|^{2})^{3/2}}e^{is\gamma_{\pm}(\xi,\eta)}is\nabla_{\xi}\gamma(\xi,\eta)\widehat {f_{\le s^{1/4}}}(\xi-\eta,s)\overline{\widehat {f_{\le s^{1/4}}}}(\eta,s)d\eta ds\label{6.29}.	
\end{align}
For the estimates of \eqref{6.24} and \eqref{6.25}, first, by   the fact that
\begin{align}
	&\| x^{2}f_{\le s^{1/4}}\|_{L^{2}} \lesssim s^{1/4}\| xf_{\le s^{1/4}}\|_{L^{2}}\lesssim s^{1/4}s^{\delta},\label{6.33}\\
	&\| f_{\le s^{1/4}}\|_{L^{1}}\lesssim\| 1\|_{L^{2}}\| f_{\le s^{1/4}}\|_{L^{2}}=\Big(\int_{0}^{s^{1/4}} 1dx\Big)^{1/2} \| f_{\le s^{1/4}}\|_{L^{2}}\lesssim s^{1/8}\| f_{\le s^{1/4}}\|_{L^{1}}\lesssim s^{1/8}s^{\delta}\label{6.34}.
\end{align}
Then, from H$\ddot{o}$lder{'}s  inequality and the priori estimate of $f$ \eqref{2.1}, we  get
\begin{align*}
	\|\eqref{6.24} \|_{L^{2}}+\|\eqref{6.25} \|_{L^{2}}
	&\lesssim\beta\int_{0}^{t}\big( s^{2\delta_{N}}\| x^{2}f_{\le s^{1/4}}\|_{L^{2}}+ ss^{3\delta_{N}}\| xf_{\le s^{1/4}}\|_{L^{2}}\big)\big\| e^{is \Delta} f_{\le s^{1/4}}\big\|_{L^{\infty}}ds\\
	&\lesssim\beta\int_{0}^{t} \big(s^{2\delta_{N}}s^{1/4}\| xf_{\le s^{1/4}}\|_{L^{2}}+ ss^{3\delta_{N}}\| xf_{\le s^{1/4}}\|_{L^{2}}\big)\frac{1}{s^{3/2}}\|  f_{\le s^{1/4}}\|_{L^{1}}ds\\
	&\lesssim \beta\int_{0}^{t} \big(s^{2\delta_{N}}s^{1/4}s^{\delta} + ss^{3\delta_{N}}s^{\delta}\big)\frac{1}{s^{3/2}}s^{\frac{1}{8}}s^{\delta} ds\|(\mathcal{E},N,M)\|_{X}^{2}\\
&\lesssim  t^{\frac{3}{4}} \|(\mathcal{E},N,M)\|_{X}^{2},
\end{align*}
where we could choose $3\delta_{N}+2\delta \le \frac{1}{8}$. To estimate \eqref{6.26}, we integrate by parts for $\eta$ by using \eqref{6.1},
\begin{align}
	&\int_{0}^{t}\int_{R^{3}}\frac{\beta}{2}|\xi|\frac{|\xi|}{(1+|\xi|^{2})^{1/2}} s(\nabla_{\xi}\gamma(\xi,\eta))^{2}e^{is\gamma(\xi,\eta)} \nabla_{\eta}\widehat {f_{\le s^{1/4}}}(\xi-\eta,s)\overline{\widehat {f_{\le s^{1/4}}}}(\eta,s)d\eta ds,\label{6.30}\\
	&\int_{0}^{t}\int_{R^{3}}\frac{\beta}{2}|\xi|\frac{|\xi|}{(1+|\xi|^{2})^{1/2}} s(\nabla_{\xi}\gamma(\xi,\eta))^{2}e^{is\gamma(\xi,\eta)}\widehat {f_{\le s^{1/4}}}(\xi-\eta,s) \nabla_{\eta}\overline{\widehat {f_{\le s^{1/4}}}}(\eta,s)d\eta ds.\label{6.31}
\end{align}
The treatments of \eqref{6.30} and \eqref{6.31} are very similar to  \eqref{6.25}, so we directly have
\begin{align*}
	\|\eqref{6.30}\|_{L^{2}}+\|\eqref{6.31}\|_{L^{2}}
	\lesssim  t^{1-3\alpha}\|(\mathcal{E},N,M)\|_{X}^{2}.
\end{align*}
For the estimates of   \eqref{6.27}--\eqref{6.29},  by  H$\ddot{o}$lder{'}s inequality, \eqref{2.2}, and \eqref{6.34}, we have
\begin{align*}
	&\|\eqref{6.27}\|_{L^{2}}+	\|\eqref{6.28}\|_{L^{2}}+\|\eqref{6.29}\|_{L^{2}}\\
&\lesssim\int_{0}^{t}\beta \big(s^{\delta_{N}} \| f_{\le s^{1/4}}\|_{L_{2}}\| e^{is \Delta} f_{\le s^{1/4}}\|_{L^{\infty}}
	+s^{2\delta_{N}}\| xf_{\le s^{1/4}}\|_{L^{2}}\big\| e^{is \Delta} f_{\le s^{1/4}}\big\|_{L^{\infty}}\\
	&\quad\quad+ s^{3\delta_{N}}s\| f_{\le s^{1/4}}\|_{L^{2}}\big\| e^{is \Delta} f_{\le s^{1/4}}\big\|_{L^{\infty}}\big)ds\\
	&\lesssim  \|(\mathcal{E},N,M)\|_{X}^{2}.
\end{align*}
\noindent {\bf For the estimate of} $\|P_{\le k}h_{2}\|_{L_{2}}\lesssim t^{(11/4-3\delta)k} $. \\
From Bernstein{'}s inequality \eqref{1.11},  H$\ddot{o}$lder{'}s inequality and the fact $\| f_{\ge s^{1/4}}\|_{L^{2}}\lesssim \frac{1}{s^{1/4}}s^{\delta}$, we obtain
\begin{align*}
	\|P_{\le k}h_{2}\|_{L^{2}}	
	&=\bigg\|P_{\le k}\int_{0}^{t}\beta|\wedge|^2\frac{(-\Delta)^{1/2}}{(I-\Delta)^{1/2}}e^{{\pm}is\beta(\Delta^{2}-\Delta)^{1/2}}\mathcal{F}^{-1}\big(\widehat {e^{is \Delta}f_{\ge s^{1/4}}}\ast \widehat {e^{-is \Delta}\overline{f}}\big) ds\bigg\|_{L^{2}}\nonumber\\	
	&\lesssim 2^{2k}	\beta\int_{0}^{t} \big\|P_{\le k} (e^{is\Delta} f_{\ge s^{1/4}}\cdot e^{-is\Delta}  f)\big\|_{L^{2}}ds\\
	&\lesssim 2^{2k}2^{(3/4-3\delta)k}\beta \int_{0}^{t} \big\| e^{is\Delta}f_{\ge s^{1/4}}\cdot e^{-is\Delta}  f\big\|_{L^{(3/4-\delta)^{-1}}} ds\\
	&\lesssim 2^{2k}2^{(3/4-3\delta)k}\beta \int_{0}^{t} \| e^{is\Delta}f_{\ge s^{1/4}}\|_{L_{2}}\|e^{-is\Delta}  f\|_{L^{(1/4-\delta)^{-1}}} ds\\
		&\lesssim 2^{(11/4-3\delta)k}\beta\int_{0}^{t} \| e^{is\Delta}f_{\ge s^{1/4}}\|_{L_{2}} \frac{1}{s^{3/4+3\delta }}\| f\|_{L^{(3/4+\delta)^{-1}}}ds\\	
	&\lesssim 2^{(11/4-3\delta)k}\beta\int_{0}^{t}s^{\delta} \frac{1}{s^{1/4}} \frac{1}{s^{3/4+3\delta }}s^{\delta} ds  \\			
	&\lesssim 2^{(11/4-3\delta)k}.
\end{align*}
From  the estimates of \eqref{6.24}--\eqref{6.29} and $\|P_{\le k}h_{2}\|_{L^{2}}\lesssim 2^{(11/4-3\delta)k}$,  the proof of lemma \ref{lem6.2} is completed.
\end{proof}

%
%
%
%
%
\section{Decay Estimate}

In this section we are going to prove the following:
\setcounter{section}{6}\setcounter{equation}{0}

\begin{proposition}\label{prop7.1}
	$t\|e^{it|\alpha\nabla|}G\|_{B_{\infty,1}^{0}}\lesssim  \|(\mathcal{E},N,M)\|_{X}^2 $.
\end{proposition}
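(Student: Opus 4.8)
The plan is to adapt the scheme used for the wave component of the Zakharov system in \cite{ZFJ}: since $G=G_{\pm}$ is built purely from the Schr\"odinger profile $f$, it is insensitive to the magnetic coupling, and the only place where $\mathcal{B}$ enters is through $\partial_{s}f$, which picks up the extra quadratic term $\tfrac{i}{2}\mathcal{E}\times(M_{+}-M_{-})$ --- harmless because $M_{\pm}$ is controlled in $X$. First I would unfold $\|e^{\mp it|\alpha\nabla|}G\|_{B^{0}_{\infty,1}}=\sum_{k}\|P_{k}e^{\mp it|\alpha\nabla|}G\|_{L^{\infty}}$ and record two elementary bounds for the half-wave group: the dispersive one $\|P_{k}e^{\mp i\tau|\alpha\nabla|}h\|_{L^{\infty}}\lesssim \tau^{-1}2^{2k}\|P_{k}h\|_{L^{1}}$ coming from \eqref{2.4}, and the conservative one $\|P_{k}e^{\mp i\tau|\alpha\nabla|}h\|_{L^{\infty}}\lesssim 2^{3k/2}\|P_{k}h\|_{L^{2}}$ (Bernstein plus $L^{2}$-isometry), valid for every $\tau$. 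Writing $G$ through Duhamel, $e^{\mp it|\alpha\nabla|}G(t)=-i\int_{0}^{t}e^{\mp i(t-s)|\alpha\nabla|}|\alpha\nabla|\bigl(\mathcal{E}\overline{\mathcal{E}}\bigr)(s)\,ds$; the initial-data part $e^{\mp it|\alpha\nabla|}g(0)$ is disposed of by the first bound together with the Besov hypotheses $\|\wedge^{2}n_{0}\|_{B^{0}_{1,1}}+\|\wedge n_{1}\|_{B^{0}_{1,1}}\le\epsilon_{0}$.

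The key point is that a direct estimate is impossible: $\|\mathcal{E}\overline{\mathcal{E}}\|_{L^{1}}=\|\mathcal{E}\|_{L^{2}}^{2}$ does not decay, so $\int_{0}^{t/2}(t-s)^{-1}\||\alpha\nabla|(\mathcal{E}\overline{\mathcal{E}})(s)\|_{\dot W^{2,1}}\,ds$ is of size $\gtrsim t^{-1}\cdot t$, which after multiplying by $t$ grows. Hence one must use time oscillation, i.e.\ the space--time resonance structure. In \eqref{4.9} the $s$-phase of the integrand is $s\,\psi_{\pm}(\xi,\eta)$ up to an $s$-independent factor, with $\psi_{\pm}$ as in \eqref{4.13}, so I would integrate by parts in $s$ on the region away from the time-resonant set $\mathcal{T}_{\psi_{\pm}}=\{\psi_{\pm}=0\}$ (dyadically, on $|\psi_{\pm}|\sim 2^{j}$ with $2^{j}$ not too small relative to $|\xi|$ and $s$), recalling also the reduction to frequencies $|\xi|,|\eta|\lesssim s^{\delta_{N}}$ established in the ``High frequency cutoff'' step. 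The boundary term at $s=t$ is a bilinear operator at time $t$ with symbol $|\alpha\xi|/\psi_{\pm}$ applied to $\mathcal{E}(t),\overline{\mathcal{E}}(t)$; estimating it by Lemma \ref{lem2.4} or Lemma \ref{lem2.5} after the resonant cutoff gives $\lesssim\|\mathcal{E}(t)\|_{L^{\infty}}\|\mathcal{E}(t)\|_{H^{N+1}}\lesssim t^{-1-\alpha}\,t^{\delta}\,\|(\mathcal{E},N,M)\|_{X}^{2}$, which beats $t^{-1}$. The boundary term at $s=0$ is the free wave evolution of a fixed bilinear function of $\mathcal{E}_{0}$, handled by the dispersive bound and the weighted hypotheses on $\mathcal{E}_{0}$. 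In the remaining $s$-integral $\partial_{s}$ lands on $\widehat f(\xi-\eta,s)\overline{\widehat f}(\eta,s)$, and by the equation $\partial_{s}f\sim(N_{+}-N_{-})\mathcal{E}+\mathcal{E}\times(M_{+}-M_{-})$, so one gets genuinely cubic source terms of the type $N|\mathcal{E}|^{2}$ and $(\mathcal{E}\times M)\overline{\mathcal{E}}$ (and conjugates). Their $L^{1}$ norm now \emph{does} decay, e.g.\ $\|N|\mathcal{E}|^{2}\|_{L^{1}}\lesssim\|N\|_{L^{2}}\|\mathcal{E}\|_{L^{\infty}}\|\mathcal{E}\|_{L^{2}}\lesssim s^{-1-\alpha}$, so that on $[0,t/2]$ the $(t-s)^{-1}$-dispersive bound and on $[t/2,t]$ the conservative bound $\|e^{\mp i\tau|\alpha\nabla|}h\|_{B^{0}_{\infty,1}}\lesssim\|h\|_{B^{3/2}_{2,1}}$ both close, the $s^{3\delta_{N}}$ loss from the cutoff being absorbed since $3\delta_{N}\le\delta\ll\alpha$.

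On the complementary neighbourhood of $\mathcal{T}_{\psi_{\pm}}$ integration by parts in $s$ is unavailable, but there $\nabla_{\eta}\psi_{\pm}=2\xi$, so away from $\xi=0$ I would integrate by parts in $\eta$ instead; each step gains $s^{-1}|\xi|^{-1}$ at the cost of moving $\nabla_{\eta}$ onto $\widehat f(\xi-\eta,s)$ or $\overline{\widehat f}(\eta,s)$, i.e.\ onto the weighted norms $\|xf\|_{L^{2}}\lesssim s^{\delta}$, $\|x^{2}f\|_{L^{2}}\lesssim s^{1-2\alpha-\delta}$ controlled in $X$, and two steps suffice. For $|\xi|$ small --- where the factor $|\xi|^{-1}$ is dangerous and $\mathcal{T}_{\psi_{\pm}}$ collapses onto $\mathcal{S}_{\psi_{\pm}}=\{\xi=0\}$ --- I would instead use the prefactor $|\alpha\xi|$ in \eqref{4.9} together with Bernstein's inequality and the $2^{3m}$ volume gain on $|\xi|\sim 2^{m}$, summing the dyadic pieces in $m$, the genuine space--time resonant point $\mathcal{R}_{\psi_{\pm}}=\{\xi=\eta=0\}$ being of measure zero and its low--low frequency block being absorbed by a plain $L^{2}\times L^{2}\to L^{1}$ H\"older bound times $2^{3m}$. \textbf{The main obstacle is precisely the transition region around $\mathcal{R}_{\psi_{\pm}}$}, where $|\psi_{\pm}|$ and $|\xi|$ are simultaneously small, so that neither the $s$-integration by parts (bad divisor $1/\psi_{\pm}$) nor the $\eta$-integration by parts (bad factor $1/|\xi|$) works cleanly; one has to balance a low-frequency cutoff in $|\xi|$ against a dyadic cutoff of $|\psi_{\pm}|$ and trace the high-frequency loss $s^{\delta_{N}}$ against the pointwise decay $\|\mathcal{E}(s)\|_{L^{\infty}}\lesssim s^{-1-\alpha}$ of the Schr\"odinger component --- exactly where the parameter constraints \eqref{1.9} are used and where the argument runs parallel to the corresponding estimate in \cite{ZFJ}.
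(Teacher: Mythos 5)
Your proposal goes in a genuinely different direction from the paper (and from \cite{ZFJ}, which the paper cites here without further detail).  The route actually taken in the paper --- spelled out explicitly in the parallel Proposition~\ref{prop7.2} for $H$, and in \cite{ZFJ} for $G$ --- never integrates by parts in the time variable $s$ and never divides by the phase $\psi_\pm$.  Instead one applies the $L^{1}\to L^{\infty}$ dispersive estimate \emph{once}, at time $t$, to the full profile (e.g. $\|e^{\mp it|\alpha\nabla|}G\|_{B^{0}_{\infty,1}}\lesssim t^{-1}\|\wedge^{2}G\|_{B^{0}_{1,1}}$), then estimates the $L^{1}$ norm of the profile by the interpolation $\|h\|_{L^{1}}\lesssim\|xh\|_{L^{2}}^{1/2}\|x^{2}h\|_{L^{2}}^{1/2}$ (together with a physical-space splitting of $f$ into $f_{\le s^{1/8}}$ and $f_{\ge s^{1/8}}$, giving $G=G_{1}+G_{2}$), and finally controls the weighted $L^{2}$ pieces by integrating by parts in $\eta$ only, using the identity $|\xi|=\tfrac12\tfrac{\xi}{|\xi|}\cdot\nabla_\eta\psi_\pm$.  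Your scheme --- slicing the Duhamel integral, applying the $(t-s)^{-1}$ dispersive decay at each slice, then trading the non-decaying quadratic source for cubic terms via an $s$-integration by parts away from $\mathcal{T}_{\psi_\pm}$, and resorting to $\eta$-integration by parts near it --- is the textbook ``space--time resonance dichotomy'', which is a valid general method and is indeed what you'd do for a quasi-resonant quadratic nonlinearity, but it is not what ZFJ do for this particular estimate, so the claim at the end that your argument ``runs parallel to the corresponding estimate in \cite{ZFJ}'' is not accurate.  What the paper's approach buys is precisely that the singular divisor $1/\psi_\pm$ never appears, so the whole difficulty you flag near $\mathcal{R}_{\psi_\pm}$ simply does not arise.

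There is also a genuine unresolved gap in the proposal, which you yourself identify but do not close: the transition region where $|\psi_\pm|$ and $|\xi|$ are simultaneously small.  After $s$-integration by parts you carry the symbol $|\alpha\xi|/\psi_\pm$; to make the Coifman--Meyer / Lemma~\ref{lem2.4} estimates quantitative you must take a dyadic cutoff in $|\psi_\pm|\sim 2^{j}$, and then the Sobolev norms of the cutoff symbol in Lemma~\ref{lem2.5} grow in $j$, while the gain from the $|\alpha\xi|$ prefactor and Bernstein on $|\xi|\sim 2^{k}$ must be strong enough to sum both the $j$- and the $k$-pieces.  This double dyadic summation, and its interaction with the high-frequency cutoff $|\xi|,|\eta|\lesssim s^{\delta_N}$, is exactly the part of the argument you leave as ``one has to balance'', and it is not obvious that it closes with the constraints \eqref{1.9} without further input.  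The paper's $L^{1}$-interpolation plus frequency-IBP argument sidesteps this entirely, which is presumably why it was chosen.

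Minor additional remarks: (i) the boundary term of the $s$-integration by parts at $s=t$ should be estimated in $B^{0}_{\infty,1}$, not just $L^{2}$; your use of Lemma~\ref{lem2.4} gives an $L^{2}$ bound, so you implicitly need to interpose a Bernstein step and sum over $k\lesssim s^{\delta_N}$, and the resulting $2^{3k/2}$ factors need to be accounted for explicitly; (ii) the boundary term at $s=0$ also inherits the singular $1/\psi_\pm$ symbol, so it is not simply ``the free wave evolution of a fixed bilinear function of $\mathcal{E}_{0}$'' estimated by the dispersive bound --- the same cutoff machinery is needed there.
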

The detailed process of the proof can be found in \cite{ZFJ}.
\begin{proposition}\label{prop7.2}
	$t\|e^{i\beta(\Delta^{2}-\Delta)^{1/2}t}H\|_{L^{\infty}}\lesssim \|(\mathcal{E},N,M)\|_{X}^2 $.
\end{proposition}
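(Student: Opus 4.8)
The plan is to follow the space--time resonance scheme carried out for $G$ in Proposition~\ref{prop7.1} (see \cite{ZFJ}), the genuinely new features being the Klein--Gordon--type dispersion relation $p(\xi)=|\xi|\sqrt{|\xi|^{2}+1}$, for which the half--wave bound \eqref{2.3} is replaced by the decay estimates of Lemmas~\ref{lem7.1} and~\ref{lem2.7}, and the symbol $m(\xi)=\frac{|\xi|}{(1+|\xi|^{2})^{1/2}}|\xi|^{2}$ appearing in \eqref{4.10}, which vanishes cubically at $\xi=0$ and behaves like $\wedge^{2}$ for large frequencies. Combining the free propagator with the oscillatory factor carried by $\gamma$ in \eqref{4.14}, the quantity to be estimated is
\begin{align*}
	e^{i\beta(\Delta^{2}-\Delta)^{1/2}t}H(t)=\beta\int_{0}^{t}e^{i\beta(\Delta^{2}-\Delta)^{1/2}(t-s)}\,m(|\nabla|)\big(\mathcal{E}(s)\times\overline{\mathcal{E}(s)}\big)\,ds,
\end{align*}
and the goal is to bound its $L^{\infty}$ norm by $t^{-1}\|(\mathcal{E},N,M)\|_{X}^{2}$; throughout we may use the high--frequency truncation $|\xi|,|\eta|\lesssim s^{\delta_{N}}$ of Section~\ref{sec1.1}.

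I would split the $s$--integral at $s=t/2$. On $[0,t/2]$ one has $t-s\sim t$, so Lemma~\ref{lem7.1} supplies the gain $\|e^{i\beta(\Delta^{2}-\Delta)^{1/2}(t-s)}g\|_{L^{\infty}}\lesssim t^{-3/2}\|\wedge^{1/2}g\|_{L^{1}}$, while on $[t/2,t]$ one uses this same gain for $t-s\ge1$ and plain Sobolev embedding (at negligible cost in frequency, thanks to the truncation) for $t-s\le1$. Since merely taking absolute values inside the bilinear term and using Hölder together with the dispersive and weighted bounds of the $X$--norm only yields decay of order $t^{-1/2}$, one must genuinely exploit the oscillation: away from $\xi=0$ one integrates by parts in $\eta$ by means of the identity \eqref{6.1}, $|\xi|=\tfrac12\nabla_{\eta}\gamma(\xi,\eta)$, each step gaining a factor $(s|\xi|^{2})^{-1}$ in exchange for a derivative $\nabla_{\eta}$ on the profiles, i.e.\ an $x$--weight controlled by $\|xf\|_{L^{2}}$ and $\|x^{2}f\|_{L^{2}}$; near the time--resonant set, where $|\gamma|$ is bounded below, one integrates by parts in $s$ instead, using $\mathcal{E}N+\mathcal{E}M=e^{it\Delta}\partial_{s}f$ and $\beta(-\Delta)^{1/2}(I-\Delta)^{-1/2}\nabla\times(\nabla\times(\mathcal{E}\times\overline{\mathcal{E}}))=e^{i\beta(\Delta^{2}-\Delta)^{1/2}t}\partial_{s}h$ as in Section~\ref{sec1.1}, so that the improved cubic decay $\|\mathcal{E}N+\mathcal{E}M\|_{L^{\infty}}\lesssim s^{-2-\alpha}$ takes over. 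The low--frequency region $|\xi|$ small, where the $|\xi|^{-2}$ produced by $\eta$--integration by parts is dangerous, is handled using $m(\xi)=O(|\xi|^{3})$ and Bernstein's inequality on $P_{\le k}$, exactly as in the treatment of $\|P_{\le k}h_{2}\|_{L^{2}}$ in Lemma~\ref{lem6.2}; the extra decay $t^{-3/2}$ granted by Lemma~\ref{lem7.1}, versus the wave decay $t^{-1}$, is precisely what allows one to close in $L^{\infty}$ rather than in the Besov space $B_{\infty,1}^{0}$ used for $G$.

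The main obstacle is the interplay, in the range $s\in[t/2,t]$ with $|\xi|$ intermediate, between the several integrations by parts that are unavoidable (the crude bound giving only $t^{-1/2}$) and the $|\xi|^{-2}$ singularity they generate, which must be balanced against the cubic vanishing of $m$ and against the fact that only two $x$--derivatives of $f$ are available in the $X$--norm; closing this chain at exactly the decay rate $t^{-1}$ with no $\delta$--loss is what dictates the parameter relations \eqref{1.9}. A secondary technical point is that $p(\xi)=|\xi|\sqrt{|\xi|^{2}+1}$ is wave--like as $\xi\to0$ and fourth--order--like as $|\xi|\to\infty$, so Lemma~\ref{lem7.1} must be applied through its two cases $\psi_{0}$ and $\psi_{\infty}$ according to the frequency regime.
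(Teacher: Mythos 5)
Your overall toolkit matches the paper's: the dispersive estimate of Lemma~\ref{lem7.1}, the interpolation $\|g\|_{L^{1}}\lesssim\|xg\|_{L^{2}}^{1/2}\|x^{2}g\|_{L^{2}}^{1/2}$, the identity \eqref{6.1} to trade powers of $|\xi|$ for $\eta$--integrations by parts, and the high--frequency cutoff $|\xi|,|\eta|\lesssim s^{\delta_{N}}$. But the way you organize the time integral does not close, and this is a genuine gap rather than a detail.

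You rewrite $e^{it\beta(\Delta^{2}-\Delta)^{1/2}}H(t)$ in Duhamel form, split the $s$--integral at $t/2$, and then apply Lemma~\ref{lem7.1} at each time slice with the retarded variable $t-s$, i.e.\ a factor $(t-s)^{-3/2}$. The trouble is that $(t-s)^{-3/2}$ is not locally integrable near $s=t$, so the region $s\in[t/2,t]$ forces you to extract decay in $s$ from the integrand to compensate. The sharp rate one can get for the time--slice integrand in $L^{1}$ from the $X$--norm bounds, by interpolating $\|x\,\cdot\|_{L^{2}}$ (which, after integrating by parts in $\eta$, is $\lesssim s^{-1}$ as in \eqref{7.13}--\eqref{7.14}, \eqref{7.6}) against $\|x^{2}\,\cdot\|_{L^{2}}$ (which is only $\lesssim 1$, as in \eqref{7.15}--\eqref{7.22}), is $s^{-1/2}$. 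Plugging this into your split gives $\int_{t/2}^{t-1}(t-s)^{-3/2}s^{-1/2}\,ds\sim t^{-1/2}$, not the required $t^{-1}$; in fact $\int_{0}^{t}(t-s)^{-3/2}s^{-1/2}\,ds$ diverges outright, so no placement of the cut fixes this. To get $s^{-1}$ for the slice integrand you would need an extra power of $s^{-1}$ in the two--weight bound, which is not available from $\|x^{2}f\|_{L^{2}}\lesssim t^{1-2\alpha-\delta}$ alone, and your side remark about integrating by parts in $s$ near the time--resonant set does not supply this either. The paper's proof avoids the issue altogether: it applies Lemma~\ref{lem7.1} once, at the full time $t$, directly to $e^{it\beta(\Delta^{2}-\Delta)^{1/2}}H$, so the entire dispersive gain $t^{-3/2}$ is harvested in a single step; the remaining task is then to show $\|\wedge^{1/2}H(t)\|_{L^{1}}\lesssim\beta\,t^{1/2}$, for which the slice rate $s^{-1/2}$ is exactly what is needed. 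In other words, the sub--linear growth of $\|\wedge^{1/2}H\|_{L^{1}}$ is absorbed by $t^{-3/2}$, and no $t/2$ split is required or helpful.

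Two smaller points. First, you claim each $\eta$--integration by parts gains $(s|\xi|^{2})^{-1}$; since $\nabla_{\eta}\gamma=2\xi$, the gain is actually $(s|\xi|)^{-1}$ per step, i.e.\ you must spend a factor $|\xi|$ from the symbol to generate $\nabla_{\eta}\gamma$ before dividing by $is\nabla_{\eta}\gamma$. Second, the low--frequency region is not problematic here, so the $P_{\le k}$ and Bernstein machinery from Lemma~\ref{lem6.2} is not needed in this proposition: after all the $\eta$--integrations by parts, the residual multipliers (the $m_{1}(\xi,\eta)$ appearing in \eqref{7.13}--\eqref{7.14} and their analogues in the two--weight case) still have nonnegative homogeneity in $|\xi|$, precisely because the symbol $|\xi|^{3}(1+|\xi|^{2})^{-1/2}$ vanishes cubically at $\xi=0$ and only finitely many powers of $|\xi|$ are traded away. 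The frequency--regime distinction ($\psi_{0}$ vs.\ $\psi_{\infty}$) enters only inside Lemma~\ref{lem7.1} itself, not again in this proof.
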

\noindent {\bf Proof of Proposition \ref{prop7.2}.}\\
 Let us start with an $L^{\infty}\rightarrow L^1$ estimate of $e^{i\beta(\Delta^{2}-\Delta)^{1/2}t}H$,  which can be found in lemma \ref{lem7.1}. Next, we focus on the estimate of $\|\wedge^{\frac{1}{2}}H\|_{L^{1}}$, we have
\begin{align}
	\|\wedge^{\frac{1}{2}}H\|_{L^{1}}\lesssim \beta\int_{0}^{t}\||\wedge|^{\frac{5}{2}}(-\Delta)^{1/2}(I-\Delta)^{-1/2}e^{{\pm}is\beta(\Delta^{2}-\Delta)^{1/2}}(e^{is \Delta}f \cdot e^{-is \Delta}\overline{f})\|_{L^{1}} ds.\label{7.1}
\end{align}
Since $\|f\|_{L^{1}}\lesssim \|xf\|_{L^{2}}^{1/2}\|x^{2}f\|_{L^{2}}^{1/2}$, so
\begin{align*}
	\eqref{7.1}\lesssim \beta\int_{0}^{t} &\big\|x|\wedge|^{\frac{5}{2}}(-\Delta)^{1/2}(I-\Delta)^{-1/2}e^{-is\beta(\Delta^{2}-\Delta)^{1/2}}e^{is \Delta}f 	e^{-is \Delta}\overline{f} \big\|_{L_{x}^{2}}^{1/2}\\
	&\big\|x^{2}|\wedge|^{\frac{5}{2}}(-\Delta)^{1/2}(I-\Delta)^{-1/2}e^{-is\beta(\Delta^{2}-\Delta)^{1/2}}e^{is \Delta}f 	e^{-is \Delta}\overline{f} \big\|_{L_{x}^{2}}^{1/2}ds.
\end{align*}
We define
\begin{align}
	&\big\|x  |\wedge|^{\frac{5}{2}}(-\Delta)^{1/2}(I-\Delta)^{-1/2}e^{{\pm}is\beta(\Delta^{2}-\Delta)^{1/2}}e^{is \Delta}f 	e^{-is \Delta}\overline{f} \big\|_{L_{x}^{2}},\label{7.2}\\
	&\big\|x^{2}|\wedge|^{\frac{5}{2}}(-\Delta)^{1/2}(I-\Delta)^{-1/2}e^{{\pm}is\beta(\Delta^{2}-\Delta)^{1/2}}e^{is \Delta}f 	e^{-is \Delta}\overline{f} \big\|_{L_{x}^{2}}.\label{7.3}
\end{align}
\noindent {\bf For the estimate of  \eqref{7.2}.}
From Plancharel{'}s inequality we have
\begin{align}
\widehat{\eqref{7.2}}=	&\int_{R^{3}}|\xi|(1+|\xi|^{2})^{-1/2}|\xi|^{\frac{5}{2}}e^{is\gamma(\xi,\eta)}\nabla_{\xi}\widehat f(\xi-\eta,s) \overline{\widehat  f}(\eta,s)d\eta,\label{7.4}\\
	+&\int_{R^{3}}|\xi|(1+|\xi|^{2})^{-1/2}|\xi|^{\frac{5}{2}}e^{is\gamma(\xi,\eta)}is\nabla_{\xi}\gamma(\xi,\eta)\widehat f(\xi-\eta,s) \overline{\widehat  f}(\eta,s)d\eta,\label{7.5} \\
	+&\int_{R^{3}}\frac{\frac{7}{2}|\xi|^{\frac{5}{2}}+\frac{5}{2}|\xi|^{\frac{9}{2}}}{(1+|\xi|^{2})^{3/2}}e^{is\gamma(\xi,\eta)}\widehat f(\xi-\eta,s) \overline{\widehat  f}(\eta,s)d\eta.\label{7.6}
\end{align}
Using one of the factors $|\xi|$ in \eqref{6.1}, integrating by parts for $\eta$   in \eqref{7.4} and \eqref{7.5}, we can  obtain as main contributions:
\begin{align} &\int_{R^{3}}\frac{1}{4}|\xi|(1+|\xi|^{2})^{-1/2}|\xi|^{\frac{1}{2}}\frac{1}{is}\nabla_{\eta}\gamma(\xi,\eta)e^{is\gamma(\xi,\eta)}\nabla_{\xi}\nabla_{\eta}\widehat f(\xi-\eta,s)\overline{\widehat  f}(\eta,s)d\eta,\label{7.7}\\
	&\int_{R^{3}}\frac{1}{4}|\xi|(1+|\xi|^{2})^{-1/2}|\xi|^{\frac{1}{2}}\frac{1}{is}\nabla_{\eta}\gamma(\xi,\eta)e^{is\gamma(\xi,\eta)}\nabla_{\xi}\widehat f(\xi-\eta,s) \nabla_{\eta}\overline{\widehat  f}(\eta,s)d\eta,\label{7.8}	\\ &\int_{R^{3}}\frac{1}{4}|\xi|(1+|\xi|^{2})^{-1/2}|\xi|^{\frac{1}{2}}\nabla_{\xi}\gamma(\xi,\eta)\nabla_{\eta}\gamma(\xi,\eta)e^{is\gamma(\xi,\eta)}\nabla_{\eta}\widehat f(\xi-\eta,s) \overline{\widehat  f}(\eta,s)d\eta,\label{7.9}\\ &\int_{R^{3}}\frac{1}{4}|\xi|(1+|\xi|^{2})^{-1/2}|\xi|^{\frac{1}{2}}\nabla_{\xi}\gamma(\xi,\eta)\nabla_{\eta}\gamma(\xi,\eta)e^{is\gamma(\xi,\eta)}\widehat f(\xi-\eta,s) \nabla_{\eta}\overline{\widehat  f}(\eta,s)d\eta.\label{7.10}
\end{align}
 \eqref{7.9} and  \eqref{7.10}  can be handled in
a similar fashion, so  we only consider  \eqref{7.9}. Integrating by parts for $\eta$ again  in \eqref{7.9}, we obtain
\begin{align} &\int_{R^{3}}\frac{1}{4}|\xi|(1+|\xi|^{2})^{-1/2}|\xi|^{\frac{1}{2}}\frac{1}{is}\nabla_{\xi}\gamma(\xi,\eta)e^{is\gamma(\xi,\eta)}\nabla_{\eta}^{2}\widehat f(\xi-\eta,s) \overline{\widehat  f}(\eta,s)d\eta,\label{7.11}\\
	&\int_{R^{3}}\frac{1}{4}|\xi|(1+|\xi|^{2})^{-1/2}|\xi|^{\frac{1}{2}}\frac{1}{is}\nabla_{\xi}\gamma(\xi,\eta)e^{is\gamma(\xi,\eta)}\nabla_{\eta}\widehat f(\xi-\eta,s) \nabla_{\eta}\overline{\widehat  f}(\eta,s)d\eta\label{7.12}.
\end{align}
\eqref{7.7}, \eqref{7.8}, \eqref{7.11}, \eqref{7.12} can obtain as main contributions:
\begin{align}
	&\int_{R^{3}}\frac{1}{s} m_{1}(\xi,\eta)e^{is\gamma(\xi,\eta)}\nabla_{\eta}^{2}\widehat f(\xi-\eta,s) \overline{\widehat  f}(\eta,s)d\eta,\label{7.13}\\
	&\int_{R^{3}}\frac{1}{s} m_{1}(\xi,\eta)e^{is\gamma(\xi,\eta)}\nabla_{\eta}\widehat f(\xi-\eta,s) \nabla_{\eta}\overline{\widehat  f}(\eta,s)d\eta,\label{7.14}
\end{align}
where $ m_{1}(\xi, \eta)$ denotes symbols with homogenous bounds of order $\frac{3}{2}$ for large frequencies and which can be bound by $s^{\frac{3}{2}\delta_{N}}$. For the estimate of  \eqref{7.13},  form H$\ddot{o}$lder{'}s inequality and  the priori eatimate of $f$ \eqref{2.1}, we  have
\begin{align*}
	\|\eqref{7.13}\|_{L^{2}}
	&\lesssim \frac{1}{s}s^{\frac{3}{2}\delta_{N}}\| x^{2}f\|_{L^{2}}\| e^{is \Delta}f\|_{L^{\infty}}\\
&\lesssim \frac{1}{s}s^{\frac{3}{2}\delta_{N}}s^{1-2\alpha-\delta}\frac{1}{s^{1+\alpha}}\|(\mathcal{E},N,M)\|_{X}^{2}\\
&\lesssim \frac{1}{s} \|(\mathcal{E},N,M)\|_{X}^{2},	
\end{align*}
where  $3\alpha \ge \frac{3}{2}\delta_{N}-\delta$. Applying  dispersive estimate \eqref{2.1} and $\|f\|_{L^{4/3}} \lesssim \|xf\|_{L^{2}}^{\frac{1}{4}}\|x^2f\|_{L^{2}}^{\frac{1}{4}}$, we have
\begin{align*}
	\|\eqref{7.14}\|_{L^{2}}
	&\lesssim  \frac{1}{s}s^{\frac{3}{2}\delta_{N}}\|e^{is \Delta}xf\|_{L^{4}}^{2}	
\lesssim  \frac{1}{s}s^{\frac{3}{2}\delta_{N}}\frac{1}{s^{3/2}}\|xf\|_{L^{4/3}}^{2}\\
	&\lesssim  \frac{1}{s}s^{\frac{3}{2}\delta_{N}}\frac{1}{s^{3/2}}\|x^{2}f\|_{L^{2}}^{3/2}\|xf\|_{L^{2}}^{1/2}\\
&\lesssim \frac{1}{s} \|(\mathcal{E},N,M)\|_{X}^{2},	
\end{align*}
where we could choose  $3\alpha \ge \frac{3}{2}\delta_{N}-\delta$.
Finally, form H$\ddot{o}$lder{'}s inequality and  the priori eatimate of $f$ \eqref{2.1}, we  get
\begin{align*}	\|\eqref{7.6}\|_{L^{2}}
	&\lesssim s^{\frac{3}{2}\delta_{N}}\| f\|_{L^{2}}\| e^{is \Delta}f\|_{L^{\infty}}
	\lesssim \frac{1}{s} \|(\mathcal{E},N,M)\|_{X}^{2},
\end{align*}
where $\alpha \ge \frac{3}{2}\delta_{N}+\delta$.\\
\noindent {\bf For the estimate of  \eqref{7.3}.} From Plancharel{'}s inequality we have
\begin{align}
\widehat{\eqref{7.3}}=	+&\int_{R^{3}}|\xi|(1+|\xi|^{2})^{-1/2}|\xi|^{\frac{5}{2}}e^{is\gamma(\xi,\eta)}\nabla_{\xi}^{2}\widehat f(\xi-\eta,s) \overline{\widehat  f}(\eta,s)d\eta,\label{7.15}\\
	+&\int_{R^{3}}|\xi|(1+|\xi|^{2})^{-1/2}|\xi|^{\frac{5}{2}} is\nabla_{\xi}\gamma(\xi,\eta)e^{is\gamma(\xi,\eta)} \nabla_{\xi}\widehat f(\xi-\eta,s) \overline{\widehat  f}(\eta,s)d\eta,\label{7.16}\\
	+&\int_{R^{3}}|\xi|(1+|\xi|^{2})^{-1/2}|\xi|^{\frac{5}{2}} (is\nabla_{\xi}\gamma(\xi,\eta))^{2}e^{is\gamma(\xi,\eta)}\widehat f(\xi-\eta,s) \overline{\widehat  f}(\eta,s)d\eta,\label{7.17}\\	+&\int_{R^{3}}\frac{\frac{35}{4}|\xi|^{\frac{3}{2}}+\frac{45}{4}|\xi|^{\frac{7}{2}}+\frac{15}{4}|\xi|^{\frac{11}{2}}}{(1+|\xi|^{2})^{5/2}}e^{is\gamma(\xi,\eta)} \widehat f(\xi-\eta,s) \overline{\widehat  f}(\eta,s)d\eta, \label{7.18}\\
	+&\int_{R^{3}}\frac{\frac{7}{2}|\xi|^{\frac{5}{2}}+\frac{5}{2}|\xi|^{\frac{9}{2}}}{(1+|\xi|^{2})^{3/2}} e^{is\gamma(\xi,\eta)}\nabla_{\xi}\widehat f(\xi-\eta,s) \overline{\widehat  f}(\eta,s)d\eta,\label{7.19}\\		
	 +&\int_{R^{3}}\frac{\frac{7}{2}|\xi|^{\frac{5}{2}}+\frac{5}{2}|\xi|^{\frac{9}{2}}}{(1+|\xi|^{2})^{3/2}}is\nabla_{\xi}\gamma(\xi,\eta) e^{is\gamma(\xi,\eta)}\widehat f(\xi-\eta,s) \overline{\widehat  f}(\eta,s)d\eta.\label{7.20}
\end{align}
From H$\ddot{o}$lder{'}s inequality and priori eatimate of $f$ \eqref{2.1} to estimate \eqref{7.15} and \eqref{7.16},  we obtain
\begin{align*}
	\|\eqref{7.15}\|_{L^{2}}+\|\eqref{7.16}\|_{L^{2}}
	&\lesssim  s^{\frac{5}{2}\delta_{N}}\| x^{2}f\|_{L^{2}}\| e^{is \Delta}f\|_{L^{\infty}}+ss^{\frac{7}{2}\delta_{N}}\| xf\|_{L_{2}}\| e^{is \Delta}f\|_{L^{\infty}}\\
	&\lesssim   \big(s^{\frac{5}{2}\delta_{N}}s^{1-2\alpha-\delta}\frac{1}{s^{1+\alpha}}+ss^{\frac{7}{2}\delta_{N}}s^{\delta}\frac{1}{s^{1+\alpha}}\big)\|(\mathcal{E},N,M)\|_{X}^{2}	\\
&\lesssim  \|(\mathcal{E},N,M)\|_{X}^{2}.	
\end{align*}
where $\frac{7}{2}\delta_{N}+\delta\le \alpha$.  Integrating by parts for $\eta$   in \eqref{7.17}, we get
\begin{align}
	&\int_{R^{3}}\frac{1}{4}|\xi|^{\frac{3}{2}}(1+|\xi|^{2})^{-1/2} (\nabla_{\xi}\gamma(\xi,\eta))^{2}\nabla_{\eta}\gamma(\xi,\eta) se^{is\gamma(\xi,\eta)}\nabla_{\eta}\widehat f(\xi-\eta,s) \overline{\widehat  f}(\eta,s)d\eta,\label{7.21}\\
	&\int_{R^{3}}\frac{1}{4}|\xi|^{\frac{3}{2}}(1+|\xi|^{2})^{-1/2} (\nabla_{\xi}\gamma(\xi,\eta))^{2}\nabla_{\eta}\gamma(\xi,\eta) se^{is\gamma(\xi,\eta)}\widehat f(\xi-\eta,s) \nabla_{\eta}\overline{\widehat  f}(\eta,s)d\eta.\label{7.22}
\end{align}
 \eqref{7.21} and \eqref{7.22} are similar to \eqref{7.16},
 \eqref{7.18} is similar to \eqref{7.6}, so we directly get
 \begin{align*}
	\|\eqref{7.18}\|_{L^{2}}+\|\eqref{7.21}\|_{L^{2}}+\|\eqref{7.22}\|_{L^{2}}
	\lesssim  t^{1-3\alpha}\|(\mathcal{E},N,M)\|_{X}^{2}.
\end{align*}
  Similarly,  we also get
\begin{align*}
	\|\eqref{7.19}\|_{L^{2}}+	\|\eqref{7.20}\|_{L^{2}}
	&\lesssim  s^{\frac{5}{2}\delta_{N}}\big(\| xf\|_{L_{2}}\| e^{is \Delta}f\|_{L^{\infty}}
	+s\| f\|_{L_{2}}\| e^{is \Delta}f\|_{L^{\infty}}\big)
\lesssim  \|(\mathcal{E},N,M)\|_{X}^{2}.	
\end{align*}
In short
\begin{align*}
	\|\wedge^{\frac{1}{2}}H\|_{L^{1}}\lesssim \int_{0}^{t}\beta(\frac{1}{s})^{\frac{1}{2}}ds\|(\mathcal{E},N,M)\|_{X}^{2}\lesssim \beta t^{\frac{1}{2}}\|(\mathcal{E},N,M)\|_{X}^{2}.
\end{align*}
Thus, \begin{align}\|e^{i\beta(\Delta^{2}-\Delta)^{1/2}t}H\|_{L^{\infty}}\lesssim t^{-\frac{3}{2}}\|\wedge^{\frac{1}{2}}H\|_{L^{1}}\lesssim \beta t^{-\frac{3}{2}}t^{\frac{1}{2}}\|(\mathcal{E},N,M)\|_{X}\lesssim  \frac{1}{t}\|(\mathcal{E},N,M)\|_{X}.\label{7.23}
\end{align}

From  the estimates of \eqref{7.4}--\eqref{7.22},  the proof of Proposition \ref{prop7.2} is completed.
$\hfill\Box$\\

%
%
%
%
%

\section{Weighted Estimates for the Schr$\ddot{o}$dinger Component}

The purpose of this section is to prove:
\begin{proposition}\label{prop8.1}
	 $t^{-\delta}\|xF\|_{L^{2}}+t^{-1+2\alpha+\delta}\|x^{2}F\|_{L^{2}}\lesssim \|(\mathcal{E},N,M)\|_{X}^2$.
\end{proposition}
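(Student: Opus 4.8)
The plan is to decompose $F=F_1+F_2$ according to \eqref{4.7} and \eqref{4.8}, where $F_1$ is the Schr\"odinger--Langmuir interaction carrying the phase $\phi_\pm$ and $F_2$ is the Schr\"odinger--magnetic interaction carrying the phase $\rho_\pm$. The interaction producing $F_1$ coincides with the one in the classical Zakharov system, so the bounds $t^{-\delta}\|xF_1\|_{L^{2}}+t^{-1+2\alpha+\delta}\|x^{2}F_1\|_{L^{2}}\lesssim\|(\mathcal{E},N,M)\|_{X}^{2}$ are precisely the weighted estimates for the Schr\"odinger profile established in \cite{ZFJ}, and I would invoke them. Thus the work lies entirely in proving the same two bounds for $F_2$. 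Since $\rho_\pm$ differs from $\phi_\pm$ only in the purely $\eta$-dependent term ($\pm\beta|\eta|\sqrt{|\eta|^{2}+1}$ instead of $\pm\alpha|\eta|$), the whole scheme of \cite{ZFJ} transcribes, and the only genuinely new point is to recover the quantitative lower bounds it uses: a lower bound for $|\rho|$ away from $\mathcal{T}_\rho$ and for $|\nabla_\eta\rho|$ away from $\mathcal{S}_\rho$. This is where the hypothesis $\beta\neq1$ enters. Throughout, the two signs are treated identically, and after the high-frequency cutoff of Section \ref{sec1.1} all frequencies $|\xi-\eta|$, $|\eta|$ are $\lesssim s^{\delta_N}$.

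For $\|xF_2\|_{L^{2}}$ I would differentiate $\widehat{F_2}$ in $\xi$. Because $\rho$ is affine in $\xi$ (so $\nabla_\xi\rho=2\eta$, $\nabla_\xi^{2}\rho=0$), only two terms appear: the one where $\nabla_\xi$ lands on $\widehat f(\xi-\eta)$, and the one where $\nabla_\xi$ lands on $e^{is\rho}$ and produces the factor $is\nabla_\xi\rho=2is\eta$. The first is a bilinear operator in $xf$ and $h$ whose multiplier, after the quadratic and half-wave oscillations are absorbed into the corresponding linear evolutions, is constant; hence by Lemma \ref{lem2.4}, the $L^{2}$ unitarity of $e^{is\Delta}$, the bound $\|xf\|_{L^{2}}\lesssim s^\delta$ and the dispersive decay $\|e^{\mp is\beta(\Delta^{2}-\Delta)^{1/2}}h\|_{L^{\infty}}\lesssim s^{-1}$ (both built into the $X$-norm) it is $\lesssim\int_0^t s^{\delta}s^{-1}\,ds\lesssim t^\delta$. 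The second term carries the dangerous $s$; here I would integrate by parts in $\eta$, using $2s\eta_j\,e^{is\rho}=\frac{2\eta_j}{i}\frac{\nabla_\eta\rho}{|\nabla_\eta\rho|^{2}}\cdot\nabla_\eta e^{is\rho}$ to cancel it, which yields a term with a new symbol of size $\lesssim s^{\delta_N}$ (built from $\eta$, $\nabla_\eta\rho$, $|\nabla_\eta\rho|^{-1}$), a term with $\nabla_\eta$ on $\widehat f(\xi-\eta)$ (i.e. $xf$), and a term with $\nabla_\eta$ on $\widehat h(\eta)$ (i.e. $xh$), all controlled by the $X$-norm ($\|xf\|_{L^{2}}\lesssim s^\delta$, $\|xh\|_{H^{1}}\lesssim1$) with at most an $s^\delta$ loss. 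This requires $|\nabla_\eta\rho|$ bounded below, hence is valid off a fixed neighbourhood of $\mathcal{S}_\rho$; near $\mathcal{S}_\rho$ but away from $\mathcal{T}_\rho$ I would integrate by parts in $s$ instead, using $|\rho|\gtrsim1$ and disposing of the boundary and interior terms by Lemma \ref{lem2.4} together with $e^{is\Delta}\partial_s f=\mathcal{E}N+\mathcal{E}M$ and the analogous identity for $\partial_s h$; and on the remaining neighbourhood of $\mathcal{R}_\rho$, which lies in $\{|\eta|\ll1\}$, the factor $|\eta|$ itself supplies the gain. Collecting, $\|xF_2\|_{L^{2}}\lesssim t^\delta\|(\mathcal{E},N,M)\|_{X}^{2}$.

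For $\|x^{2}F_2\|_{L^{2}}$ I would apply $\nabla_\xi^{2}$; since $\nabla_\xi^{2}\rho=0$ only three families of terms arise: $\nabla_\xi^{2}$ on $\widehat f(\xi-\eta)$, which is bilinear in $x^{2}f$ and $h$; one $\nabla_\xi$ on the exponential and one on $\widehat f$, carrying $s\eta\cdot xf\cdot h$; and two $\nabla_\xi$ on the exponential, carrying $(s\eta)^{2}f\cdot h$. The first is estimated directly by Lemma \ref{lem2.4}, the $L^{2}$ unitarity of $e^{is\Delta}$, $\|x^{2}f\|_{L^{2}}\lesssim s^{1-2\alpha-\delta}$ and $\|e^{\mp is\beta(\Delta^{2}-\Delta)^{1/2}}h\|_{L^{\infty}}\lesssim s^{-1}$, giving $\int_0^t s^{1-2\alpha-\delta}s^{-1}\,ds\lesssim t^{1-2\alpha-\delta}$. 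The other two are handled by the same devices as the corresponding contributions to Propositions \ref{prop6.2} and \ref{prop7.2}: integrate by parts in $\eta$ once, respectively twice, each step removing a power of $s$ up to $s^{\delta_N}$-factors and up to terms with $\nabla_\eta$ on $\widehat f$ or $\widehat h$ (producing $xf$, $x^{2}f$, $xh$). As in Lemmas \ref{lem6.1}--\ref{lem6.2}, one also needs the physical-space splitting $f=f_{\le s^{1/4}}+f_{\ge s^{1/4}}$ for the pieces in which the weight $x^{2}$ has forced an extra $\eta$-derivative onto a factor whose bare $L^{2}$ norm would grow too fast: the improved bounds of Lemma \ref{lem6.1} (e.g. $\|x^{2}f_{\le s^{1/4}}\|_{L^{2}}\lesssim s^{1/4}s^\delta$) supply the gain on the low-frequency piece, while the high-frequency piece is controlled through $\|f_{\ge s^{1/4}}\|_{L^{2}}\lesssim s^{-1/4}s^\delta$ and Bernstein's inequality, as in the proof of Lemma \ref{lem6.2}.

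The step I expect to be the main obstacle is the control, near $\mathcal{R}_\rho$, of the term carrying $s\nabla_\xi\rho=2s\eta$ and, for $x^{2}F_2$, of the term carrying $(s\nabla_\xi\rho)^{2}$. Unlike the magnetic-output phase $\gamma$ of Propositions \ref{prop6.2} and \ref{prop7.2}, where $\nabla_\eta\gamma=2\xi$ makes integration by parts in $\eta$ immediate, here $\nabla_\eta\rho_\pm=2(\xi-\eta)\pm\beta p'(|\eta|)\tfrac{\eta}{|\eta|}$ (with $p(r)=r\sqrt{r^{2}+1}$) is not a multiple of the quantity $2\eta$ one needs to absorb; one must first locate precisely where $|\nabla_\eta\rho|$ degenerates, check that this confines $(\xi,\eta)$ to a small neighbourhood of $\mathcal{R}_\rho$, and there fall back on integration by parts in time via a substitute lower bound of the form $|\rho|\gtrsim|\eta|^{2}$. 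Establishing both lower bounds uniformly for $\beta\in(0,\infty)\setminus\{1\}$ --- in particular at the large frequencies $|\eta|\sim s^{\delta_N}$, where the group velocities of the Schr\"odinger and magnetic waves could otherwise coincide --- and fixing a cutoff around $\mathcal{R}_\rho$ of size independent of $s$, is the technical heart of the argument; $\beta=1$ is precisely the borderline value for which these bounds fail, in agreement with the exceptional case already isolated for the energy estimate.
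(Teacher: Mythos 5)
Your high-level structure is right --- split $F=F_1+F_2$, inherit $F_1$ from \cite{ZFJ}, treat $F_2$ analogously --- and you correctly locate the crux: once $\nabla_\xi$ lands on $e^{is\rho}$ it produces $is\nabla_\xi\rho=2is\eta$, and $\nabla_\eta\rho$ is not a multiple of $\eta$, so a naive frequency integration by parts does not absorb it. But the mechanism you propose to fix this is not the paper's, and it has a concrete hole exactly at the place you flag as your ``main obstacle''. The paper never localizes around $\mathcal{S}_\rho$, $\mathcal{T}_\rho$, $\mathcal{R}_\rho$: its engine is the single algebraic identity \eqref{8.1}, which writes $\nabla_\xi\rho=2\eta$ \emph{globally} as a bounded coefficient times $\nabla_\eta\rho$ plus a bounded coefficient times $\rho/|\eta|$. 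Substituting this wherever $is\nabla_\xi\rho$ (or its square) occurs turns the dangerous $s\eta$ factor into two pieces, one killed by an $\eta$-integration by parts and one killed by an $s$-integration by parts, simultaneously, with no cutoffs and no pointwise lower bounds to verify; the leftover $1/|\eta|$ goes onto the profile $h$ and is controlled by Hardy's inequality (precisely \eqref{8.6}--\eqref{8.8}). The same identity-based trick is what \cite{ZFJ} does for $\phi$, so your assertion that ``the only genuinely new point is to recover the quantitative lower bounds [\cite{ZFJ}] uses'' misreads the source --- \cite{ZFJ} also uses an algebraic identity, and the only genuinely new point here is checking that the $\beta$-dependent coefficient in \eqref{8.1} stays bounded after the high-frequency cutoff.

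Your localize-and-lower-bound substitute does not close near $\mathcal{R}_\rho$. There $|\nabla_\eta\rho|\sim|\eta|$ and $|\rho|\sim|\eta|^2$, so dividing $s\eta$ by $|\nabla_\eta\rho|^2$ yields $s/|\eta|$, and integrating by parts in time against $\rho$ also yields $s/|\eta|$: both are \emph{worse}, not better, as $\eta\to0$, while identity \eqref{8.1} succeeds precisely because each of its two pieces inherits the vanishing of $\eta$ itself. Your intermediate region (near $\mathcal{S}_\rho$ but away from $\mathcal{T}_\rho$) relies on $|\rho|\gtrsim1$, which holds only at a fixed distance from $\mathcal{T}_\rho$, leaving a transition strip unaccounted for; gluing the three regions would require a second, unspecified layer of cutoffs and error terms from differentiating them. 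Finally, for the hardest contribution to $\|x^2F_2\|_{L^2}$ the split needed is of $h$, not $f$: the paper writes $h=h_0+h_1+h_2$ via Lemma \ref{lem6.2} and then cuts $B_2$ in $\eta$-frequency at $|\eta|\sim s^{-l}$ into $B_2^{\rm low}$ and $B_2^{\rm high}$, feeding in $t^{-3/4}\|x^2h_1\|_{L^2}$, $\|P_{\le k}h_2\|_{L^2}\lesssim 2^{(11/4-3\delta)k}$, and the $X$-norm control of $\|\wedge x^2h\|_{L^2}$; your sketch gestures at the right lemma but does not connect these specific inputs to the specific terms that need them.
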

\noindent {\bf Proof of Proposition \ref{prop8.1}.}\\
To prove this Proposition \ref{prop8.1}, we first prove the following lemma.
\begin{lemma}\label{lem8.1}
Let space X be defined in \eqref{4.21}, we have
 \begin{align*}	
t^{-\delta}\|xF_{1}\|_{L^{2}}+t^{-1+2\alpha+\delta}\|x^{2}F_{1}\|_{L^{2}}\lesssim \|(\mathcal{E},N,M)\|_{X}^{2}.
\end{align*}	
\end{lemma}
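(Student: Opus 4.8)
The plan is to bound the two weighted norms of $F_1$ defined in \eqref{4.7}, namely $\|xF_1\|_{L^2}$ and $\|x^2F_1\|_{L^2}$, by passing to the Fourier side, where $xF_1$ corresponds to $\nabla_\xi\widehat{F_1}$ and $x^2F_1$ to $\nabla_\xi^2\widehat{F_1}$. Applying $\nabla_\xi$ to $\widehat{F_1}(t,\xi)=\frac{1}{2i}\int_0^t\int_{\mathbb{R}^3}e^{is\phi_{\pm}(\xi,\eta)}\widehat f(\xi-\eta,s)\widehat g_{\pm}(\eta,s)\,d\eta\,ds$ produces two kinds of terms: one where the derivative hits $\widehat f(\xi-\eta,s)$, giving a factor $\nabla_\xi\widehat f=\widehat{e^{-is\Delta}(xf)}$ (harmless, controlled by the weighted norm of $f$ in $X$), and one where it hits the phase, producing the dangerous factor $is\nabla_\xi\phi_{\pm}(\xi,\eta)=2is\eta$. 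For $\nabla_\xi^2$ one similarly gets terms with $\nabla_\xi^2\widehat f$, mixed terms, and the worst term carrying $(is\nabla_\xi\phi_{\pm})^2\sim s^2|\eta|^2$. This is exactly the structure treated in \cite{ZFJ} for the Zakharov case, and since $\phi_{\pm}(\xi,\eta)=2\xi\eta-|\eta|^2\pm|\alpha\eta|$ is identical to the Zakharov phase, the estimate for $F_1$ is literally the same computation.

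First I would record the crucial identity exploiting the space–time resonance structure: on the relevant region one has $\nabla_\xi\phi_{\pm}(\xi,\eta)=2\eta$, so that whenever a factor of $s\eta$ (or $(s\eta)^2$) appears, one can trade it against $\nabla_\eta\phi_{\pm}=2\xi-2|\eta|\pm\alpha\frac{\eta}{|\eta|}$ — but more usefully, write $s\eta\, e^{is\phi}=\frac{1}{i}\nabla_?\,e^{is\phi}+\dots$ combined with integration by parts in $\eta$, which moves derivatives onto $\widehat f$ and $\widehat g_{\pm}$ and lowers the power of $s$. After the integration by parts, the terms take the form of Coifman–Meyer or $O[\cdot,\cdot]$ bilinear operators with symbols that, on the frequency-truncated region $|\xi-\eta|,|\eta|\lesssim s^{\delta_N}$ established in the High frequency cutoff paragraph, are bounded in $L^\infty_\eta H^1_\xi$ (or $H^{3/2+\epsilon}$) by powers $s^{C\delta_N}$. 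Then I would apply Lemma~\ref{lem2.4} (or Lemma~\ref{lem2.5}) together with the linear dispersive estimates \eqref{2.1}–\eqref{2.2} to the Schrödinger factors and the bounds $\|g_{\pm}\|_{H^N}\lesssim 1$, $\|xg_{\pm}\|_{H^1}\lesssim 1$ built into $X$, and integrate the resulting $s$-power in time.

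The key bookkeeping is the time integrability: each dangerous term, after integration by parts in $\eta$, produces an integrand roughly of the shape $s^{C\delta_N}\cdot(\text{weighted }L^2\text{ norm of }f)\cdot\|e^{is\Delta}f\|_{L^\infty}$, where the $L^\infty$ factor decays like $s^{-1-\alpha}$ by \eqref{2.1}. With $\|xf\|_{L^2}\lesssim s^\delta$ and $\|x^2f\|_{L^2}\lesssim s^{1-2\alpha-\delta}$ from $X$, one checks that the $s$-integral for $xF_1$ grows at most like $t^\delta$ and that for $x^2F_1$ at most like $t^{1-2\alpha-\delta}$, provided the parameter constraints $C\delta_N\le\delta$ and $\alpha\ge\delta+C\delta_N$ hold — these are consistent with \eqref{1.9} for $N$ large. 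The main obstacle — and the reason this is isolated as a lemma citing \cite{ZFJ} — is the region near the space–time resonance $\mathcal{R}_{\phi_{\pm}}=\{\eta=0,\ |\xi|=\alpha/2\}$, where $\nabla_\eta\phi_{\pm}$ degenerates and the single integration by parts in $\eta$ is not enough; there one must either localize dyadically in $|\eta|$ and in the distance $\big||\xi|-\alpha/2\big|$, balancing the loss from $1/\nabla_\eta\phi_{\pm}$ against the smallness of the $\eta$-measure, or use the $g_{\pm}$-equation \eqref{4.5} to rewrite $\partial_s\widehat g_{\pm}$ and integrate by parts in $s$ instead. Since $\phi_{\pm}$ coincides exactly with the Zakharov phase, this analysis is carried out verbatim in \cite{ZFJ}, Section~4, and I would simply invoke it.
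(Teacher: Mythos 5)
Your proposal is correct and follows essentially the same route as the paper: since the phase $\phi_\pm$ for the Schr\"odinger--wave interaction in $F_1$ is identical to the Zakharov phase, the paper's proof of Lemma~\ref{lem8.1} simply defers to \cite{ZFJ}, and your sketch of the $\nabla_\xi$/$\nabla_\xi^2$ expansion, the integration by parts in $\eta$ and $s$ via the decomposition of $\nabla_\xi\phi_\pm=2\eta$ in terms of $\nabla_\eta\phi_\pm$ and $\phi_\pm/|\eta|$, the Coifman--Meyer/$O[\cdot,\cdot]$ bounds on the high-frequency-truncated region, and the dispersive-decay bookkeeping is a faithful account of that argument.
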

The detailed process of the proof can be found in \cite{ZFJ}.\\

Next, we need to handle the weighted estimates of $F_{2}$. A key identity that we are going to use is
\begin{align}	&\nabla_{\xi}\rho=2\eta=-2\frac{2\sqrt{\eta^{2}+1}}{\beta\eta+\sqrt{\eta^{2}+1}}\nabla_{\eta}\rho+\frac{2\sqrt{\eta^{2}+1}}{\beta\eta+\sqrt{\eta^{2}+1}}\frac{\rho}{|\eta|}.\label{8.1}
\end{align}
\noindent {\bf For the estimate of} $\|xF_{2}\|_{L^{2}}\lesssim t^{\delta}\|(\mathcal{E},N,M)\|_{X}^{2}$. \\Applying $\nabla_{\xi}$ to $\widehat F_{2}$ gives the terms:
\begin{align}
	\nabla_{\xi}\widehat F_{2}&=\int_{0}^{t} \int_{R^{3}} e^{is\rho(\xi,\eta)}\nabla_{\xi}\widehat f(\xi-\eta,s)\widehat  h(\eta,s)d\eta ds,\label{8.2}\\
	&+\int_{0}^{t}\int_{R^{3}}is\nabla_{\xi}\rho(\xi,\eta) e^{is\rho(\xi,\eta)}\widehat f(\xi-\eta,s)\widehat  h(\eta,s)d\eta ds.\label{8.3}	
\end{align}
For the estimate of \eqref{8.2}, from  H$\ddot{o}$lder{'}s inequality and \eqref{7.23}, we have
\begin{align*}
	\|\eqref{8.2}\|_{L^{2}}
	&\lesssim\int_{0}^{t}\|xf\|_{L^{2}}\big\| e^{\mp is\beta(\Delta^{2}-\Delta)^{1/2}}h\big\|_{L^{\infty}}ds\\
 &\lesssim\int_{0}^{t}s^{\delta}\frac{1}{s}ds\|(\mathcal{E},N,M)\|_{X}^{2}\\
&\lesssim t^{\delta}\|(\mathcal{E},N,M)\|_{X}^{2}.
\end{align*}
Using \eqref{8.1} to integrate by parts for $\eta$ and $s$ in  \eqref{8.3},  we get the following  terms:
\begin{align}	
	&\int_{0}^{t}\int_{R^{3}}\frac{4\sqrt{\eta^{2}+1}}{\beta\eta+\sqrt{\eta^{2}+1}}e^{is\rho(\xi,\eta)}\nabla_{\eta}\widehat f(\xi-\eta,s)\widehat  h(\eta,s)d\eta ds,\label{8.4}\\
	&\int_{0}^{t}\int_{R^{3}} \frac{4\sqrt{\eta^{2}+1}}{\beta\eta+\sqrt{\eta^{2}+1}}e^{is\rho(\xi,\eta)}\widehat f(\xi-\eta,s)\nabla_{\eta}\widehat  h(\eta,s)d\eta ds\label{8.5},\\	
	&\int_{R^{3}}t\frac{2\sqrt{\eta^{2}+1}}{\beta\eta+\sqrt{\eta^{2}+1}}\frac{1}{|\eta|}e^{it\rho(\xi,\eta)}\widehat f(\xi-\eta,t)\widehat  h(\eta,t) d\eta,\label{8.6}\\
	&\int_{0}^{t}\int_{R^{3}}s\frac{2\sqrt{\eta^{2}+1}}{\beta\eta+\sqrt{\eta^{2}+1}}\frac{1}{|\eta|}e^{is\rho(\xi,\eta)}\partial_{s}\widehat f(\xi-\eta,s)\widehat  h(\eta,s)d\eta ds,\label{8.7}\\	
	&\int_{0}^{t}\int_{R^{3}}s\frac{2\sqrt{\eta^{2}+1}}{\beta\eta+\sqrt{\eta^{2}+1}}\frac{1}{|\eta|}e^{is\rho(\xi,\eta)}\widehat f(\xi-\eta,s)\partial_{s}\widehat  h(\eta,s)d\eta ds.\label{8.8}	
\end{align}
The estimate of \eqref{8.4} is almost identical to \eqref{8.2}, it can be derived in an identical fashion,  so we directly get
 \begin{align*}
	\|\eqref{8.4}\|_{L^{2}}
	\lesssim  t^{\delta}\|(\mathcal{E},N,M)\|_{X}^{2}.
\end{align*}
For the estimate of  \eqref{8.5},
recall that
\begin{align}
	h(t,x)=h(0,x)+H(t,x)=B_{0}(x)+i\beta^{-1}(\Delta^{2}-\Delta)^{-1/2}B_{1}(x)	+H(t,x),
\end{align}
and therefore
\begin{align}
	xh(t,x)=xB_{0}(x)+i\beta^{-1}(\Delta^{2}-\Delta)^{-\frac{1}{2}}xB_{1}(x)	-i\beta^{-1}\frac{1-2\Delta}{\Delta(1-\Delta)^\frac{3}{2}}B_{1}(x)+xH(t,x).
\end{align}
Finally,  from Sobolev{'}s embedding theorem, H$\ddot{o}$lder{'}s inequality and initial datas \eqref{1.7}, we have
\begin{align*}
	\|\eqref{8.5}\|_{L^{2}}
	&\lesssim \int_{0}^{t}
	\big\|e^{is \Delta}f\big\|_{L^{\infty}}\big\| e^{\mp is\beta(\Delta^{2}-\Delta)^{1/2}}xh\big\|_{L^{2}}ds\nonumber\\
				&\lesssim\int_{0}^{t}\big\|e^{is \Delta}f\big\|_{L^{\infty}}\Big(\| xB_{0}(x)\|_{L^{2}}+\| xB_{1}(x)	\|_{H^{1}}+\| \beta^{-1} B_{1}(x)\|_{H^{1}}+\| xH(t,x)\|_{L^{2}}\Big)ds\\
				&\lesssim \int_{0}^{t}\frac{1}{{\left\langle s \right\rangle}^{1+\alpha}}ds\|(\mathcal{E},N,M)\|_{X}^{2}\\
				&\lesssim t^{\delta}\|(\mathcal{E},N,M)\|_{X}^{2}.
\end{align*}
For the estimate of \eqref{8.6}, by   H$\ddot{o}$lder{'}s inequality and  Hardy{'}s inequality, we  get
\begin{align*}
	\|\eqref{8.6}\|_{L^{2}}
	\lesssim t\big\|e^{it \Delta}f\big\|_{L^{\infty}}\big\| e^{\mp it\beta(\Delta^{2}-\Delta)^{1/2}}\wedge^{-1}h\big\|_{L^{2}}	
	\lesssim \|(\mathcal{E},N,M)\|_{X}^{2}.
\end{align*}
For the estimates of   \eqref{8.7} and \eqref{8.8}, by  $ \beta(-\Delta)^{1/2}(I-\Delta)^{-1/2}\nabla\times(\nabla\times(\mathcal{E}\times
\overline {\mathcal{E}}))=e^{i\beta(\Delta^{2}-\Delta)^{1/2}t}\partial_{s}h$, $\mathcal{E}N+\mathcal{E}M= e^{it \Delta}\partial_{s}f$ and Sobolev{'}s embedding theorem, we have
\begin{align*}
	&\|\eqref{8.7}\|_{L^{2}}+	\|\eqref{8.8}\|_{L^{2}}\\
		&\lesssim \int_{0}^{t}s\big\|e^{is \Delta}\partial_{s}f\big\|_{L^{\infty}}\big\| e^{\mp i\beta(\Delta^{2}-\Delta)^{1/2}s}\wedge^{-1}h\big\|_{L^{2}}+s\big\|e^{is \Delta}f\big\|_{L^{\infty}}\big\| e^{\mp is\beta(\Delta^{2}-\Delta)^{1/2}}\wedge^{-1}\partial_{s}h\big\|_{L^{2}}ds\\
	&\lesssim \int_{0}^{t}s\|\mathcal{E}\|_{L^{\infty}}\|N\|_{L^{\infty}}+s\|\mathcal{E}\|_{L^{\infty}}\|M\|_{L^{\infty}}+\beta s\|\mathcal{E}\|_{L^{\infty}}\| \mathcal{E}\|_{H^{N+1}}\|\mathcal{E}\|_{L^{\infty}}ds\\
	&\lesssim t^{\delta}\|(\mathcal{E},N,M)\|_{X}^{3}.
\end{align*}
\noindent {\bf For the estimate of} $\|x^{2}F_{2}\|_{L^{2}}\lesssim t^{1-2\alpha-\delta}\|(\mathcal{E},N,M)\|_{X}^{2}$. \\Applying $\nabla_{\xi}^{2}$ to $\widehat F_{2}$ gives the terms:
\begin{align}
	\nabla_{\xi}^{2}\widehat F_{2}&=\int_{0}^{t} \int_{R^{3}} e^{is\rho(\xi,\eta)}\nabla_{\xi}^{2}\widehat f(\xi-\eta,s)\widehat  h(\eta,s)d\eta ds,\label{8.11}\\
	&+\int_{0}^{t}\int_{R^{3}}is\nabla_{\xi}\rho(\xi,\eta) e^{is\rho(\xi,\eta)}\nabla_{\xi}\widehat f(\xi-\eta,s)\widehat  h(\eta,s)d\eta ds,\label{8.12}\\
	&+\int_{0}^{t}\int_{R^{3}}(is\nabla_{\xi}\rho(\xi,\eta) )^{2}e^{is\rho(\xi,\eta)}\widehat f(\xi-\eta,s)\widehat  h(\eta,s)d\eta ds.\label{8.13}		
\end{align}
\noindent {\bf For the estimate of  \eqref{8.11}.}  From an $L^2 \times L^\infty$ application of H$\ddot{o}$lder{'}s inequality and \eqref{7.23}, we  get
\begin{align*}
	\|\eqref{8.11}\|_{L^{2}}
	&\lesssim\int_{0}^{t}\|x^2f\|_{L^{2}}\big\| e^{\pm is\beta(\Delta^{2}-\Delta)^{1/2}}h\big\|_{L^{\infty}}ds\\
&\lesssim \int_{0}^{t}s^{1-2\alpha-\delta}\frac{1}{s}ds\|(\mathcal{E},N,M)\|_{X}^{2}\\
	&\lesssim t^{1-2\alpha-\delta}\|(\mathcal{E},N,M)\|_{X}^{2}.
\end{align*}
\noindent {\bf For the estimate of  \eqref{8.12}.} We use \eqref{8.1} to integrate by parts for $\eta$ and $s$ in \eqref{8.12}, we have
\begin{align}
		&\int_{0}^{t}\int_{R^{3}} \frac{4\sqrt{\eta^{2}+1}}{\beta\eta+\sqrt{\eta^{2}+1}}e^{is\rho(\xi,\eta)}\nabla_{\eta}\nabla_{\xi}\widehat f(\xi-\eta,s)\widehat  h(\eta,s)d\eta ds,\label{8.14}\\
	&\int_{0}^{t}\int_{R^{3}} \frac{4\sqrt{\eta^{2}+1}}{\beta\eta+\sqrt{\eta^{2}+1}}e^{is\rho(\xi,\eta)}\nabla_{\xi}\widehat f(\xi-\eta,s)\nabla_{\eta}\widehat  h(\eta,s)d\eta ds,\label{8.15}\\
	&\int_{R^{3}}t\frac{2\sqrt{\eta^{2}+1}}{\beta\eta+\sqrt{\eta^{2}+1}}\frac{1}{|\eta|}e^{it\rho(\xi,\eta)}\nabla_{\xi}\widehat f(\xi-\eta,t)\widehat  h(\eta,t)d\eta, \label{8.16}\\	
	&\int_{0}^{t}\int_{R^{3}}s\frac{2\sqrt{\eta^{2}+1}}{\beta\eta+\sqrt{\eta^{2}+1}}\frac{1}{|\eta|}e^{is\rho(\xi,\eta)}\nabla_{\xi}\widehat f(\xi-\eta,s)\partial_{s}\widehat  h(\eta,s)d\eta ds,\label{8.17}\\
	&\int_{0}^{t}\int_{R^{3}}s\frac{2\sqrt{\eta^{2}+1}}{\beta\eta+\sqrt{\eta^{2}+1}}\frac{1}{|\eta|}e^{is\rho(\xi,\eta)}\partial_{s}\nabla_{\xi}\widehat f(\xi-\eta,s)\widehat  h(\eta,s)d\eta ds.\label{8.18}	
\end{align}
For the estimate of \eqref{8.14}, it can be derived in a straightforward fashion which is identical to  \eqref{8.11}, so we directly get
 \begin{align*}
	\|\eqref{8.14}\|_{L^{2}}
	\lesssim  t^{1-2\alpha-\delta}\|(\mathcal{E},N,M)\|_{X}^{2}.
\end{align*}
Applying  Hardy{'}s inequality, Sobolev{'}s embedding theorem and initial datas \eqref{1.7} in \eqref{8.15}, we have
\begin{align*}
		\|\eqref{8.15}\|_{L^{2}}
	&\lesssim \int_{0}^{t}\big\|e^{is \Delta}xf\big\|_{L^{6}}\big\| e^{\mp is\beta(\Delta^{2}-\Delta)^{1/2}}xh\big\|_{L^{3}}ds\\
	&\lesssim \int_{0}^{t}\frac{1}{s}\|x^{2}f\|_{L^{2}}\Big(\big\| xB_{0}(x)+xH(t,x)\big\|_{L^{3}}+\| xB_{1}(x)\|_{H^{1}}+\|  B_{1}(x)\|_{H^{N-3}}\Big)ds\\	
&\lesssim t^{1-2\alpha-\delta}\|(\mathcal{E},N,M)\|_{X}^{2}.
\end{align*}
The term \eqref{8.16} can be treated similarly to \eqref{8.15}, since $\widehat h/|\eta| $ plays the same role as $\nabla_{\eta}\widehat h$, and the factor of $t$ plays the same role of the integral in time, so we have
 \begin{align*}
	\|\eqref{8.16}\|_{L^{2}}
	\lesssim  t^{1-2\alpha-\delta}\|(\mathcal{E},N,M)\|_{X}^{2}.
\end{align*}
For the estimate of  \eqref{8.17}, from the fact that
$\|\wedge^{-1}e^{i\beta(\Delta^{2}-\Delta)^{1/2}s}\partial_{s}h\|_{L^{3}}\lesssim \beta s^{\delta}\frac{1}{s^{1+\alpha}}$, we have
\begin{align*}
	\|\eqref{8.17}\|_{L^{2}}
	\lesssim \int_{0}^{t}s\big\|e^{is \Delta}xf\big\|_{L^{6}}\big\| e^{\mp is\beta (\Delta^{2}-\Delta)^{1/2}}\wedge^{-1}\partial_{s}h\big\|_{L^{3}}ds
	\lesssim  t^{1-2\alpha-\delta}\|(\mathcal{E},N,M)\|_{X}^{2},
\end{align*}
where $\delta \le\alpha$.  For the estimate of   \eqref{8.18}, first, we consider, because
\begin{align*}
	e^{is \Delta}\partial_{s}xf&=\mathcal{F}^{-1}\bigg(\int_{R^{3}}s\nabla_{\xi}\phi(\xi,\eta)\widehat {\mathcal{E}}(\xi-\eta,s)\widehat  N(\eta,s) d\eta\bigg) +e^{is \Delta}xfN,\\
	&+\mathcal{F}^{-1}\bigg(\int_{R^{3}}s\nabla_{\xi}\rho(\xi,\eta)\widehat {\mathcal{E}}(\xi-\eta,s)\widehat  M(\eta,s) d\eta\bigg)+e^{is \Delta}xfM,
\end{align*}
so, we have
$\|e^{is \Delta}\partial_{s}xf\|_{L^{6}}\lesssim \frac{1}{s}s^{\delta}s^{\delta_{N}}.	$
Next, follows from  initial datas \eqref{1.7}, Hardy--Littlewood--Sobolev \eqref{lem2.3}  and   the linear dispersive estimate \eqref{2.5}, we have
\begin{align*}
	&\big\|	e^{is\beta(\Delta^{2}-\Delta)^{1/2}}\wedge^{-1}B_{0}(x)\big\|_{L^{3}}\lesssim\frac{1}{s^{\frac{1}{3}}}\big\|\wedge^{-\frac{2}{3}}B_{0}(x)\big\|_{L^{3/2}}
	 \lesssim\frac{1}{s^{\frac{1}{3}}}\|xB_{0}(x)\|_{L^{2}}\lesssim\frac{1}{s^{\frac{1}{3}}},\\
	&\big\|	e^{i\beta(\Delta^{2}-\Delta)^{1/2}s}\wedge^{-1}i\beta^{-1}(\Delta^{2}-\Delta)^{-1/2}B_{1}(x)\big\|_{L^{3}}
\lesssim\frac{1}{s^{\frac{1}{3}}}\big\|\wedge^{-\frac{5}{3}}\beta^{-1}B_{1}(x)\big\|_{L^{1}}\lesssim\frac{1}{s^{\frac{1}{3}}}.
\end{align*}
Moreover,  still with the help of  Product law and   the linear dispersive estimate \eqref{2.5}, we see that
\begin{align*}
	\big\|	e^{is\beta(\Delta^{2}-\Delta)^{1/2}}\wedge^{-1}H(t,x)\big\|_{L^{3}}
	&\lesssim \int_{0}^{t}\frac{1}{(s-r)^{\frac{1}{3}}}\big\|	|\wedge|^{\frac{4}{3}}(\mathcal{E}\times
	\overline {\mathcal{E}})\big\|_{L^{3/2}}dr\\
		&\lesssim \int_{0}^{t}\frac{1}{(s-r)^{\frac{1}{3}}}\|	\mathcal{E}\|_{L^{3}}\|	\mathcal{E}\|_{W_{2,3}}dr\\
				&\lesssim \int_{0}^{t}\frac{1}{(s-r)^{\frac{1}{3}}}s^{2\delta_{N}}\frac{1}{\sqrt r}r^{\frac{\delta}{2}}\frac{1}{\sqrt r}r^{\frac{\delta}{2}}dr\\
					&\lesssim	\frac{1}{s^{\frac{1}{3}}}s^{2\delta_{N}}	s^{\delta}.
\end{align*}
Finally, collect the estimates above, we have
\begin{align*}
	\|	\eqref{8.18}\|_{L^{2}}
	\lesssim \int_{0}^{t}s\big\|e^{is \Delta}\partial_{s}xf\big\|_{L^{6}}\big\| e^{\mp is\beta (\Delta^{2}-\Delta)^{1/2}}\wedge^{-1}h\big\|_{L^{3}}ds
	\lesssim t^{1-2\alpha-\delta}\|(\mathcal{E},N,M)\|_{X}^{2},
\end{align*}
where $3\delta_{N}\le \delta$.\\
\noindent {\bf For the estimate of  \eqref{8.13}.}
We split
$B( f, h)$ into  $B_{0}$,  $B_{1}$ and $ B_{2}$ with
\begin{align}
	&B_{0}(f,h)(t,\xi):=B(f,h_0)=\int_{0}^{t}\int_{R^{3}}(is\nabla_{\xi}\rho(\xi,\eta) )^{2}e^{is\rho(\xi,\eta)}\widehat f(\xi-\eta,s)\widehat  h_0(\eta,s)d\eta ds,	\label{8.19}\\
	&B_{1}(f,h)(t,\xi):=B(f,h_1)=\int_{0}^{t}\int_{R^{3}}(is\nabla_{\xi}\rho(\xi,\eta) )^{2}e^{is\rho(\xi,\eta)}\widehat f(\xi-\eta,s)\widehat  h_1(\eta,s)d\eta ds,	\label{8.20}\\
	&B_{2}(f,h)(t,\xi):=B(f,h_2)=\int_{0}^{t}\int_{R^{3}}(is\nabla_{\xi}\rho(\xi,\eta) )^{2}e^{is\rho(\xi,\eta)}\widehat f(\xi-\eta,s)\widehat  h_2(\eta,s)d\eta ds.	\label{8.21}
\end{align}
We then split $h$ as $h = h_0 + h_1 + h_2$, where $h_0$ is the
initial data and $h_1$ and $h_2$ are as in \eqref{6.22} and \eqref{6.23} respectively.\\
\noindent {\bf For the estimate  of $B_{0}$ in $L^{2}$.}

 From \eqref{8.1}, we have $\nabla_{\xi}\rho(\xi,\eta)=|\eta| $ and $\eta^{2}\sim \frac{2\sqrt{\eta^{2}+1}}{\beta\eta+\sqrt{\eta^{2}+1}}\rho -\frac{4\sqrt{\eta^{2}+1}}{\beta\eta+\sqrt{\eta^{2}+1}}\eta \nabla_{\eta}\rho$, then using this idenity to integrate by parts for $\eta$ and $s$ in  \eqref{8.19},
\begin{align}	
	&\int_{0}^{t}\int_{R^{3}}s\frac{4\sqrt{\eta^{2}+1}}{\beta\eta+\sqrt{\eta^{2}+1}}
	\eta e^{is\rho(\xi,\eta)}\nabla_{\eta}\widehat f(\xi-\eta,s)\widehat   h_{0}(\eta,s)d\eta ds,\label{8.22}\\
	&\int_{0}^{t}\int_{R^{3}}s\frac{4\sqrt{\eta^{2}+1}}{\beta\eta+\sqrt{\eta^{2}+1}}\eta e^{is\rho(\xi,\eta)}\widehat f(\xi-\eta,s)\nabla_{\eta}\widehat  h_{0}(\eta,s)d\eta ds,\label{8.23}	\\	
	&\int_{R^{3}}t^{2}\frac{2\sqrt{\eta^{2}+1}}{\beta\eta+\sqrt{\eta^{2}+1}}e^{it\rho(\xi,\eta)}\widehat f(\xi-\eta,t)\widehat  h_{0}(\eta,t)d\eta,\label{8.24}\\	
	&\int_{0}^{t}\int_{R^{3}}s^{2}\frac{2\sqrt{\eta^{2}+1}}{\beta\eta+\sqrt{\eta^{2}+1}}e^{is\rho(\xi,\eta)}\partial_{s}\widehat f(\xi-\eta,s)\widehat   h_{0}(\eta,s)d\eta ds.\label{8.25}	
\end{align}
 \eqref{8.22} is similar to \eqref{8.12}, so we have
  \begin{align*}
	\|\eqref{8.22}\|_{L^{2}}
	\lesssim  t^{1-2\alpha-\delta}\|(\mathcal{E},N,M)\|_{X}^{2}.
\end{align*}
For the estimate of \eqref{8.23}, first, we notice that
\begin{align*}
\eta\nabla_{\eta}\widehat  h_{0}=\eta\nabla_{\eta}\widehat  B_{0}+i\beta^{-1}\frac{\nabla_{\eta}\widehat  B_{1}}{(1+\eta^{2})^{1/2}}-i\beta^{-1}\frac{(2\eta^{2}+1)\widehat  B_{1}}{|\eta|(1+\eta^{2})^{3/2}}.
\end{align*}
 Then, from the linear dispersive estimate \eqref{2.5}, we have
\begin{align*}
\big\|e^{is\beta(\Delta^{2}-\Delta)^{1/2}}	\mathcal{F}^{-1}\eta\nabla_{\eta}\widehat  H_{0}\big\|_{L^{3}}\lesssim\frac{1}{s^{\frac{1}{3}}}s^{\frac{4}{3}\delta_{N}}.
\end{align*}
 Final,  collect the estimates above, we obtain
\begin{align*}
	\|\eqref{8.23}\|_{L^{2}}	
	\lesssim \int_{0}^{t}s\|e^{is \Delta}f\|_{L^{6}}\big\| e^{\mp is\beta (\Delta^{2}-\Delta)^{1/2}}\mathcal{F}^{-1}(\eta\nabla_{\eta}\widehat  h_{0})\big\|_{L^{3}}ds
	\lesssim  t^{1-2\alpha-\delta}\|(\mathcal{E},N,M)\|_{X}^{2},
\end{align*}
where $\delta_{N}\le\frac{3}{2}\delta$.  Similarly, from H$\ddot{o}$lder{'}s inequality and the linear dispersive estimate \eqref{2.5}, we get
\begin{align*}
	\|\eqref{8.24}\|_{L^{2}}	
	&\lesssim t^{2}\big\|e^{is \Delta}f\big\|_{L^{6}}\big\| e^{\mp is\beta (\Delta^{2}-\Delta)^{1/2}}  h_{0}\big\|_{L^{3}}ds
	\lesssim t^{1-2\alpha-\delta}\|(\mathcal{E},N,M)\|_{X}^{2},
\end{align*}
where
$	\big\|	e^{it\beta(\Delta^{2}-\Delta)^{1/2}} h_{0}\big\|_{L^{3}}\lesssim\frac{1}{s^{\frac{1}{3}}}s^{\frac{1}{3}\delta_{N}}$.
Finally,  from the fact that  $\mathcal{E}N+\mathcal{E}M= e^{it \Delta}\partial_{s}f$, we have
\begin{align*}
	\|	\eqref{8.25}\|_{L^{2}}
	\lesssim \int_{0}^{t}s^2\big\|e^{is \Delta}\partial_{s}f\big\|_{L^{6}}\big\| e^{\mp is\beta (\Delta^{2}-\Delta)^{1/2}}  h_{0}\big\|_{L^{3}}ds
	\lesssim t^{1-2\alpha-\delta}\|(\mathcal{E},N,M)\|_{X}^{2}.
\end{align*}
\noindent {\bf For the estimate of $B_{1}$ \eqref{8.20}.} Using \eqref{8.1} to integrate by parts for $\eta$ and s in  \eqref{8.20},  we have
\begin{align}
	&\int_{0}^{t}\int_{R^{3}} s\frac{4\sqrt{\eta^{2}+1}}{\beta\eta+\sqrt{\eta^{2}+1}}\eta e^{is\rho(\xi,\eta)}\nabla_{\eta}\widehat f(\xi-\eta,s)\widehat  h_{1}(\eta,s)d\eta ds,\label{8.26}\\
	&\int_{0}^{t}\int_{R^{3}} s\frac{4\sqrt{\eta^{2}+1}}{\beta\eta+\sqrt{\eta^{2}+1}}\eta e^{is\rho(\xi,\eta)}\widehat f(\xi-\eta,s)\nabla_{\eta}\widehat  h_{1}(\eta,s)d\eta ds,\label{8.27}\\
	&\int_{R^{3}}t^{2} \frac{2\sqrt{\eta^{2}+1}}{\beta\eta+\sqrt{\eta^{2}+1}}e^{it\rho(\xi,\eta)}\widehat f(\xi-\eta,t)\widehat  h_{1}(\eta,t)d\eta,\label{8.28}\\	
	&\int_{0}^{t}\int_{R^{3}}s^{2}\frac{2\sqrt{\eta^{2}+1}}{\beta\eta+\sqrt{\eta^{2}+1}} e^{is\rho(\xi,\eta)}\partial_{s}\widehat f(\xi-\eta,s)\widehat  h_{1}(\eta,s)d\eta ds,\label{8.29}\\	
	&\int_{0}^{t}\int_{R^{3}}s^{2}\frac{2\sqrt{\eta^{2}+1}}{\beta\eta+\sqrt{\eta^{2}+1}} e^{is\rho(\xi,\eta)}\widehat f(\xi-\eta,s)\partial_{s}\widehat h_{1}(\eta,s)d\eta ds.	\label{8.30}
\end{align}
 \eqref{8.26}, \eqref{8.28} and \eqref{8.29} are respectively similar to  \eqref{8.12}, \eqref{8.24} and \eqref{8.25}, and can be handled in a similar fashion, so we have
 \begin{align*}
	\|\eqref{8.26}\|_{L^{2}}+\|\eqref{8.28}\|_{L^{2}}+\|\eqref{8.29}\|_{L^{2}}
	\lesssim  t^{1-2\alpha-\delta}\|(\mathcal{E},N,M)\|_{X}^{2}.
\end{align*}
For the estimate of \eqref{8.27}, we use \eqref{8.1} to integrate   by parts for $\eta$ and $s$ in  \eqref{8.27},
\begin{align}
	&\int_{0}^{t}\int_{R^{3}}(\frac{4\sqrt{\eta^{2}+1}}{\beta\eta+\sqrt{\eta^{2}+1}})^2 e^{is\rho(\xi,\eta)}\nabla_{\eta}\widehat f(\xi-\eta,s)\nabla_{\eta}\widehat  h_{1}(\eta,s)d\eta ds,\label{8.31}\\
	&\int_{0}^{t}\int_{R^{3}}(\frac{4\sqrt{\eta^{2}+1}}{\beta\eta+\sqrt{\eta^{2}+1}})^2 e^{is\rho(\xi,\eta)}\widehat f(\xi-\eta,s)\nabla_{\eta}^2\widehat  h_{1}(\eta,s)d\eta ds,\label{8.32}\\
	&\int_{R^{3}}6t(\frac{\sqrt{\eta^{2}+1}}{\beta\eta+\sqrt{\eta^{2}+1}})^2\frac{1}{|\eta|}e^{it\rho(\xi,\eta)}\widehat f(\xi-\eta,t)\nabla_{\eta}\widehat  h_{1}(\eta,s)d\eta, \label{8.33}\\	
	&\int_{0}^{t}\int_{R^{3}}6s(\frac{\sqrt{\eta^{2}+1}}{\beta\eta+\sqrt{\eta^{2}+1}})^2\frac{1}{|\eta|}e^{is\rho(\xi,\eta)}\widehat f(\xi-\eta,s)\partial_{s}\nabla_{\eta}\widehat  h_{1}(\eta,s)d\eta ds,\label{8.34}\\
	&\int_{0}^{t}\int_{R^{3}}6s(\frac{\sqrt{\eta^{2}+1}}{\beta\eta+\sqrt{\eta^{2}+1}})^2\frac{1}{|\eta|}e^{is\rho(\xi,\eta)}\partial_{s}\widehat f(\xi-\eta,s)\nabla_{\eta}\widehat  h_{1}(\eta,s)d\eta ds.\label{8.35}	
\end{align}
The terms obtained by doing so are of the
type \eqref{8.14}-\eqref{8.18} (or easier), except  the following  terms:  \eqref{8.32}, \eqref{8.34} and \eqref{8.35}.
For the estimate of  \eqref{8.32}, from \eqref{2.1} and Lemma \ref{lem6.2}, we have
\begin{align*}
	\|\eqref{8.32}	\|_{L^{2}}
	&\lesssim \int_{0}^{t}\big\|e^{is \Delta}f\big\|_{L^{\infty}}\big\| e^{\mp is\beta (\Delta^{2}-\Delta)^{1/2}}  x^2h_{1}\big\|_{L^{2}}ds
	\lesssim t^{1-2\alpha-\delta}\|(\mathcal{E},N,M)\|_{X}^{2},
\end{align*}
where $\alpha+3\delta\le \frac{1}{12}$.
For the estimate of \eqref{8.34}, first, due to
\begin{align*}
e^{\pm is\beta (\Delta^{2}-\Delta)^{1/2}}\partial_{s}xh_{1}&=\mathcal{F}^{-1}\bigg(\int_{R^{3}}|\xi|(1+|\xi|^{2})^{-1/2}|\xi|^{2}s\nabla_{\xi}\rho(\xi,\eta)\widehat {\mathcal{E}}(\xi-\eta,s)\widehat {\mathcal{E}}(\eta,s) d\eta\bigg) +e^{is \Delta}xf\mathcal{E},
\end{align*}
thus
\begin{align*}
\big\|e^{\pm is\beta (\Delta^{2}-\Delta)^{1/2}}\partial_{s}xh_{1}\big\|_{L^{6}}
	\lesssim \frac{1}{s}s^{\delta}s^{3\delta_{N}}.
\end{align*}
 Second, follows from  initial datas \eqref{1.7} and   the linear dispersive estimate \eqref{2.5}, we have
\begin{align*}
\big\|	e^{is\beta(\Delta^{2}-\Delta)^{1/2}}\wedge^{-1}(\mathcal{E}_{0}(x)+F(t,x))\big\|_{L^{6}}\lesssim\frac{1}{s^{\frac{1}{3}}}s^{\frac{2}{3}\delta_{N}}	s^{\delta}.
\end{align*}
 Final, through the above analysis, we have
\begin{align*}
	\|	\eqref{8.34}\|_{L^{2}}
	&\lesssim \int_{0}^{t}s\big\|e^{is \Delta}\wedge^{-1}f\big\|_{L^{6}}\big\| e^{\mp is\beta (\Delta^{2}-\Delta)^{1/2}}\partial_{s}xh_{1}\big\|_{L^{3}}ds
	\lesssim t^{1-2\alpha-\delta}\|(\mathcal{E},N,M)\|_{X}^{2},		
\end{align*}
where $\frac{11}{3}\delta_{N}\le \delta$.
Finally,   from the fact that $\mathcal{E}N+\mathcal{E}M= e^{it \Delta}\partial_{s}f$, we have
\begin{align*}
	\|	\eqref{8.35}\|_{L^{2}}
	&\lesssim \int_{0}^{t}s\big\|e^{is \Delta}\partial_{s}f\big\|_{L^{6}}\big\| e^{\mp is\beta (\Delta^{2}-\Delta)^{1/2}}  x^2h_{1}\big\|_{L^{3}}ds\\
	&\lesssim \int_{0}^{t}s\big(\|\mathcal{E}\|_{L^{\infty}}\|\|N\|_{L^{\infty}} +\|\mathcal{E}\|_{L^{\infty}}\|M\|_{L^{\infty}}\big)\big\|e^{\mp is\beta(\Delta^{2}-\Delta)^{1/2}} x^2h_{1}(\eta,s)\big\|_{L^{2}}ds\\
	&
	\lesssim t^{1-2\alpha-\delta}\|(\mathcal{E},N,M)\|_{X}^{2}.
\end{align*}
For the estimate of \eqref{8.30}, from H$\ddot{o}$lder{'}s inequality and Sobolev{'}s embedding theorem,  we obtain
\begin{align*}
	\|	\eqref{8.30}\|_{L^{2}}	
	\lesssim \int_{0}^{t}s^2\big\|e^{is \Delta}f\big\|_{L^{6}}\big\| e^{\mp is\beta (\Delta^{2}-\Delta)^{1/2}}  \partial_{s}h_{1}\big\|_{L^{3}}ds
	\lesssim  t^{1-2\alpha-\delta}\|(\mathcal{E},N,M)\|_{X}^{2}.
\end{align*}
\noindent {\bf For the estimate of $B_{1}$ \eqref{8.21}.}  To estimate $B_{2}$ we decompose it further according to  the frequency $\eta$. Let $X$ be a smooth positive radial and compactly supported function which equals 1 on $\left[0,1\right]$ and vanishes on $\left[2,+\infty\right)$, and define $X_{\le k} =X(\frac{\cdot}{K})$ and $X_{\ge K}=1-X_{\le K}$. Let $l$ be a positive number to be determined later, define
\begin{align}
	&B_{2}^{low}( f, h_{2})=\int_{0}^{t}\int_{R^{3}}(is\nabla_{\xi}\rho(\xi,\eta) )^{2}X_{\le s^{-l}}e^{is\rho(\xi,\eta)}\widehat f(\xi-\eta,s)\widehat  h_{2}(\eta,s)d\eta ds,\label{8.36}\\
	&B_{2}^{high}( f, h_{2})=\int_{0}^{t}\int_{R^{3}}(is\nabla_{\xi}\rho(\xi,\eta) )^{2}X_{\ge s^{-l}}e^{is\rho(\xi,\eta)}\widehat f(\xi-\eta,s)\widehat  h_{2}(\eta,s)d\eta ds.\label{8.37}
\end{align}	
\noindent {\bf For the estimate of  $B_{2}^{low}$.}  Applying H$\ddot{o}$lder{'}s  and Bernstein{'s} inequalities \eqref{1.11}, we obtain
\begin{align*}
	\|\eqref{8.36}\|_{L^{2}}
	&\lesssim \int_{0}^{t}(is)^{2}\frac{1}{s^{2l}}\Big\|\int_{R^{3}} \big|e^{-is |\xi-\eta|^2}\widehat f(\xi-\eta,s)P_{\le \log_2s^{-l}}e^{ \pm is\beta|\eta|\sqrt{{|\eta|}^{2}+1}}\widehat h_{2}(\eta,s)\big|d\eta\Big\|_{L^{2}}ds\\
	&\lesssim  \int_{0}^{t}(is)^{2}\frac{1}{s^{2l}}\big\|\mathcal{F}({|e^{is \Delta}f|})\ast \mathcal{F}({|P_{\le \log_2s^{-l}}e^{\mp is\beta (\Delta^{2}-\Delta)^{1/2}}  h_{2}}|)\big\|_{L^{2}}ds\\
	&\lesssim \int_{0}^{t}(is)^{2}\frac{1}{s^{2l}}\|e^{is \Delta}f\|_{L^{6}}\big\| P_{\le \log_2s^{-l}}e^{\mp is\beta (\Delta^{2}-\Delta)^{1/2}}h_{2}\big\|_{L^{3}}ds\\
		&\lesssim \int_{0}^{t}(is)^{2}\frac{1}{s^{l/2}}\frac{1}{s^{2l}}\|e^{is \Delta}f\|_{L^{6}}\big\| P_{\le \log_2s^{-l}}h_{2}\big\|_{L^{2}}ds\\
	&\lesssim \int_{0}^{t}s^{2}\frac{1}{s^{2l}}\frac{1}{s}s^{\delta}\frac{1}{s^{l/2}}2^{11/4-3\delta}ds\|(\mathcal{E},N,M)\|_{X}^{2}\\		
	&\lesssim t^2\frac{1}{t^{11/4}}t^{(1+3l)\delta}\|(\mathcal{E},N,M)\|_{X}^{2}	\\
	&\lesssim t^{1-2\alpha-\delta}\|(\mathcal{E},N,M)\|_{X}^{2}.
\end{align*}
And then we just need to satisfy
\begin{equation*}
	\left\{\!\!
	\begin{array}{lc}
		1+3l<2,
		&\\ 2-\frac{21}{4}l<\frac{3}{2}. &
	\end{array}
	\right.
\end{equation*}
That means $\frac{16}{63}<l<\frac{1}{3}$. We then choose $l=\frac{1}{3}-\frac{1}{30}$.\\
\noindent {\bf For the estimate of $B_{2}^{high}$ in $L_{2}$.}
To estimate the component $B_{2}^{high}$, we use \eqref{8.1}  to integrate by parts for time and frequency. By doing this we obtain again terms of the
form \eqref{8.14}-\eqref{8.18} (or easier ones), or the analogues of \eqref{8.24}, \eqref{8.25} with $h_{2}$ instead
of $h_{1}$, plus the following term:
\begin{align}
\int_{0}^{t}\int_{R^{3}} (\frac{4\sqrt{\eta^{2}+1}}{\beta\eta+\sqrt{\eta^{2}+1}})^2X_{\ge s^{-l}}e^{is\rho(\xi,\eta)}\widehat f(\xi-\eta,s)\nabla_{\eta}^{2}\widehat  h_{2}(\eta,s)d\eta ds.\label{8.38}
\end{align}
For the estimate of \eqref{8.38}, from H$\ddot{o}$lder{'}s inequality,  Sobolev{'}s embedding theorem and Bernstein{'}s inequality \eqref{1.11}, we get
\begin{align*}
	\|\eqref{8.38}\|_{L_{2}}
	&\lesssim \int_{0}^{t}\big\|e^{is \Delta}f\big\|_{L^{6}}\big\| P_{\ge \log_2s^{-l}}e^{\pm is\beta (\Delta^{2}-\Delta)^{1/2}}x^2h_{2}\big\|_{L^{3}}ds\\
	&\lesssim \int_{0}^{t}\big\|e^{is \Delta}f\big\|_{L_{6}}\big\| P_{\ge \log_2s^{-l}} \wedge^{\frac{1}{2}} x^{2}h_{2}\big\|_{L_{2}}ds\\
	&\lesssim \int_{0}^{t}\frac{1}{s}s^{\delta}s^{\frac{l}{2}}\|\wedge x^{2}h_{2}\|_{L_{2}}ds\\
	&\lesssim \int_{0}^{t}\frac{1}{s}s^{\delta}s^{\frac{l}{2}}s^{1-3\alpha}ds\|(\mathcal{E},N,M)\|_{X}^{2}\\	
	&\lesssim t^{\frac{l}{2}+\delta}t^{1-3\alpha}\|(\mathcal{E},N,M)\|_{X}^{2}\\
	&\lesssim t^{1-2\alpha-\delta}\|(\mathcal{E},N,M)\|_{X}^{2}.
\end{align*}
Similarly, we just need to satisfy $1-2\alpha-\delta\ge\frac{l}{2}+\delta+1-3\alpha$, such that
$\alpha\ge \frac{2}{l}+2\delta$, this inequality holds true provided $\delta\le \frac{2}{135}$. From  the estimates of \eqref{8.2}--\eqref{8.38} and lemma \ref{lem8.1},  the proof of Proposition \ref{prop8.1} is completed.$\hfill\Box$\\

This completes the proof of Theorem \ref{1.1}.

\section*{Acknowledgment} The  authors are supported in part by the
National Science Foundation of China(grant 11971166).

\end{document}